\documentclass[12pt,reqno]{amsart}
\usepackage{latexsym}
\usepackage{amsfonts}
\usepackage{amsthm,amssymb,mathtools,amsmath}
\usepackage[dvipsnames]{xcolor}
\usepackage{graphicx}
\usepackage{url}
\usepackage{enumerate}
\usepackage{tikz-cd}
\usepackage[colorlinks]{hyperref}
\hypersetup{
    colorlinks  = true,
    citecolor   = PineGreen,
    linkcolor   = MidnightBlue,
    urlcolor    = PineGreen
}
\usepackage[foot]{amsaddr}
\usepackage{stackrel}
\usepackage{geometry}
\geometry{
 a4paper,
 total={140mm,217mm},
 left=35mm,
 top=40mm,
}
\usepackage{enumitem}
\usepackage{booktabs}
\usepackage{caption}
\usepackage{arydshln}
\usepackage{placeins}
\usepackage{multicol}
\usepackage{comment}
\usepackage{thmtools}
\declaretheorem[parent=section]{theorem}
\declaretheorem[parent=section,numbered=no,name=Theorem]{theorem*}
\declaretheorem[numberlike=theorem, name=Proposition]{proposition}
\declaretheorem[name=Proposition, numbered=no]{proposition*}

\declaretheorem[numberlike=theorem, name=Lemma]{lemma}
\declaretheorem[numberlike=theorem, name=Definition]{definition}

\declaretheorem[parent=section,numbered=no,name=Conjecture]{conjecture*}


\newcommand*{\tmin}{\dot{\otimes}}
\newcommand*{\tmax}{\hat{\otimes}}

\DeclareMathOperator{\id}{id}

\DeclareMathOperator{\tr}{tr}
\newcommand*{\lrbracket}[1]{\left(#1\right)}
\newcommand*{\lrbrace}[1]{\left\lbrace #1 \right\rbrace}
\newcommand*{\lrvert}[1]{\left\lvert #1 \right\rvert}

\newcommand*{\MM}{\mathcal{M}}
\newcommand*{\Acal}{\mathcal{A}}
\newcommand*{\Bcal}{\mathcal{B}}
\newcommand*{\Xcal}{\mathcal{X}}
\newcommand*{\Ycal}{\mathcal{Y}}

\setcounter{tocdepth}{1}

\title{Finite de Finetti for convex bodies and Polynomial Optimization}
\author{Julius A. Zeiss$^{1, *}$}
\email{jzeiss@physik.rwth-aachen.de}
\author{Gereon Kossmann$^{1, *}$}
\email{kossmann@physik.rwth-aachen.de}
\thanks{$^*$ The authors contributed equally to this work.}
\address{$^1$ Institute for Quantum Information, RWTH Aachen University, Aachen, Germany}

\email{}

\author{\\ \\ René Schwonnek$^{2}$}
\author{Martin Plávala$^{2}$}
\address{$^2$ Leibniz Universität Hannover, Hannover, Germany}


\begin{document}

\begin{abstract}
    Leveraging a recently proposed notion of relative entropy in general probabilistic theories (GPT), we prove a finite de Finetti representation theorem for general convex bodies. We apply this result to address a fundamental question in polynomial optimization: the existence of a convergent outer hierarchy for problems with inequality constraints and analytical convergence guarantees. Our strategy generalizes a quantitative monogamy-of-entanglement argument from quantum theory to arbitrary convex bodies, establishing a uniform upper bound on mutual information in multipartite extensions. This leads to a finite de Finetti theorem and, subsequently, a convergent conic hierarchy for a wide class of polynomial optimization problems subject to both equality and inequality constraints. We further provide a constructive rounding scheme that yields certified interior points with controlled approximation error. As an application, we express the optimal GPT value of a two-player non-local game as a polynomial optimization problem, allowing our techniques to produce approximation schemes with finite convergence guarantees.
    
\end{abstract}

\maketitle
\tableofcontents

\section{Introduction}

A broad class of quantum and general probabilistic theory (GPT) problems can be cast as multivariate polynomial optimization problems over convex bodies \cite{Berta2021,zeiss2025approximatingfixedsizequantum,kossmann2025symmetric,plavala2025polarization,Sherali1990,Sherali1992,Lasserre2001}. Concretely, a convex body is a compact convex subset $K$ of a finite dimensional real vector space $V$. For $i\in [m]$, let $K_i$ be convex bodies such that a multivariate polynomial objective function 
\begin{align}
    p\,:\, K_1\times \ldots\times K_m\,\rightarrow\, \mathbb{R}
\end{align}
is optimized subject to collections of $q$ and $s$ multivariate polynomial constraints,
\begin{align}
 f\,:\, K_1\times \ldots K_m \rightarrow \mathbb{R}^{q},\quad g\,:\, K_1\times \ldots K_m \rightarrow \mathbb{R}^{s},  
\end{align}
respectively, resulting in 
\begin{align}\label{eq:multivariate_problem}
\begin{split}
    p^{*}=\min_{x_i\in K_i}\quad & p\lrbracket{\left\{ x_i\right\}_{i=1}^m}\\
    \text{s.t.}\quad & f\lrbracket{\left\{ x_i\right\}_{i=1}^m}=0,\\
    & g\lrbracket{\left\{ x_i\right\}_{i=1}^m}\geq 0.
\end{split}
\end{align}
Following \cite{plavala2025polarization}, and for simplicity, we restrict our attention to the setting of two convex bodies $K_A$ and $K_B$ together with quadratic constraints. Concretely, we consider the following problem: given $K_A$, $K_B$, and affine maps $f\,:\, K_A\times K_B\rightarrow V$ and $g\,:\, K_A\times K_B\rightarrow V$, where $V$ is a vector space, find $x_A\in K_A$ and $x_B\in K_B$ such that $f(x_A, x_B)=0$ and $g(x_A, x_B)\geq 0$. As an example of such optimization problems arising naturally in GPTs, we study the GPT value of two-player non-local games. In GPTs convex bodies arise as state spaces and their elements are referred to as states. The extension of this framework to the multivariate setting \autoref{eq:multivariate_problem}, involving more than two sets or higher-order polynomial constraints, requires certain adaptations of the techniques developed here, in the spirit of \cite{jee2020quasi}. As problems of this type are generally intractable to solve outright (cf., e.g.\ \cite{ioannou2007computationalcomplexityquantumseparability} and references therein), our attention turns to the study of suitable approximation methods.

\subsection{Contributions}
In this work we develop and apply techniques from (quantum) information theory to polynomial optimization problems. Our main contribution is an information-theoretic finite de Finetti representation theorem for convex bodies (cf.\ \autoref{thm:main_theorem}):
\begin{theorem*}[Finite de Finetti for convex bodies, informal]
    Let $x_{AB_1^n}$ be permutation invariant on the $B$-systems relative to system $A$. Then there exists a separable state $\tilde{x}_{AB}$ such that
    \begin{align}
        \left\lVert \tilde{x}_{AB} - x_{AB}\right\rVert\leq \frac{c(A,B)}{\sqrt{n}},
    \end{align}
    where $\lVert \cdot\rVert$ is an appropriate norm and $c(A,B)$ is a constant depending solely on the underlying state spaces $K_A$ and $K_B$.
\end{theorem*}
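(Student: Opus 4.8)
\emph{Proof strategy.} The plan is to run a quantitative monogamy-of-information argument, lifting the monogamy-of-entanglement strategy from quantum theory to arbitrary convex bodies by means of the GPT relative entropy $D(\cdot\,\|\,\cdot)$. For states on a composite body write $I(A:B)_x := D\lrbracket{x_{AB}\,\|\,x_A\otimes x_B}$ for the GPT mutual information, with $x_A\otimes x_B$ the (unambiguous) product state, and let $I(A:B\,|\,C)_x$ denote its conditional version, obtained after measuring the register $C$. Two structural ingredients are needed, both consequences of the properties of $D$ recalled above: first, a uniform a priori bound $I(A:B_1^n)_x\le c_0(K_A)$ with $c_0(K_A)$ depending only on $K_A$ --- this follows since, by data processing against a fixed reference state $\omega_B$ on $K_B$, $I(A:B_1^n)_x\le D\lrbracket{x_{AB_1^n}\,\|\,x_A\otimes\omega_B^{\otimes n}}$, and the right-hand side is controlled by a dimension-type quantity of the fixed finite-dimensional body $K_A$, uniformly over all extensions (here compactness of the bodies enters); second, a chain rule $\sum_{k=1}^{n} I(A:B_k\,|\,B_1^{k-1})_x\le I(A:B_1^n)_x$.

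Combining the two ingredients with the pigeonhole principle produces an index $k^\ast\le n$ with $I(A:B_{k^\ast}\,|\,B_1^{k^\ast-1})_x\le c_0(K_A)/n$. Using permutation invariance on the $B$-systems I would relabel so that $k^\ast=n$, abbreviate $B:=B_n$ and $C:=B_1^{n-1}$, so that $I(A:B\,|\,C)_x\le c_0(K_A)/n$. Unfolding this conditional quantity, a sufficiently fine measurement on $C$ yields an outcome $m$ with probability $p_m$ and post-measurement states $x_{AB}^{(m)}$; by no-signalling the $AB$-marginal is undisturbed, so $\sum_m p_m\,x_{AB}^{(m)}=x_{AB}$, and the chain rule together with data processing gives $\sum_m p_m\, D\lrbracket{x_{AB}^{(m)}\,\|\,x_A^{(m)}\otimes x_B^{(m)}}\lesssim I(A:B\,|\,C)_x$.

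It then remains to pass from relative entropy to norm distance. Applying the GPT Pinsker inequality termwise and Cauchy--Schwarz in $m$ against the weights $p_m$,
\begin{align}
    \sum_m p_m\,\bigl\lVert x_{AB}^{(m)}-x_A^{(m)}\otimes x_B^{(m)}\bigr\rVert\;\le\;\sqrt{2\sum_m p_m\, D\lrbracket{x_{AB}^{(m)}\,\|\,x_A^{(m)}\otimes x_B^{(m)}}}\;\lesssim\;\sqrt{\frac{c_0(K_A)}{n}}.
\end{align}
Setting $\tilde x_{AB}:=\sum_m p_m\, x_A^{(m)}\otimes x_B^{(m)}$, which lies in $K_A\tmin K_B$ and is hence separable, convexity of the norm gives $\bigl\lVert x_{AB}-\tilde x_{AB}\bigr\rVert\le\sum_m p_m\bigl\lVert x_{AB}^{(m)}-x_A^{(m)}\otimes x_B^{(m)}\bigr\rVert\lesssim\sqrt{c_0(K_A)/n}$; absorbing the Pinsker constant and any $K_B$-dependence entering through the measurement on $C$ into $c(A,B)$ yields the claim.

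The step I expect to be the main obstacle is the middle one: justifying, in a general GPT, that the decomposition induced by a measurement on $C$ approximates genuine conditioning closely enough that $\sum_m p_m D\lrbracket{x_{AB}^{(m)}\,\|\,x_A^{(m)}\otimes x_B^{(m)}}$ is bounded by $I(A:B\,|\,C)_x$ up to an error that is itself $O(n^{-1/2})$ or can be absorbed. In quantum theory this is clean because conditioning on a classical register, and the associated data-processing behaviour, are well understood; for convex bodies one must choose the measurement on $C$ with care (informationally complete, or fine relative to $K_B$), quantify the cost of replacing true conditioning by it, and rely on precisely the data-processing and chain-rule properties of the GPT relative entropy introduced above. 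A secondary but genuine point is making the a priori bound uniform over all permutation-invariant extensions, so that $c(A,B)$ truly depends only on $K_A$ and $K_B$ and not on the particular state $x_{AB_1^n}$.
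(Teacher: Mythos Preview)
Your overall architecture --- uniform mutual-information bound, chain rule, pigeonhole, Pinsker, then assemble a separable state --- matches the paper's. But the step you yourself flag as ``the main obstacle'' is a genuine gap, and the paper resolves it differently than you anticipate.

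The problem is the chain rule. You assume $\sum_k I(A:B_k\,|\,B_1^{k-1})_x \le I(A:B_1^n)_x$ for the GPT mutual information, and later that measuring $C$ yields $\sum_m p_m\, D\lrbracket{x_{AB}^{(m)}\,\|\,x_A^{(m)}\otimes x_B^{(m)}} \lesssim I(A:B\,|\,C)_x$. Neither a chain rule nor even a workable definition of conditional mutual information is available for the GPT divergence; the authors say so explicitly in their Discussion (and note they do not even know whether the infimum in their definition of $I(A:B)$ is attained at $x_A$, cf.\ \autoref{eq:questioned_equality}). So the difficulty is not that measurement merely \emph{approximates} true conditioning --- there is no GPT chain rule to approximate.

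The paper's fix is to bypass GPT conditioning entirely. Before any chain rule is invoked, they apply informationally complete measurements $\mathcal{M}_{A\to Y}$ and $\mathcal{M}_{B\to Z}^{\otimes n}$ on \emph{both} sides, landing on classical simplices $Y,Z_1,\dots,Z_n$. The uniform bound $I(A:Z^n)\le c(A)$ (\autoref{prop:uniform_A_upper_bound_mutual_information}) descends by DPI to $I(Y:Z^n)\le c(A)$, and now the \emph{classical} chain rule and classical Pinsker apply without issue. The resulting norm bound lives on the simplex side; the final step lifts it back to $K_A\tmax K_B$ via the injectivity of informationally complete measurements (\autoref{lem:injectivity}), and that lifting is precisely where the $K_B$-dependence of $c(A,B)$ enters.

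A secondary point: your sketch of the uniform bound via $D\lrbracket{x_{AB_1^n}\,\|\,x_A\otimes\omega_B^{\otimes n}}$ for a fixed $\omega_B$ is not uniform in $n$ --- comparing to a product reference on $n$ copies typically costs a factor of $n$. The paper instead bounds $D\lrbracket{x_{AB_1^n}\,\|\,\tau_A\otimes x_{B_1^n}}$ for a fixed $\tau_A\in\operatorname{relint}(K_A)$, using a Choi-tensor and order-unit argument (\autoref{lemma:relEnt-bound}, \autoref{lemma:cA-orderUnitBound}) to obtain a constant depending only on $K_A$.
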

Although de Finetti theorems are well developed in quantum theory (see, e.g., \cite{Fuchs2004,christandl2007one,Brando2017}), to the best of the authors’ knowledge, comparatively little is known for de Finetti theorems formulated at the level of general convex bodies or cones; see \cite{Aubrun_2021,Aubrun_2024,plavala2025polarization}. Absent further assumptions, the available de Finetti theorems in this general setting are qualitative and pertain to the (infinitely) exchangeable regime \cite{Aubrun_2021,Aubrun_2024}, rather than giving quantitative, finite-$n$ approximation bounds. \cite[Thm.\ 2]{Aubrun_2024} provides a rigidity theorem: it certifies finite convergence whenever the state space (or base of the cone) is affinely equivalent to a Cartesian product of finitely many simplices. For general state spaces, however, such a finite-level collapse should not be expected, underscoring the need for quantitative approximation guarantees at finite-$n$. In addition to its practical implications for convex optimization, the theorem yields a conceptual insight: entanglement in GPTs is inherently limited in its shareability, and our de Finetti result gives a precise quantitative expression of this monogamy. From a purely mathematical perspective, the theorem shows that $n$-extendible states in a maximal tensor product space converge to states in a minimal tensor product space. The result is a consequence of several key ideas, which we detail below:
\begin{enumerate}[label=(\Alph*)]
    \item We build on the recently introduced integral representation of relative entropy from \cite{Frenkel_2023} and extend it to general probabilistic theories (GPTs), which enables us to define a notion of mutual information in this setting. While this notion is well understood in quantum theory -- where mutual information is defined via Umegaki’s relative entropy \cite{Wilde2016} -- a canonical definition is not available in the general conic setting.
    \item Within this framework, we prove that the mutual information admits a uniform upper bound under extensions of one subsystem (see \autoref{prop:uniform_A_upper_bound_mutual_information}):
\begin{proposition*}[Conic mutual information bound, informal]
    For any state $x_{AB}$, we have
    \begin{align}
        I(A:B)_{x_{AB}} \leq \lambda_A (1 + \ln \lambda_A),
    \end{align}
    where $\lambda_A \in \mathbb{R}_{>0}$ is a constant depending only on system $A$.
\end{proposition*}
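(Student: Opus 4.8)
My plan is to evaluate $I(A:B)_{x_{AB}}=D(x_{AB}\,\|\,x_A\otimes x_B)$ via the GPT form of Frenkel's integral representation recalled above and to estimate that integral using only the geometry of $K_A$. Writing $\|z\|_+:=\sup\{\langle e,z\rangle:0\le e\le u_{AB}\}$ for the positive-part functional on the composite effect space, the representation reads
\[
 D(x\,\|\,y)=\int_1^\infty\left[\frac1\gamma\,\|x-\gamma y\|_+\ +\ \frac1{\gamma^2}\,\|y-\gamma x\|_+\right]d\gamma .
\]
Because $x_A\otimes x_B$ is a state, $\|x_A\otimes x_B-\gamma x_{AB}\|_+\le\|x_A\otimes x_B\|_+=1$, so the second summand contributes at most $\int_1^\infty\gamma^{-2}\,d\gamma=1$; the entire content of the proposition is to make the first integrand decay in $\gamma$ at a rate set by $A$ alone.

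The geometric quantity I would use is the \emph{domination constant} $\lambda_A$ of $K_A$: the least $\lambda\ge 1$ for which there is a state $\chi_A\in K_A$ with $\lambda\chi_A-z$ in the cone of $V_A$ for every $z\in K_A$ (finite since $K_A$ is compact; equal to $d$ for a classical or quantum $d$-level system). Two elementary facts are needed. First, for any effect $e_A\le u_A$ one has $(e_A\otimes\id_B)(x_{AB})\le x_B$, since $(u_A-e_A)\otimes\id_B$ is a positive map; by scaling, $(e_A\otimes\id_B)(x_{AB})\le\|e_A\|_\infty\,x_B$ for arbitrary $e_A\ge 0$, where $\|e_A\|_\infty:=\sup_{z\in K_A}\langle e_A,z\rangle$. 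Second, $\langle e_A,\chi_A\rangle\ge\|e_A\|_\infty/\lambda_A$ directly from the definition of $\lambda_A$. Testing against product functionals $e_A\otimes e_B$ — which generate the cone dual to the maximal tensor product — and combining these gives the ``monogamy'' domination
\[
 x_{AB}\ \le\ \lambda_A\,\chi_A\otimes x_B \qquad\text{in the order of } V_A\tmax V_B .
\]
This is the convex-geometric substitute for the quantum inequality $\rho_{AB}\le\mathbf 1_A\otimes\rho_B$, and it is the only place the restriction to the maximal tensor product (no entangled effects on $AB$) is used.

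It then remains to bound the first integrand in two ranges. For $1\le\gamma\le\lambda_A$, from $x_{AB}-\gamma x_A\otimes x_B\le x_{AB}\le\lambda_A\,\chi_A\otimes x_B$ and monotonicity of $\|\cdot\|_+$ one gets $\|x_{AB}-\gamma x_A\otimes x_B\|_+\le\lambda_A$. For $\gamma>\lambda_A$ genuine decay is needed, and here I would prove a Cauchy–Schwarz/$\chi^2$-type estimate
\[
 \langle e,x_{AB}\rangle^2\ \le\ \lambda_A^2\,\langle e,x_A\otimes x_B\rangle\qquad (0\le e\le u_{AB}),
\]
whence $\langle e,x_{AB}\rangle-\gamma\langle e,x_A\otimes x_B\rangle\le\lambda_A s-\gamma s^2\le\lambda_A^2/(4\gamma)$ with $s:=\langle e,x_A\otimes x_B\rangle^{1/2}$, i.e.\ $\|x_{AB}-\gamma x_A\otimes x_B\|_+\le\lambda_A^2/\gamma$. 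Integrating, $\int_1^{\lambda_A}\gamma^{-1}\lambda_A\,d\gamma+\int_{\lambda_A}^\infty\gamma^{-1}(\lambda_A^2/\gamma)\,d\gamma=\lambda_A\ln\lambda_A+\lambda_A=\lambda_A(1+\ln\lambda_A)$, and the $O(1)$ from the second summand is absorbed. (The same method, splitting instead at $\gamma=\lambda_A^2$ and also using $\|x_{AB}-\gamma x_A\otimes x_B\|_+\le 1$, in fact yields the sharper $I(A:B)\le 2\ln\lambda_A+O(1)$, but the stated form is all the de Finetti argument requires.)

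I expect the last step to be the main obstacle. The inequality $\langle e,x_{AB}\rangle^2\le\lambda_A^2\langle e,x_A\otimes x_B\rangle$ is elementary classically — write $x_{AB}(i,j)=\sqrt{x_A(i)x_B(j)}\cdot x_{AB}(i,j)/\sqrt{x_A(i)x_B(j)}$, apply Cauchy–Schwarz, and use $x_{AB}(i,j)\le x_B(j)$ together with $\sum_i x_{AB}(i,j)/x_A(i)\le\lambda_A$ — but for a general convex body there is no Radon–Nikodym derivative to manipulate, so this has to be recast as a coordinate-free statement built purely from the order relations and the marginal domination above (essentially a bound $\chi^2(x_{AB}\,\|\,x_A\otimes x_B)\le\lambda_A-1$ for a suitably defined GPT $\chi^2$-divergence, together with $D\le\chi^2$). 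A secondary, more routine matter is to check the calculus of the extended functional $\|\cdot\|_+$ used throughout — monotonicity, the factorization $\|w_A\otimes x_B\|_+=\sup_{0\le e_A\le u_A}\langle e_A,w_A\rangle$, and convergence and nonnegativity of the integral — all of which follow once the decay estimate is available.
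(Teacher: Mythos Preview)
Your proposal has a genuine gap precisely where you flag it: the $\chi^2$/Cauchy--Schwarz estimate $\langle e,x_{AB}\rangle^2\le\lambda_A^2\,\langle e,x_A\otimes x_B\rangle$ is not established, and in a general GPT---with no Radon--Nikodym calculus and no compatible inner product on the cone---there is no evident way to recover it from the order relations you have. As written, the argument for $\gamma>\lambda_A$ is not a proof, and you acknowledge this.

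The gap is, however, self-inflicted. You open with $I(A:B)_{x_{AB}}=D(x_{AB}\,\|\,x_A\otimes x_B)$, but in this paper mutual information is \emph{defined} variationally as $\inf_{y_A\in K_A}D(x_{AB}\,\|\,y_A\otimes x_B)$, and whether the infimum is attained at $y_A=x_A$ is explicitly left open. If you instead choose $y_A=\chi_A$ (your dominating interior state), then the order inequality $x_{AB}\le\lambda_A\,\chi_A\otimes x_B$ that you correctly derive forces $\|x_{AB}-\gamma\,\chi_A\otimes x_B\|_+=0$ for all $\gamma>\lambda_A$, and the bounds you already have on $[1,\lambda_A]$ together with the trivial estimate on the second summand give $I(A:B)\le D(x_{AB}\,\|\,\chi_A\otimes x_B)\le \ln\lambda_A+1$, stronger than required. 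No $\chi^2$ inequality is needed at all.

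The paper's proof also exploits the variational freedom (taking $y_A=\tau_A\in\operatorname{relint}(K_A)$) but proceeds differently: it writes $x_{AB}=(\id_A\otimes\psi_{x_{AB}})(\phi_{\id})$ via the Choi tensor of the partial-evaluation map, rescales $\psi_{x_{AB}}$ by $\lambda_A^{-1}$ to make it trace non-increasing, applies the data-processing inequality for positive trace non-increasing maps to reduce to $D(\phi_{\id}\,\|\,\tau_A\otimes 1_{K_A})$ on a space built from $A$ alone, and bounds that by $1+\ln\lambda_A$ via $\phi_{\id}\le\lambda_A\,\tau_A\otimes 1_{K_A}$. The homogeneity step contributes the prefactor $\lambda_A$, giving $\lambda_A(1+\ln\lambda_A)$. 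Your (corrected) route is more direct and avoids both the Choi/DPI machinery and the extra factor of $\lambda_A$; the paper's route has the conceptual merit of exhibiting the bound as DPI applied to a fixed ``universal'' quantity depending only on $A$. The domination inequality you prove is essentially the same as the paper's $\phi_{\id}\le\lambda_A\,\tau_A\otimes 1_{K_A}$, pushed forward through $\id_A\otimes\psi_{x_{AB}}$.
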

\item As a consequence of the above, we obtain
\begin{align}
    I(A:B_1^n)_{x_{AB_1^n}} \leq c(A),
\end{align}
with a constant $c(A)$ that does not depend on the number of extensions $n \in \mathbb{N}$. Thus, the shared information between $A$ and any other party $B$ is uniformly bounded, establishing a strong monogamy-of-entanglement relation.
\end{enumerate}

It has remained an open problem whether convergent hierarchies for polynomial optimization over convex bodies can accommodate inequality constraints, as neither the DPS nor the polarization hierarchy \cite{plavala2025polarization} provides such guarantees, even asymptotically. We resolve this question for certain \emph{local constraints} (cf.\ \autoref{sec:comparison_DPS_PLG}) by constructing a convergent conic hierarchy incorporating both local equality and inequality constraints (cf. \autoref{thm:convergence_hierarchy}). Crucially, our approach yields explicit, finite-level convergence guarantees, thus overcoming a major limitation of earlier methods. In addition, we provide a certified rounding procedure yielding interior approximations with explicit accuracy bounds (cf. \autoref{thm:inner_bounds}). Our results therefore address shortcomings of prior methods and, at the same time, draw a novel link between information-theoretic de Finetti arguments and the theory of conic optimization. For a summary comparison highlighting how our contributions differ from the polarization hierarchy of \cite{plavala2025polarization}, we refer the reader to \autoref{tab:comparison_PLG_this_work}. We connect our hierarchy to the theory of conic lifts and representability questions: if the underlying convex bodies admit suitable conic lifts, then the hierarchy can be implemented at the lifted cone level (cf.\ \autoref{prop:lifted_conic_program} and \cite{gouveia2025faces}). We conclude by reformulating the problem of determining the GPT value of a two-player non-local game as an instance of our polynomial optimization problems with local constraints.

\renewcommand{\arraystretch}{1.8}
\begin{table}
\centering
\begin{tabular}{p{3.0cm}|p{5.3cm}|p{5.3cm}}
\toprule
\textbf{Aspect} &
\textbf{Polarization hierarchy \cite{plavala2025polarization}} & 
\textbf{This work} \\
\midrule
Convergence guarantees &
For global eq. constraints &
For local eq. and ineq. constraints \\
\hdashline
Convergence rate &
Asymptotic (no finite-level guarantees) &
Quantified finite-level guarantees \\
\hdashline
Rounding / interior points &
Heuristic interior guesses &
Certified interior points with accuracy bounds \\
\bottomrule
\end{tabular}
\caption{Comparison of the polarization hierarchy of \cite{plavala2025polarization} with this work, emphasizing de Finetti-based convergence guarantees under equality and inequality constraints, locality of constraints, finite-level rates, and certified construction of interior (rounded) points. We postpone to future research the development of an extension of our approach to more general constraint families, which may remove the current disparity in scope relative to the polarization hierarchy.}
\label{tab:comparison_PLG_this_work}
\end{table}
\renewcommand{\arraystretch}{1.0}

\subsection{Previous work}\label{sec:previous_work}

A substantial line of work on approximation techniques for multivariate polynomial optimization over convex bodies traces back to Lasserre’s moment hierarchy \cite{Lasserre2001, deKlerk2011, Nie2013}, which is itself based on a multivariate extension of the sum-of-squares decomposition for nonnegative polynomials \cite{nesterov2000squared}. Its non-commutative counterpart, the NPA hierarchy \cite{Pironio2010}, plays a central role in many approximation algorithms in quantum information theory (see e.g. \cite{Tavakoli_2024} and references therein) and there have been a number of extensions (see e.g.\ \cite{Klep_2023, Ligthart2023, Ligthart2023_inflation}). Alternatively, the Sherali-Adams hierarchy \cite{Sherali1990, Sherali1992} provides an approximation scheme that is conceptually quite different from the approaches mentioned above. In certain regimes, connections to the polarization techniques in \cite{Ligthart2023, Ligthart2023_inflation, plavala2025polarization} can be established; we refer the reader to \cite{plavala2025polarization} for a more detailed discussion.\\

In this work, we are particularly interested in approximation methods whose convergence is certified via a de Finetti-type argument.  De Finetti-based methods fundamentally rely on the interaction between exchange symmetries and appropriate monogamy relations. This interaction is captured by de Finetti representation theorems, which assert, in a precise sense, that permutation symmetry gives rise to (approximate) independence. While these methods offer analytical convergence guarantees, a recurring technical challenge -- addressed in works such as \cite{Berta2021, jee2020quasi, kossmann2025symmetric, zeiss2025approximatingfixedsizequantum, plavala2025polarization} -- is to understand how this approach remains compatible when the objects of interest are required to satisfy additional constraints, as is the case for the polynomial optimization problems considered here. The primary methods relevant to our setting are the DPS hierarchy \cite{Doherty_2004} and its conic generalization \cite{Aubrun_2024}, along with the polarization hierarchy for convex bodies developed in \cite{plavala2025polarization}. A comprehensive comparison is deferred to \autoref{sec:comparison_DPS_PLG}; here we merely indicate the principal ways in which our results differ.

The convergence of the DPS hierarchy rests on an underlying de Finetti representation theorem. For quantum systems, this is supplied by the theorem of \cite{christandl2007one}, whereas the extensions applicable to general convex bodies are developed in \cite{Barrett2009, Aubrun_2024}. Crucially, these convergence guarantees hold only in an averaged sense -- they certify approximation of convex mixtures, not of the individual extremal states themselves. As a result, one cannot ensure convergence for problems of the form \autoref{eq:multivariate_problem} when the associated constraints must hold for each variable separately rather than merely on average. In contrast, the polarization hierarchy \cite{plavala2025polarization} remedies this limitation and provides a convergent approximation scheme that enforces the desired constraints individually. However, for general state spaces, convergence is guaranteed only asymptotically, and no analytical approximation guarantees are available at any finite level of the approximation hierarchy.

To our knowledge, the information-theoretic finite de Finetti theorem established here provides the first framework for constructing approximation schemes for multivariate polynomial optimization problems that come with analytical finite-level convergence guarantees and, moreover, ensure constraint compatibility beyond the averaged sense inherent in previous approaches. In addition, we resolve an open problem posed in \cite{plavala2025polarization} by incorporating inequality constraints into the de Finetti-based convergence framework, thereby closing the gap between de Finetti-type methods and the Lasserre and Sherali-Adams hierarchies. Finally, following \cite{kossmann2025symmetric, zeiss2025approximatingfixedsizequantum}, we introduce a rounding scheme that produces certifiably good interior points, with the resulting approximation error explicitly bounded by our de Finetti theorem. In particular, our method enables us to obtain matching upper and lower bounds, thereby sandwiching the original problem value.

The remainder of this manuscript is organized as follows. In \autoref{sec:polynomial_optimization_over_convex_bodies}, we formulate in detail the polynomial optimization problems over convex bodies that we study and introduce a reformulation with a more advantageous mathematical structure, which serves as the basis for our subsequent analysis. We also provide a technically refined comparison with the DPS and polarization hierarchies and discuss the main challenges encountered in deriving our results.
In \autoref{sec:information_theoretic_de_finetti_representation_theorem}, we develop the information-theoretic foundations underlying our main theorem. In particular, we define an appropriate notion of relative entropy on convex bodies and establish associated monogamy relations, which enable us to prove an information-theoretic finite de Finetti representation theorem for general states.
Subsequently, in \autoref{sec:convergent_hierarchy_for_polynomial_optimization}, we apply this de Finetti theorem to establish convergence guarantees for certain hierarchical approximation schemes in polynomial optimization. We further introduce a rounding procedure that certifies high-quality interior approximations, and we show how these schemes can be formulated as convex optimization problems -- thereby identifying conditions under which numerical implementations are feasible.
Finally, we conclude in \autoref{sec:discussion_and_open_problems} with a discussion of our results and several directions for future research. Additional technical details and background material for the reader’s convenience are provided in the supplementary material.

\section{Polynomial optimization over convex bodies}\label{sec:polynomial_optimization_over_convex_bodies}

\subsection{Notation and preliminaries}\label{sec:notation_and_preliminaries}

In this work, let $V=(V, \geq )$ be a finite dimensional ordered vector space over $\mathbb{R}$ with Euclidean topology. A subset $C$ of $V$ is called \emph{convex cone} if it satisfies $sx \in C$ for all $s \in \mathbb{R}_{\geq 0}$, $x \in C$ and $x,y \in C$ implies that $x+y \in C$. Moreover, a cone is called \emph{generating} if it spans $V$ as a vector space, $C$ is called \emph{pointed} if $C \cap (-C) = \{0\}$. A cone $C$ is called \emph{proper}\footnote{Definitions vary; in some sources the generating condition is replaced by requiring the cone to be solid.} provided it is closed, convex, generating and pointed. For any $X\subseteq V$, we denote by $\operatorname{conv}(X)$ the convex hull of $X$, by $\text{cone}(X)$ the smallest cone containing $X$ and by $\operatorname{aff}(X)$ the affine hull of $X$. We denote the \emph{relative interior} of $X$ by $\operatorname{relint}(X)$ and the \emph{interior} by $\operatorname{int}(X)$. If we denote with $V^\star$ the dual space of the vector space (recall that $V$ is finite-dimensional, so its dual is simply the space of all linear maps $V\to \mathbb{R}$), then the \emph{dual cone} $C^\star$ of a cone $C\subseteq V$ is defined as 
\begin{align}
    C^\star \coloneqq \{f \in V^\star \ : \ f(x) \geq 0, \ \forall \ x\in C\}.
\end{align}
For additional background on cones and ordered vector spaces, we refer the reader to \cite[App.\ A]{Pl_vala_2023}.

The central object of interest in this work is a closed, convex and bounded (in some physically motivated topology) subset $K\subseteq V$ called \emph{state space} \cite[Def.\ 2.1]{Pl_vala_2023}. State spaces are \emph{convex bodies}, that is, nonempty, compact \cite[Thm.\ 27.3]{munkres2000topology}, and convex sets $K\subseteq\mathbb{R}^n$ \cite[Sec.\ 3.1]{Hug2020}. By \cite[Def.\ 4.1]{Pl_vala_2023}, classical state space is a simplex $\Delta^n\subseteq\mathbb{R}^{n+1}$, $n\in\mathbb{N}$. Note that the simplex $\Delta^n$ admits a natural cone structure. We next observe that each state space $K$ is associated with a proper cone and ordered vector space in a natural way. Let $A(K,V)$ denote the space of affine functions from $K$ to $V$, i.e.\ for any $f\in A(K,V)$, we have
\begin{align}
    f(\lambda x + y) = \lambda f(x) + (1-\lambda)f(y), \quad x,y\in K, \, \lambda \in [0,1], 
\end{align}
and write $A(K)$ when $V \cong \mathbb{R}$ as a vector space. Furthermore, we denote the proper cone \cite[Lem.\ 3.11]{Pl_vala_2023} of positive affine functions inside $A(K)$ by $A(K)^{+}$, i.e.\
\begin{align}
    A(K)^{+}=\left\{ f\in A(K)\, : \, f(x)\geq 0, \forall x\in K\right\}.
\end{align} 
By \cite[Lem.\ 3.12]{Pl_vala_2023}, $A(K)$ with the natural partial ordering "$\geq$" is an ordered vector space. 
We define the \emph{effect algebra} $E(K)\subseteq A(K)^{+}$ as the space of affine functions $f:K \to [0,1]$. In other words, by \cite[Prop.\ 3.13]{Pl_vala_2023},
\begin{align}
    E(K)= \left\{ f \in A(K)\, : \, 0\leq f \leq 1_K\right\},
\end{align}
where $1_K\in E(K)$ denotes the unit effect, i.e.\ the constant function yielding $1_K(x)=1$ for any $x\in K$. Note that $A(K)=\operatorname{span}_{\mathbb{R}}(E(K))$ and $A(K)^+=\operatorname{cone}(E(K))$ \cite[Prop.\ 3.10]{Pl_vala_2023}. Furthermore, $1_K$ is the (canonical) \emph{order unit} of $A(K)^+$ \cite[Def.\ 3.16, Lem.\ 3.17]{Pl_vala_2023}, i.e.\ 
\begin{align}
    \forall g\in A(K)\, \exists \mu\in\mathbb{R}_+,\,\,:\, g\leq \mu 1_K. 
\end{align}
In other words, $\lrbracket{A(K),\, A(K)^+,\, 1_K}$ is an order unit space \cite[Def.\ 3.27]{Pl_vala_2023}. Let $A(K)^\star$ denote the dual space to $A(K)$ and  $A(K)^{\star+}\coloneqq \lrbracket{A(K)^+}^\star$ the dual cone induced by $A(K)^{+}$. By \cite[Lem.\ 3.18]{Pl_vala_2023}, $A(K)^{\star+}$ is a proper cone, $\lrbracket{A(K)^{\star +}}^{\star}= A(K)^{+}$ \cite[Prop.\ B.11]{Pl_vala_2023}, and \cite[Prop.\ B.4]{Pl_vala_2023} establishes the canonical vector space isomorphism $A(K)^{\star\star}\simeq A(K)$. Consequently, $K$ admits a natural embedding into $A(K)^{\star +}$. In other words, each state $x\in K$ is a positive linear functional on affine functions. By \cite[Lem.\ 3.20]{Pl_vala_2023}, we have up to a natural isomorphism
\begin{align}
    K = \left\{\phi_x \in A(K)^{\star +}\, : \, \langle \phi_x, 1_K \rangle=1\right\}.
\end{align}
See also \autoref{sec:preliminaries_on_GPTs}. The state space arises as the intersection of a proper cone with an affine hyperplane. Importantly, by \cite[Lem.\ 3.34]{Pl_vala_2023}, $K$ is a \emph{base} of the proper cone $A(K)^{\star +}$. Because $A(K)^{\star +}$ generates $A(K)^\star$, \cite[Lem.\ 3.35]{Pl_vala_2023} ensures that for every $\phi\in A(K)^\star$ there exist $x,y \in K$ and $\lambda, \mu\in \mathbb{R}_+$ such that 
\begin{align}
    \phi= \lambda x - \mu y.
\end{align}
Thus, any affine map on the state space $K$ induces a linear map on $A(K)^\star$. Note that $\lrbracket{A(K)^{\star},\, A(K)^{\star +},\, e}$ with $e\in\operatorname{int}\lrbracket{A(K)^{\star +}}$ is an order unit space (see \autoref{prop:interior_points_of_proper_cones_are_order_units}). By \cite[Prop.\ 3.38]{Pl_vala_2023},
\begin{align}
    \lVert f\rVert_{1_K} \coloneqq  \inf \left\lbrace t \in \mathbb{R}_+\,:\, -t 1_K \leq f \leq t 1_K\right\rbrace,\quad f\in A(K),
\end{align}
is an \emph{order-unit norm} and there exists an according dual norm $ \lVert \cdot\rVert_{\star, 1_K}$ on $A(K)^\star$. Moreover, any ordered vector space with an order unit admits such an order-unit norm (cf.\ \cite[Sec.\ 3.6]{Pl_vala_2023}).\\

Let $K_A$, $K_B$ be two state spaces. A \emph{channel} \cite[Def.\ 6.1]{Pl_vala_2023} is an affine map $\Phi:K_A \to K_B$. The set of channels from $K_A$ to $K_B$ is denoted by $\mathcal{C}\lrbracket{K_A, K_B}$, and write $\mathcal{C}(K)$ whenever $K_A=K_B=K$. We denote by $\operatorname{id}_K \in\mathcal{C}(K)$ the identity channel, i.e.\ the channel given as $\operatorname{id}_K(x)=x$ for all $x\in K$. The data processing inequality (DPI) for channels yields
\begin{align}
   \frac{1}{2} \Vert \Phi(x) - \Phi(y) \Vert_{\star, 1_{K_B}} \leq \sup_{f \in E(K)} f(x-y).
\end{align}
An $n$-\emph{outcome measurement} \cite[Def.\ 6.11]{Pl_vala_2023} is a channel $\mathcal{M}\in \mathcal{C}\lrbracket{K, \Delta^n}$. A measurement $\mathcal{M}\in \mathcal{C}\lrbracket{K, \Delta^n}$ is \emph{informationally complete} 
if for $x,y\in K$ with $\mathcal{M}(x) = \mathcal{M}(y)$ we have $x = y$. 
For an informationally complete measurement $\mathcal{M}$, there exists a constant $d(K)>0$ such that
\begin{align}\label{eq:def_distortion} 
   \sup_{f \in E(K)} f(x-y) \leq d(K) \frac{1}{2} \Vert \mathcal{M}(x) - \mathcal{M}(y) \Vert_{\star, 1_{\Delta_n}}.
\end{align}
The existence of such a constant in the conic setting follows from a standard compactness argument \cite{Pl_vala_2023}. By \cite[Prop.\ 6.13]{Pl_vala_2023}, any $n$-outcome measurement $\mathcal{M}\in \mathcal{C}\lrbracket{K, \Delta^n}$ is uniquely defined by effects (i.e.\ affine positive functions)\footnote{For readers familiar with quantum theory, this notion corresponds to the resolution of a complete POVM in terms of POVM elements indexed in their outcomes.} $\left\{f_1,\ldots, f_n\right\}\subseteq E(K)$ such that $\sum_{i=1}^n f_i = 1_K$.\\ 

Throughout, “$\otimes$" denotes the algebraic tensor product of vector spaces, characterized by its universal property. In this work, we will frequently consider two natural tensor products for cones $C_1\subseteq V_1$, $C_2\subseteq V_2$ inside the algebraic tensor product $V_1\otimes V_2$, namely the minimal and maximal tensor products, corresponding to the two extremal constructions. The \emph{minimal tensor product} of $C_1$ and $C_2$ is defined as 
\begin{align}
    C_1\dot{\otimes}C_2 \coloneqq \operatorname{conv}\{x_1\otimes x_2 \ : \ x_1 \in C_1, \ x_2 \in C_2\}.
\end{align}
On the algebraic tensor product $V_1\otimes V_2$, any pair $f_1\in V_1^{\star}$, $f_2\in V_2^{\star}$ defines a linear functional
\begin{align}
    (f_1\otimes f_2)\,:\,&V_1\otimes V_2 \rightarrow \mathbb{R}\\
    &x_1\otimes x_2 \mapsto f_1(x_1)f_2(x_2)
\end{align}
extended linearly. The \emph{maximal tensor product} of $C_1$ and $C_2$ is defined as 
\begin{align}
\begin{split}
    C_1 \hat{\otimes} C_2 &\coloneqq \lrbracket{C^\star_A\tmin C_B^\star}^\star \\
    &=\left\{z \in V_1 \otimes V_2 \ : \ (f_1\otimes f_2)(z)\geq 0 \ \text{for all} \ f_1 \in C_1^\star, \ f_2 \in C_2^\star \right\}.
\end{split}
\end{align}
In other words, it is the largest cone in $V_1\otimes V_2$ on which all functionals $f_1\otimes f_2$ (with $f_1, f_2$ positive) are themselves positive. Formally, if $C$ is any cone in $V_1\otimes V_2$ such that
\begin{align}
    (f_1\otimes f_2)(C)\subseteq [0, \infty),\quad \forall f_1\in C_1^{\star},\, f_2\in C_2^{\star} 
\end{align}
then necessarily $C\subseteq C_1 \tmax C_2$. Furthermore, using associativity, we can iterate the tensor products to obtain the $n$-fold constructions $C^{\tmin n}$ and $C^{\tmax n}$, defined for every proper cone $C$. Note that $K_A\tmin \Delta^n = K_A\tmax \Delta^n$ by \cite[Prop.\ 5.20]{Pl_vala_2023}, i.e., $\Delta^n$ is a \emph{nuclear cone}. Within the framework of \emph{bipartite states} \cite[Def.\ 5.1]{Pl_vala_2023}, elements from $K_A\tmin K_B$ are referred to as \emph{separable states}  while states from $K_A\tmax K_B$ which are not separable are called \emph{entangled} \cite[Def.\ 5.8]{Pl_vala_2023}. By Carathéodory’s theorem \cite[Thm.~3.4]{Pl_vala_2023}, any state $x_{AB}\in K_A\tmin K_B$ admits a finite \emph{product state} decomposition; that is, one can find finitely many $x_A^{\lambda}\otimes x_B^\lambda$ and coefficients $p_{\lambda}\geq 0$ with $\sum_{\lambda} p_\lambda =1$ satisfying
\begin{align}
    x_{AB}=\sum_{\lambda}p_{\lambda} x_A^{\lambda}\otimes x_B^\lambda.
\end{align}
Moreover, according to \cite[Def.\ 5.4]{Pl_vala_2023}, the space $K_A\tmax K_B$ is exactly the state space corresponding to the eﬀect algebra $E(K_A)\tmin E(K_B)$. Let $\tilde{\otimes}$ denote any tensor product, such that $K_A \tmin K_B \subseteq K_A \tilde{\otimes} K_B \subseteq K_A \tmax K_B$. A state from $K_A\tilde{\otimes}K_B^{\tilde{\otimes} n}$ is denoted by $x_{AB_1^n}$. We identify the affine map 
\begin{align}
    \operatorname{tr}_{B}(\cdot)\coloneqq \lrbracket{\operatorname{id}_{K_A}\otimes 1_{K_B}}(\cdot)\in\mathcal{C}\lrbracket{K_A\tilde{\otimes}K_B, K_A}
\end{align}
as the partial trace map or channel \cite[Ex.\ 6.4]{Pl_vala_2023}. While the notion of a \emph{completely positive channel} (cf.\ \cite[Def.\ 6.21, Prop.\ 6.27, Prop.\ 6.28]{Pl_vala_2023} is dependent on the choice of tensor product, by \cite[Prop.\ 6.23]{Pl_vala_2023}, a measurement is completely positive with respect to any tensor product $\tilde{\otimes}$. Concretely, a measurement $\mathcal{M}_B\in\mathcal{C}\lrbracket{K_B, \Delta^n}$ is completely positive with respect to any $K_A\tilde{{\otimes}}K_B \rightarrow K_A\tilde{\otimes} \Delta^n$, i.e.\ $\forall x_{AB}\in K_A\tilde{\otimes} K_B$ we have $\lrbracket{\operatorname{id}_A\otimes \mathcal{M}_B}\lrbracket{x_{AB}}\in K_A\tilde{\otimes}\Delta^n$.

\subsection{Polynomial optimization over convex bodies}\label{subsec:polynomial_optimization_over_convex_bodies}

In this work, we consider two state spaces $K_A\subseteq V_A$ and $K_B\subseteq V_B$ and study the class of optimization problems of the form 
\begin{align}\label{eq:bivariate_problem}
\begin{split}
    p^{*}= \min_{(x_A, x_B)\in K_{A}\times K_{B}} \quad & p\lrbracket{x_A, x_B}\\
    \text{s.t.} \quad & f(x_A, x_B)=0, \\
    &g(x_A,x_B) \geq 0,
\end{split}
\end{align}
where $p\,:\, K_A\times K_B \rightarrow \mathbb{R}$ and $f,g\,:\, K_A\times K_B \rightarrow V$ are affine functions\footnote{As discussed in \autoref{sec:affine_maps}, there are two natural ways to define affinity for bipartite functions. In the formulation \autoref{eq:bivariate_problem}, we adopt the notion of separate affinity, i.e., the function is affine in each argument independently.}. This class of optimization problems includes many well-studied instances, such as those considered in \cite{Berta2021,zeiss2025approximatingfixedsizequantum,kossmann2025symmetric,plavala2025polarization,Sherali1990,Sherali1992,Lasserre2001,ohst2024characterising}.

For any affine map $f:K_A \times K_B \to V$ we can construct a linear map $\hat{F}_{\hookrightarrow}:K_A \tmin K_B \to V$, as sketched in \autoref{fig:affine_maps_to_linear_maps} and rigorously worked out in \autoref{sec:affine_maps}. We thus obtain an equivalent formulation of the optimization problem in \autoref{eq:bivariate_problem}, wherein the affine maps $p$, $f$, and $g$ are replaced by their associated linear maps $\hat{P}_{\hookrightarrow}$, $\hat{F}_{\hookrightarrow}$, and $\hat{G}_{\hookrightarrow}$, as described in \autoref{fig:affine_maps_to_linear_maps}:
\begin{equation}\label{eq:general_linear_constraint_problem}
\begin{aligned}
    p^{*}= \min_{x_A\otimes x_B\in K_{A}\tmin K_{B}} \quad & \hat{P}_{\hookrightarrow}\lrbracket{x_A\otimes x_B} \\
    \text{s.t.} \quad & \hat{F}_{\hookrightarrow}(x_A\otimes x_B)= \ 0, \\
    &\hat{G}_{\hookrightarrow}(x_A \otimes x_B) \geq \  0.
\end{aligned}
\end{equation}

\begin{figure}
    \centering
    \begin{tikzpicture}[>=stealth]

  \node (A) at (0,3) {$A(K_A)^\star \times A(K_B)^\star$};
  \node (V) at (4,3) {$V$};
  \node (Z) at (8,3) {$K_A \times K_B$};

  \node (B) at (0,0) {$A(K_A)^\star \otimes A(K_B)^\star$};
  \node (C) at (4,0) {$K_A' \dot{\otimes}K_B'$};

  \draw[->] (A) -- node[above] {$F$} (V);
  \draw[->] (A) -- node[left] {$\otimes$} (B);
  \draw[->] (C) -- node[above] {$\iota$} (B);
  \draw[->] (C) -- node[right] {$\hat{F}_{\hookrightarrow} \coloneqq  \hat{F}\circ \iota $} (V);
  \draw[->] (Z) -- node[above] {$f$} (V);

  \draw[->, dashed] (B) -- node[above] {$\exists ! \ \hat{F} \quad$} (V);

\end{tikzpicture}
\caption{The figure schematically shows how the correspondence between affine maps $f$ and linear maps $\hat{F}_{\hookrightarrow}$ works. We start with an affine map $f$ and build out of it a bilinear map $F$ via an image identification construction. From this we use the universal property of the tensor product to get a unique linear map $\hat{F}$. Using the embedding $\iota$ of $K_A' \dot{\otimes} K_B'$ into its bidual yields a map $\hat{F}_{\hookrightarrow} = \hat{F} \circ \iota$.}\label{fig:affine_maps_to_linear_maps}
\end{figure}
 
\FloatBarrier
\subsection{Challenges and comparison to DPS- and Polarization-hierarchy}\label{sec:comparison_DPS_PLG}
To complement the overview in \autoref{sec:previous_work}, the next subsection develops a more detailed mathematical comparison between the DPS and polarization hierarchies. We focus on the polynomial optimization problems depicted in \autoref{fig:four_problems} and discuss how the approximation schemes of \cite{Aubrun_2024, plavala2025polarization} converge to the respective problems represented there. In the following, let $V$ be a finite-dimensional real vector space, $C\subseteq V$ a proper cone with base $K$ and $n\in\mathbb{N}$.

\begin{figure}
    \centering
\begin{tikzpicture}[
  box/.style={draw, rectangle, rounded corners, inner sep=4pt, align=left}
]

\node[box] (p1) at (0,0) {$
\begin{aligned}
p_1 \coloneqq \  \displaystyle\min_{x_{A}\otimes x_B\in K_{A}{\color{PineGreen}\tmin} K_{B}}
          \ &\hat{P}_{\hookrightarrow}\lrbracket{x_{A}\otimes x_B} \\
 \text{s.t. } \ &\hat{F}_{\hookrightarrow}\lrbracket{x_{A}\otimes x_B}=0,\\
& \hat{G}_{\hookrightarrow}\lrbracket{x_{A}\otimes x_B}\geq 0
\end{aligned}
$};

\node[box] (p2) at (0,-3) {$
\begin{aligned}
p_2 \coloneqq \  \displaystyle\min_{x_{A}\otimes x_{B}\in K_{A}{\color{PineGreen}\tmax} K_{B}}
          \ &\hat{P}_{\hookrightarrow}\lrbracket{x_{A}\otimes x_B}\\
 \text{s.t. } \ & \hat{F}_{\hookrightarrow}\lrbracket{x_{A}\otimes x_B}=0,\\
& \hat{G}_{\hookrightarrow}\lrbracket{x_{A}\otimes x_B}\geq 0
\end{aligned}
$};

\node[box] (p3) at (7,0) {$
\begin{aligned}
p_3 \coloneqq \  \displaystyle\min_{x_{AB}\in K_{A}{\color{PineGreen}\tmin} K_{B}}
       \ &   \hat{P}_{\hookrightarrow}\lrbracket{x_{AB}}\\
 \text{s.t. } \ & \hat{F}_{\hookrightarrow}\lrbracket{x_{AB}}=0,\\
& \hat{G}_{\hookrightarrow}\lrbracket{x_{AB}}\geq 0
\end{aligned}
$};

\node[box] (p4) at (7,-3) {$
\begin{aligned}
p_4 \coloneqq \  \displaystyle\min_{x_{AB}\in K_{A}{\color{PineGreen}\tmax} K_{B}}
        \ &\hat{P}_{\hookrightarrow}\lrbracket{x_{AB}} \\
\text{s.t. } \ & \hat{F}_{\hookrightarrow}\lrbracket{x_{AB}}=0,\\
& \hat{G}_{\hookrightarrow}\lrbracket{x_{AB}}\geq 0
\end{aligned}
$};
\end{tikzpicture}
\caption{We compare the four optimization problems \(p_1, p_2, p_3, p_4\). Because the optimization variable in $p_1$ is restricted to product states, we obtain $p_1=p_2$ immediately. Moreover, every state feasible for $p_1$ is also feasible for $p_3$, whereas the converse does not hold; hence $p_3 \leq p_1$. Concretely, any state feasible for $p_3$ can be expressed as $x_{AB}=\sum_{\lambda}p_{\lambda} x_A^{\lambda}\otimes x_B^{\lambda}$. Hence, the condition $\hat{F}_{\hookrightarrow}\lrbracket{x_{AB}}=0$ implies $\sum_{\lambda}p_{\lambda} \hat{F}_{\hookrightarrow}\lrbracket{x_A^{\lambda}\otimes x_B^{\lambda}}=0$. However, in contrast to the case $p_1$, this does not generally imply that  $\hat{F}_{\hookrightarrow}\lrbracket{x_A^{\lambda}\otimes x_B^{\lambda}}=0$ for all $\lambda$ as in $p_1$. As it turns out, this subtle but essential difference poses a significant challenge for approximation schemes. Since $K_A\tmin K_B\subseteq K_A\tmax K_B$, we also have $p_4\leq p_3$.}
\label{fig:four_problems}
\end{figure}

\subsubsection{The DPS hierarchy for cones}
Our discussion follows the cone-theoretic formulation of the DPS hierarchy in \cite{Aubrun_2024}. For its original quantum-mechanical version, we refer the reader to \cite{Doherty_2004}. The \emph{symmetric group} $S_n$ admits a natural permutation representation on $V^{\otimes n}$ via linear maps
\begin{align}
\begin{split}
    S_{\sigma}\lrbracket{x_1 \otimes \cdots \otimes x_n} = x_{\sigma^{-1}(1)} \otimes \cdots \otimes x_{\sigma^{-1}(n)}, \quad \forall \sigma \in S_n.
\end{split}
\end{align}
Thus, the vector space $V^{\otimes n}$ naturally decomposes into a direct sum of $\mathbb{C}S_n$-modules. Schur-Weyl duality allows us to identify the Weyl module corresponding to the trivial $\mathbb{C}S_n$-module. The \emph{symmetric subspace} $\operatorname{Sym}_n(V)\subseteq V^{\otimes n}$ is the space invariant under this $S_n$-action, i.e.\ 
\begin{align}
    \operatorname{Sym}_n(V) = \left\{x\in V^{\otimes n}\, : \, U_{\sigma}(x)=x,\ \forall \sigma\in S_n\right\}.
\end{align}
Accordingly, the \emph{symmetric projection} is the operator $P_{\operatorname{Sym}_n(V)}\,:\, V^{\otimes n} \rightarrow V^{\otimes n}$ defined as 
\begin{align}
    P_{\operatorname{Sym}_n(V)} = \frac{1}{n!}\sum_{\sigma \in S_n} S_{\sigma}.
\end{align}
See \autoref{sec:symmetry_action_on_convex_bodies} for further discussions on the $S_n$-action on cones and state spaces. Let $V_A$ and $V_B$ be finite-dimensional real vector spaces, and let $C_A\subseteq V_A$ and $C_B\subseteq V_B$ be proper cones. Denote by $K_A$ and $K_B$ bases of the cones $C_A$ and $C_B$, respectively. Let $\phi\in\operatorname{int}\lrbracket{C_B^\star}$. The \emph{cone of $n$-extendibile elements} is given by
\begin{align}
    \operatorname{Ext}_n\lrbracket{C_A, C_B, \phi}=\lrbracket{\id_{V_A}\otimes \gamma_n^{\phi}}\lrbracket{C_A\tmax C_B^{\tmax n}},
\end{align}
where the $n$-th \emph{reduction map} $\gamma_n^{\phi}\,:\, V_B^{\otimes n}\rightarrow V_B$ \cite[Thm.\ 1]{Aubrun_2024} is given by
\begin{align}
\begin{split}
     \gamma_n^{\phi} &= \frac{1}{n}\sum_{j=1}^n \phi^{\otimes (j-1)}\otimes\id_{V_B} \otimes \phi^{\otimes (n-j)} \\
     &= \lrbracket{\id_{V_B}\otimes \phi^{\otimes (n-1)}}\circ P_{\operatorname{Sym}_n\lrbracket{V_B}}.
\end{split}
\end{align}
In other words, $\gamma_n^{\phi}$ symmetrically traces out all but one copy of $V_B$. For any $x_{AB}\in V_A\otimes V_B$, we have $x_{AB}\in \operatorname{Ext}_n\lrbracket{C_A, C_B, \phi}$ if and only if there exists an $n$-\emph{extension} $y_{AB_1^n} \in \lrbracket{\id_{V_A}\otimes P_{\operatorname{Sym}_n(V_B)}}\lrbracket{C_A\tmax C_B^{\tmax n}}$ of $x_{AB}$, i.e.\
\begin{align}
    x_{AB}= \lrbracket{\id_{V_A}\otimes \id_{V_B}\otimes \phi^{\otimes (n-1)}}\lrbracket{y_{AB_1^n}}. 
\end{align}
Since any $n$-extension can be reduced via $\phi$ to an $(n-1)$-extension, we obtain a decreasing sequence of cones and further define
\begin{align}
    \operatorname{Ext}_{\infty}\lrbracket{C_A, C_B, \phi} \coloneqq  \bigcap_{n\geq 1} \operatorname{Ext}_n\lrbracket{C_A, C_B, \phi}.
\end{align}
The notions introduced above readily translate to states. Concretely, consider the subset of $\operatorname{Ext}_n\lrbracket{C_A, C_B, \phi}$ which are states, i.e.\
\begin{align}
    \operatorname{Ext}_n\lrbracket{K_A, K_B} \coloneqq  \operatorname{Ext}_n\lrbracket{C_A, C_B, 1_{K_B}} \cap \lrbracket{1_{K_A}\otimes 1_{K_B}}^{-1}(1).
\end{align} 
Then, by \cite[Lem.\ 6]{plavala2025polarization}, this is a compact subset of $K_A\tmax K_B$. Then, we refer to 
\begin{align}
\begin{split}
 \operatorname{DPS}^{(n)}\coloneqq \, \displaystyle \min_{x_{AB}\in \operatorname{Ext}_n(K_A, K_B)}
          \ &\hat{P}_{\hookrightarrow}\lrbracket{x_{AB}}\\
 \text{s.t. } \ &\hat{F}_{\hookrightarrow}\lrbracket{x_{AB}}=0,\\
& \hat{G}_{\hookrightarrow}\lrbracket{x_{AB}}\geq 0
\end{split}
\end{align}
as the DPS hierarchy. Importantly, we have $ \operatorname{Ext}_1\lrbracket{K_A, K_B} = K_A\tmax K_B$ and, by \cite[Thm.\ 1]{Aubrun_2024}, 
\begin{align}
    \operatorname{Ext}_{\infty}\lrbracket{K_A, K_B}=K_A\tmin K_B.
\end{align}
Thus, $p_4$ corresponds to the first level of the DPS hierarchy, while $p_3$ corresponds to the problem to which the hierarchy converges.

\subsubsection{The Polarization hierarchy}
Although fundamentally grounded in symmetric extensions, the polarization hierarchy introduced in \cite{plavala2025polarization} differs from the DPS hierarchy outlined above in several important aspects. For $n, k\in\mathbb{N}$ satisfying $k\leq n$, the hierarchy relies on a symmetric double extension, leading to outer approximations in terms of $K_A^{\tmax n}\tmax K_B^{\tmax n}\cong \lrbracket{K_A\tmax K_B}^{\tmax n}$. Accordingly, consider the set of symmetric $k$-th extensions of $n$-extendable states of $K$ \cite[Eq.\ 32]{plavala2025polarization}, i.e.
\begin{align}
    \operatorname{Ext}_n(K,k) \coloneqq  \gamma_K^{n, k}\lrbracket{K^{\tmax n}},
\end{align}
with the generalization of the reduction map $\gamma_K^{n, k}\,:\, V^{\otimes n}\rightarrow V^{\otimes k}$ such that $\gamma_K^{n, k} \coloneqq  \id_K^{\otimes k}\otimes 1_{K}^{\otimes (n-k)}\circ P_{\operatorname{Sym}_n\lrbracket{V}}$. This set is a compact subset of $K^{\tmax k}$(\cite[Lem.\ 6]{plavala2025polarization}). Furthermore, define the compact set
\begin{align}\label{eqn:definition_infinity_exchangeable_PLG}
    \operatorname{Ext}_{\infty}(K,k) = \bigcap_{n\geq k} \operatorname{Ext}_n(K,k).
\end{align} 
For any $n\in\mathbb{N}$, let $y_n\in \lrbracket{K_A\tmax K_B}^{\tmax n}$ be an element of a \emph{bicompatible sequence} $\left\{y_n\right\}_{n\in \mathbb{N}}$ \cite[Def.\ 1]{plavala2025polarization}, i.e. $y_{n-1}=\lrbracket{\id_{A_1^{n-1}B_1^{n-1}}\otimes1_{K_{A_n}}\otimes 1_{K_{B_n}}}\lrbracket{y_n}$ and $y_n$ is invariant under individual and independent permutation of the tensor factors of $K_A^{\tmax n}$, $K_B^{\tmax n}$, respectively, i.e.
\begin{align}\label{eqn:independent_symmetry_constraint}
    \lrbracket{S_{\sigma_A}\otimes S_{\sigma_B}}\lrbracket{y_n}=y_n, \quad \forall \sigma_A, \sigma_B\in S_n.
\end{align}
Let $\left\{y_n\right\}_{n\in \mathbb{N}}$ be a bicompatible sequence, then we have
\begin{align}
    y_{n,k}\coloneqq  \gamma_{K_A \tmax K_B}^{n, k}(y_n)\in \operatorname{Ext}_n(K_A \tmax K_B,k),\quad \forall k\leq n,
\end{align}
and $y_n=y_{n,n}$. Note that $y_n\in \operatorname{Ext}_n\lrbracket{K_A\tmax K_B, n}$ implies $\lrbracket{S_{\sigma}\otimes S_{\sigma}}\lrbracket{y_n}$ for any $\sigma\in S_n$, but not the stronger constraint formulated in \autoref{eqn:independent_symmetry_constraint}. By a de Finetti-type argument \cite[Lem.\ 2]{plavala2025polarization}, there exists a probability Borel measure $\mu$ on $K_A\times K_B$ such that an element $y_n$ from a bicompatible sequence can be written as 
\begin{align}
    y_n = \int_{K_A \times K_B} x_A^{\otimes n}\otimes x_B^{\otimes n} d\mu(x_A,x_B).
\end{align}

The polarization hierarchy is given by 
\begin{equation}\label{eqn:Polarization_hierarchy}
    \begin{aligned}
         \operatorname{PLG}^{(n)}= \displaystyle\min_{y_n\in \operatorname{Ext}_n\lrbracket{K_A\tmax K_B, n}} \ &p\lrbracket{y_{n,1}}\\
         \text{s.t. } & \lrbracket{S_{\sigma_A}\otimes S_{\sigma_B}}\lrbracket{y_n}=y_n,\\
         & \prod \lrbracket{\lrbracket{f\otimes f}\lrbracket{\mathcal{S}_2\lrbracket{y_{n, 2}}}}=0,
    \end{aligned}
\end{equation}
where $\mathcal{S}_2$ is the isomorphism between $K_A^{\tmax 2}\tmax K_B^{\tmax 2}$ and $\lrbracket{K_A\tmax K_B}^{\tmax 2}$ and we employed the linear \emph{polarization map}\footnote{Improvements in the practical convergence rate for specific choices of the polarization map $\prod$ are discussed in \cite[Theorem 4]{plavala2025polarization}.} $\Pi\,:\, V\otimes V \rightarrow W$ for some finite dimensional real vector space $W$, such that $\prod \lrbracket{a\otimes a + b\otimes b}=0$ implies $a=b=0$. Importantly, a sequence $(y_n^*)_n$ of optimal points for \autoref{eqn:Polarization_hierarchy} need not be bicompatible, since $y^*_{n}$ and $y_{n-1}^*$ are not necessarily related by a partial trace. Due to compactness, there exists a subsequence $\lrbracket{y^*_{n, 2}}_{n\in\mathbb{N}}$ obtained from $(y_n^*)_n$ via $\gamma_{K_A\tmax K_B}^{n ,2}$ converging to $x_2^*\in \operatorname{Ext}_{\infty}\lrbracket{K_A\tmax K_B, 2}$. Since elements of $(y_n^*)_n$ satisfy the stronger symmetry constraint \autoref{eqn:independent_symmetry_constraint}, linearity and continuity ensure that this property passes to $x_2^*$, which by \autoref{eqn:definition_infinity_exchangeable_PLG} can therefore be identified with a bicompatible sequence $\lrbracket{x_n^*}_{n\in\mathbb{N}}$. The polarization constraint in $\operatorname{PLG}^{(n)}$ is likewise preserved under limits and thus transfers to $\lrbracket{x_n^*}_{n\in\mathbb{N}}$. Then, \cite[Lem.\ 2, Thm.\ 3]{plavala2025polarization} yields
\begin{align}
    \mathcal{S}_2\lrbracket{x_2^*}=\int_{K_A\times K_B}\lrbracket{x_A\otimes x_B}^{\otimes 2}\operatorname{d}\mu_A\lrbracket{x_A, x_B}
\end{align}
with $\hat{F}_{\hookrightarrow}\lrbracket{x_{A}\otimes x_B}=0$ almost everywhere w.r.t.\ $\mu_A$. In summary, \cite[Thm.\ 5]{plavala2025polarization} certifies the asymptotic convergence of the polarization hierarchy to $p_1$, whenever no inequality constraints are being enforced, i.e.\ $\hat{G}_{\hookrightarrow}$ is the zero map.  

\subsubsection{This work}
The polarization hierarchy extends the DPS framework to provide convergence guarantees in settings where DPS alone is insufficient, but this comes at the cost of a double extension and stronger symmetry constraints, resulting in significant computational overhead. In this work, we propose an alternative approach. While our method enjoys several advantages -- most notably analytical finite-level bounds on approximation errors and compatibility with inequality constraints -- the proof technique underlying our construction entails one essential drawback: the constraint maps are required to exhibit a specific structural form. In \autoref{eq:bivariate_problem}, assume the following mapping
\begin{align}
    f : K_A \times K_B \to U_A \oplus U_B, \quad (x_A,x_B) \mapsto f_A(x_A) \oplus f_B(x_B)
\end{align}
where $f_A : K_A \to U_A$ and $f_B : K_B \to U_B$ are affine functions. Let $\mathcal{F}\subseteq K_A\times K_B$ denote the set of states which satisfy $f(x_A, x_B)=0$. We show in detail in \autoref{sec:affine_maps} that there exist \emph{unique} linear functions $F_A$ and $F_B$ corresponding to $f_A$ and $f_B$ such that    
\begin{align}
    \mathcal{F} = \{(x_A,x_B) \in K_A \times K_B \ : \ F_A(x_A) = 0,\ F_B(x_B) = 0\},
\end{align}
and, by \autoref{fig:affine_maps_to_linear_maps}, there exists a linear map $F : K'_A \tmin K'_B \to V$, such that
\begin{align}
    \mathcal{F} = \{(x_A,x_B) \in K_A \times K_B \ : \ F(x_A \otimes x_B) = 0\}.
\end{align}
Thus, in this work, we consider problems of the form
\begin{align}\label{eq:final_optimization_for_linear_constraints}
\begin{split}
    p^{*}= \min_{x_A\otimes x_B\in K_{A}\tmin K_{B}} \quad & P\lrbracket{x_A \otimes x_B}\\
    \text{s.t.} \quad & F_A(x_A) = 0 \quad \text{and} \quad F_B(x_B)=0 \\
    & G_A(x_A)\geq 0 \quad \text{and} \quad G_B(x_B) \geq 0,
\end{split}
\end{align}
where $F_A, F_B, G_A,$ and $G_B$ are linear. Based on a novel information-theoretic de Finetti theorem in \autoref{sec:information_theoretic_de_finetti_representation_theorem}, we provide a converging hierarchy of outer approximations to $p^*$ in \autoref{sec:convergent_hierarchy_for_polynomial_optimization}. Note that our proof of convergence technique also provides finite convergence bound to problems $p_3$ without the restriction to the maps above. In other words, when we require the constraints to only hold "on average", then our methods are compatible with general constraint maps as in $p_3$. See \autoref{sec:Comparison_to_Polarization_Hierarchy} for a detailed analysis of these constraints within the polarization hierarchy. 

\section{Information-theoretic de Finetti representation theorem}\label{sec:information_theoretic_de_finetti_representation_theorem}
In this section we extend techniques from quantum information theory to the GPT framework and prove novel information-theoretic finite de Finetti theorems for arbitrary convex bodies. These results provide a rigorous quantitative characterization of the monogamy-of-entanglement between cones, strengthening the asymptotic relation obtained in \cite{Aubrun_2024}.

\subsection{Relative entropy and mutual information for convex bodies}
A central question in GPTs is the formulation of a suitable divergence, serving as an analogue of the relative entropy in quantum theory. From an operational perspective, one seeks quantities that characterize tasks such as hypothesis testing, mirroring, for example, the role played by the Umegaki relative entropy \cite{Umegaki1962RelativeEntropy} in the quantum Stein’s lemma \cite{Hiai1991}. From a purely mathematical viewpoint, one instead looks for quantities on state space that satisfy structural properties such as the data-processing inequality, positivity, additivity, or joint convexity -- each of which has a significant impact on the analyses in which the divergence is employed. In this work, we use a recently developed integral representation of the quantum relative entropy \cite{Frenkel_2023}, which has the advantage of naturally generalizing to GPTs and of coinciding with the Kullback-Leibler (KL) divergence $D_{\operatorname{KL}}(\cdot \Vert \cdot)$ on simplices.

Consider a state space $K$. For any $x,y\in K$, we define their relative entropy according to \cite{Frenkel_2023} as:
\begin{align}\label{eq:def_relative_entropy}
    D(x\Vert y)\coloneqq \int_{\mathbb{R}} \frac{dt}{\vert t\vert (t-1)^2} \sup_{f\in E(K)} -[(1-t)f(x) + tf(y)].
\end{align}
It is readily seen that the quantity defined in \autoref{eq:def_relative_entropy} obeys the \emph{data processing inequality} (DPI) for channels and, in fact, for all positive maps, as established in \cite[Thm.\ 14]{Frenkel_2023}. In particular, for $\lambda \in \mathbb{R}_{+}$, we have the following linearity property 
\begin{equation}\label{eq:relative_entropy_linear}
\begin{aligned}
    D(\lambda x \Vert \lambda y) &= \int_{\mathbb{R}} \frac{dt}{\vert t\vert (t-1)^2} \sup_{f\in E(K)} -[(1-t)f(\lambda x) + tf(\lambda y)] \\
    &= \lambda \int_{\mathbb{R}} \frac{dt}{\vert t\vert (t-1)^2} \sup_{f\in E(K)} -[(1-t)f(x) + tf(y)] \\
    &= \lambda D(x\Vert y).
\end{aligned}
\end{equation}
Since states in GPTs, much like quantum states, can be viewed as preparation procedures within a statistical theory, the notion of mutual information can be defined in an analogous way. \emph{Mutual information} for probability distributions quantifies how much knowledge about one distribution can be gained from knowing another. In quantum theory, this concept carries over directly to quantum states. There, mutual information can be defined via the Umegaki relative entropy, and it is well known that it also admits the following variational characterization (cf., e.g., \cite{Berta_2011})
\begin{align}\label{eq:formulation_mutual_information_quantum_theory}
    I(A:B)_{\rho_{AB}} = \inf_{\sigma_A \in \mathcal{S}(\mathcal{H}_A)} D(\rho_{AB} \Vert \sigma_A \otimes \rho_B), \quad \rho_{AB} \in \mathcal{S}(\mathcal{H}_A \otimes \mathcal{H}_B),
\end{align}
where $\mathcal{S}(\mathcal{H}_A \otimes \mathcal{H}_B)$ denotes the quantum state space on finite dimensional Hilbert space $\mathcal{H}_A\otimes\mathcal{H}_B$. Since GPTs generally lack an intrinsic notion of entropy \cite{kimura2010distinguishability,kimura2016entropies,takakura2019entropy}, but do admit a relative entropy given by \autoref{eq:def_relative_entropy}, we adopt the concept behind \autoref{eq:formulation_mutual_information_quantum_theory} to our setting. For state spaces $K_A$ and $K_B$ the mutual information w.r.t.\ a state is defined as
\begin{align}\label{eq:def_mutual_information}
    I(A:B)_{x_{AB}} \coloneqq \inf_{y_A \in K_A} D(x_{AB} \Vert y_A \otimes x_B), \quad x_{AB} \in K_A \hat{\otimes} K_B,
\end{align}
and where $x_B$ is the reduced state of $x_{AB}$. The following result follows from \cite[Cor.\ 1]{Jen_ov__2024}.
\begin{proposition}\label{prop:jencova}
    Consider state space $K$ and $\lambda, \mu\in\mathbb{R}_+$. Then, for all $x,y\in K$ obeying\footnote{Order induced by the ambient cone.} $\mu y \leq x \leq \lambda y$, we obtain
    \begin{align}
        D(x\Vert y) = \int_\mu^\lambda \frac{ds}{s} \left[\sup_{f\in E(K)} sf(y) - f(x) \right] + \ln\lambda +1 - \lambda.
    \end{align}
\end{proposition}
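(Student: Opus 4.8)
The statement is, in essence, \cite[Cor.\ 1]{Jen_ov__2024} transported to the GPT setting, and the plan is to make the reduction from the integral representation \autoref{eq:def_relative_entropy} explicit. Abbreviate the integrand by
\[
\psi(t)\coloneqq \sup_{f\in E(K)}-\big[(1-t)f(x)+tf(y)\big],\qquad t\in\mathbb{R},
\]
and introduce the auxiliary functions $\phi(s)\coloneqq \sup_{f\in E(K)}\big[sf(y)-f(x)\big]$ and $\tilde{\phi}(s)\coloneqq \sup_{f\in E(K)}\big[f(x)-sf(y)\big]$, both nonnegative (attained at $f=0$), convex, and monotone in $s$. I would first record three elementary facts. (i) On $t\in[0,1]$ the coefficients $1-t$ and $t$ are nonnegative and $f(x),f(y)\ge 0$ for $f\in E(K)$, so $\psi\equiv 0$ there. (ii) Every $f\in E(K)\subseteq A(K)^{+}$ pairs nonnegatively with the ambient-cone elements $x-\mu y$ and $\lambda y-x$, so the hypothesis $\mu y\le x\le\lambda y$ forces $\mu f(y)\le f(x)\le\lambda f(y)$ for all $f\in E(K)$; hence $\phi\equiv 0$ on $(0,\mu]$ and $\tilde{\phi}\equiv 0$ on $[\lambda,\infty)$. (iii) Since $f\mapsto 1_K-f$ is an involution of $E(K)$, substituting $1_K-f$ into $f(x)-sf(y)$ yields the identity $\tilde{\phi}(s)=\phi(s)+1-s$, valid for every $s$.

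Next I would run the Möbius substitution $s=\frac{t}{t-1}$ separately on the two half-lines where $\psi$ may be nonzero. On $(1,\infty)$ one has $t-1>0$, so the supremum factors through this positive scalar, $\psi(t)=(t-1)\sup_{f\in E(K)}\big[f(x)-\tfrac{t}{t-1}f(y)\big]=(s-1)^{-1}\tilde{\phi}(s)$ with $s\in(1,\infty)$; on $(-\infty,0)$ one has $t-1<0$, the supremum becomes an infimum, and $\psi(t)=(1-t)\phi(s)=(1-s)^{-1}\phi(s)$ with $s\in(0,1)$. In either case $t-1=(s-1)^{-1}$ and $dt=-(s-1)^{-2}\,ds$, and a short computation shows that the singular weight collapses to $\frac{dt}{|t|(t-1)^{2}}=-\frac{s-1}{s}\,ds$ on $(1,\infty)$ and $\frac{s-1}{s}\,ds$ on $(-\infty,0)$. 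Keeping track of the orientation of the substitution on each branch, both contributions come out positive, and adding the vanishing $t\in[0,1]$ piece gives
\[
D(x\Vert y)=\int_{0}^{1}\frac{\phi(s)}{s}\,ds+\int_{1}^{\infty}\frac{\tilde{\phi}(s)}{s}\,ds .
\]

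To finish, I would truncate using (ii): the first integral is supported on $[\mu,1]$ and the second on $[1,\lambda]$. On the latter I apply (iii),
\[
\int_{1}^{\lambda}\frac{\tilde{\phi}(s)}{s}\,ds=\int_{1}^{\lambda}\frac{\phi(s)}{s}\,ds+\int_{1}^{\lambda}\frac{1-s}{s}\,ds=\int_{1}^{\lambda}\frac{\phi(s)}{s}\,ds+\ln\lambda+1-\lambda ,
\]
and gluing the $[\mu,1]$ and $[1,\lambda]$ integrals produces exactly the claimed formula. The work here is bookkeeping rather than conceptual: one must justify the change of variables across the singular points $t\in\{0,1\}$ and across the gluing point $s=1$, note that $\psi$ is continuous (a sup of a jointly continuous family over the compact $E(K)$) and that $D(x\Vert y)$ is finite under the domination hypothesis $\mu y \le x \le \lambda y$ with $\mu>0$ (as in the commutative case), and confirm that pulling the supremum through the scalar $t-1$ is legitimate in both the positive and the negative case. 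The two genuinely GPT-specific inputs are the passage of the order relation to all effects in (ii) and the effect-complement identity $\tilde{\phi}=\phi+1-s$ in (iii); everything else mirrors the simplex computation.
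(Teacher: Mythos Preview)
Your proof is correct and follows essentially the same route as the paper's: split the defining integral over $(-\infty,0)\cup[0,1]\cup(1,\infty)$, use the M\"obius substitution $s=t/(t-1)$ on the two outer pieces, truncate via the domination hypothesis, and convert $\tilde\phi$ to $\phi$ through the complement-effect identity $f\mapsto 1_K-f$. Your bookkeeping with the named auxiliaries $\phi,\tilde\phi$ is slightly tidier than the paper's, but the argument is the same.
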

\begin{proof}
    The proof extends verbatim, up to minor modifications, from \cite[Cor.\ 1]{Jen_ov__2024} to the GPT framework. Details are provided in Appendix~\autoref{sec:Proofs_of_auxiliary_results}.
\end{proof}
A second result that can be directly adapted from the literature is a Pinsker-type inequality, formulated in terms of the divergence defined in \autoref{eq:def_relative_entropy}.
\begin{lemma}\label{lem:pinsker}
    Consider a state space $K$ and $\lambda\in \mathbb{R}_+$. Then, for all $x,y\in K$, we have
    \begin{align}\label{eqn:pinsker_lemma}
        \frac{1}{2}\left(\sup_{f\in E(K)} f(x) - f(y)\right)^2\leq \int_0^\lambda \frac{ds}{s} \left[\sup_{f \in E(K)} f(y)s - f(x)\right] + \ln \lambda + 1 -\lambda.
    \end{align}
\end{lemma}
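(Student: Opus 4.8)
The plan is to reduce the statement to an elementary one‑variable estimate in two moves: first dispose of the parameter $\lambda$ by a monotonicity argument, then replace the supremum over the whole effect algebra $E(K)$ by a test against a single optimal effect and its complement — equivalently, data‑process down to one classical bit.

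Write $R(\lambda)$ for the right‑hand side of \autoref{eqn:pinsker_lemma} and set $g(s):=\sup_{f\in E(K)}\bigl(s\,f(y)-f(x)\bigr)$, so that $R(\lambda)=\int_0^\lambda g(s)\,\frac{ds}{s}+\ln\lambda+1-\lambda$. Since $1_K\in E(K)$ one has $g(s)\ge s-1$, and since the zero effect lies in $E(K)$ and $f(x)\ge0$ one has $0\le g(s)\le s$; in particular the integral converges near $0$, and $R$ is $C^1$ on $(0,\infty)$ with $R'(\lambda)=g(\lambda)/\lambda+1/\lambda-1\ge(\lambda-1)/\lambda+1/\lambda-1=0$. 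Hence $R$ is nondecreasing. (The inequality is to be read for $\lambda\ge1$: for $x=y$ and $\lambda<1$ the left side vanishes while $R(\lambda)=\ln\lambda+1-\lambda<0$.) It therefore suffices to treat $\lambda=1$, i.e.\ to show
\[
\int_0^1 g(s)\,\frac{ds}{s}\ \ge\ \tfrac12\,\delta^2,\qquad \delta:=\sup_{f\in E(K)}\bigl(f(x)-f(y)\bigr)\ge0 .
\]

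We may assume $\delta>0$. Since $E(K)$ is compact and $f\mapsto f(x)-f(y)$ is continuous, fix $f^\ast\in E(K)$ attaining $\delta$ and put $p:=f^\ast(x)$, $q:=f^\ast(y)\in[0,1]$, so $p-q=\delta$. The point is that $f^\ast$ itself only gives the useless bound $g(s)\ge sq-p<0$ on $(0,1)$, so one tests against the complementary effect $1_K-f^\ast\in E(K)$, which yields the pointwise lower bound $g(s)\ge\bigl[s(1-q)-(1-p)\bigr]_+=\bigl[(s-1)(1-q)+\delta\bigr]_+$. (Conceptually this is data processing through the two‑outcome measurement $\{f^\ast,1_K-f^\ast\}$: it sends $x,y$ to Bernoulli distributions with parameters $p,q$; $g$ is monotone under channels because $h\circ M\in E(K)$ for every simplex effect $h$; and $\delta$ is preserved since $f^\ast$ is optimal.) With $c:=1-q\ge\delta>0$ and $c-\delta=1-p$, the bracket is positive precisely for $s>1-\delta/c$, and an explicit integration gives
\[
\int_0^1 g(s)\,\frac{ds}{s}\ \ge\ \int_{1-\delta/c}^{1}\frac{(s-1)c+\delta}{s}\,ds\ =\ c\bigl[u+(1-u)\ln(1-u)\bigr],\qquad u:=\tfrac{\delta}{c}\in(0,1].
\]
The scalar inequality $u+(1-u)\ln(1-u)\ge\tfrac12 u^2$ on $[0,1]$ (equality at $u=0$; the derivative of the difference equals $-\ln(1-u)-u\ge0$) then bounds the right‑hand side below by $\tfrac{c}{2}u^2=\tfrac{\delta^2}{2c}\ge\tfrac12\delta^2$, since $c\le1$. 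Combined with $R(\lambda)\ge R(1)$ this proves \autoref{eqn:pinsker_lemma} for every $\lambda\ge1$.

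The genuinely delicate step is the second one — turning the supremum over the abstract, infinite‑dimensional effect algebra $E(K)$ into something tractable. For a Pinsker‑type \emph{lower} bound it suffices to retain the two effects $f^\ast$ and $1_K-f^\ast$ (i.e.\ to data‑process onto a single bit), after which the problem is classical; this is the ``adapted from the literature'' content. Alternatively one could invoke the Bernoulli Pinsker inequality $D_{\operatorname{KL}}(p\Vert q)\ge 2(p-q)^2$ together with \autoref{prop:jencova} on $\Delta^1$, which identifies $R(\lambda)$ with $D_{\operatorname{KL}}(p\Vert q)$ once $\lambda\ge\max\{p/q,(1-p)/(1-q)\}$, and handle the remaining range $1\le\lambda<\max\{p/q,(1-p)/(1-q)\}$ by the monotonicity of $R$ and the case $\lambda=1$ above. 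The $\lambda$‑reduction and the one‑line calculus fact are routine.
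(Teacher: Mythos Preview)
Your proof is correct and takes a genuinely different route from the paper's. The paper argues by reduction to the classical setting: it picks the optimal effect $f^\ast$, forms the two-outcome measurement $M=(f^\ast,1_K-f^\ast)$, applies the classical Pinsker inequality to the resulting Bernoulli pair, and then invokes DPI for the Frenkel relative entropy together with \autoref{prop:jencova} (at $\mu=0$) to identify the right-hand side with $D(x\Vert y)$. Your argument instead first reduces to $\lambda=1$ via the monotonicity of $R$, then directly lower-bounds $g(s)$ using only the complementary effect $1_K-f^\ast$, integrates explicitly, and finishes with the elementary scalar inequality $u+(1-u)\ln(1-u)\ge u^2/2$. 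Both proofs rest on the same data-processing idea---testing with $f^\ast$ and its complement---but yours is entirely self-contained, needing neither the classical Pinsker inequality as a black box nor \autoref{prop:jencova}; the paper's route, in exchange, makes the information-theoretic content transparent (the right-hand side \emph{is} $D(x\Vert y)$ once $x\le\lambda y$). You also correctly flag that the lemma as stated is false for $\lambda<1$ (take $x=y$); the paper's proof carries the same implicit restriction, since \autoref{prop:jencova} requires $x\le\lambda y$, which for states forces $\lambda\ge1$. A small bonus of your approach is that it establishes the inequality for all $\lambda\ge1$ without any order hypothesis $x\le\lambda y$, whereas the paper's identification $R(\lambda)=D(x\Vert y)$ genuinely needs it.
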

\begin{proof}
    Since $E(K)$ is a closed bounded subset of the finite-dimensional space $A(K)$, it is compact. Furthermore, $f\mapsto f(x)-f(y)$ is continuous, and the supremum on the LHS is attained by some $f^*\in E(K)$. As $E(K)$ is isomorphic to the set of two-outcome measurements \cite[Cor.\ 6.14]{Pl_vala_2023}, we can via \cite[Prop.\ 6.13]{Pl_vala_2023} identify $f^*$ with the pair $(f^*, 1_K-f^*)$ and define the two-outcome measurement $M\,:\, K\rightarrow\Delta_2$ by 
    \begin{align}
        M(z)\coloneqq \lrbracket{f^*(z), 1-f^*(z)}.
    \end{align}
    Let $p\coloneqq M(x)$ and $q=M(y)$ be binary probability distributions. We have
    \begin{align}
        \Vert p-q\Vert_{\star, 1_{\Delta_2}} =\left\lvert f^*(x)-f^*(y)\right\rvert=\sup_{f\in E(K)}\lrbracket{f(x)-f(y)},
    \end{align}
    where the absolute value drops since the supremum is non-negative (cf.\ complement effect trick on states in \cite[Lem.\ 3.15]{Pl_vala_2023}). Therefore, the LHS of \autoref{eqn:pinsker_lemma} is given by $\frac{1}{2}\Vert p-q\Vert^2_{\star, 1_{\Delta_2}}$. On simplices, the divergence in \autoref{eq:def_relative_entropy} coincides with the classical KL divergence, and it satisfies the data-processing inequality (DPI) under channels/positive maps. Thus, 
    \begin{align}
        \frac{1}{2}\Vert p-q\Vert^2_{\star, 1_{\Delta_2}}\leq D_{\operatorname{KL}}(p\Vert q)= D\lrbracket{M(x)\Vert M(y)}\leq D(x\Vert y)
    \end{align}
    where the first inequality corresponds to the classical Pinsker inequality (cf., e.g.\, \cite[Prob.\ 3.18]{Csiszr2011}). Applying \autoref{prop:jencova} with $\mu=0$ concludes the proof.
\end{proof}
We conclude this preliminary discussion of mutual information with several remarks on the definition in \autoref{eq:def_mutual_information}. Although we do not know whether the equality
\begin{align}\label{eq:questioned_equality}
    D(x_{AB} \Vert x_A \otimes x_B) \stackrel{?}{=} \inf_{y_A \in K_A} D(x_{AB} \Vert y_A \otimes x_B), \quad x_{AB} \in K_A \hat{\otimes} K_B
\end{align}
holds in general, the quantity defined in \autoref{eq:def_mutual_information} nevertheless enjoys several useful properties. In particular, it is positive and we have for any local channels $\Phi_{A\to A^\prime}$ or $\Phi_{B \to B^\prime}$ 
\begin{equation}
\begin{aligned}\label{eq:DPI_mutual_information}
   \inf_{y^\prime_A \in K_{A^\prime}} D(\Phi_{A\to A^\prime}\otimes \Phi_{B\to B^\prime}(x_{AB})&\Vert y_A^\prime \otimes \Phi_{B\to B^\prime}(x_{B})) \\
   &\leq \inf_{y_A \in K_A} D(x_{AB}\Vert y_A\otimes x_B),
\end{aligned}
\end{equation}
because by the DPI for the relative entropy \cite[sec. 6]{Frenkel_2023} we have
\begin{equation}
\begin{aligned}
    \inf_{y_A \in K_A} &D(x_{AB}\Vert y_A\otimes x_B) \\
    &\geq \inf_{y_A \in K_A} D(\Phi_{A \to A^\prime}\otimes \Phi_{B\to B^\prime}(x_{AB})\Vert \Phi_{A \to A^\prime}(y_A)\otimes \Phi_{B\to B^\prime}(x_B)) \\
    &\geq \inf_{y_A^\prime \in K_{A^\prime}} D(\Phi_{A \to A^\prime}\otimes \Phi_{B\to B^\prime}(x_{AB})\Vert y_A^\prime \otimes \Phi_{B\to B^\prime}(x_B)).
\end{aligned}
\end{equation}
We may therefore use it as a well-defined notion of mutual information.

\subsection{Monogamy}
The question is whether, in general, there exists an upper bound $c(A)$ independent of $K_B$, depending solely on $K_A$, such that  
\begin{align}\label{eq:uniform_mutual_information_bound}
    I(A:B)_{x_{AB}} \stackrel{?}{\leq} c(A), \quad x_{AB} \in K_A \hat{\otimes} K_B.
\end{align}
The above question admits an affirmative answer, which we establish using the following lemmas.
\begin{lemma} \label{lemma:relEnt-bound}
    Let $x, y \in K$ be such that $x \leq \lambda y$ with $\lambda\in\mathbb{R}_+$. Then, $D(x \Vert y) \leq 1 + \ln\lambda$.
\end{lemma}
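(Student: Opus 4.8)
The plan is to invoke Proposition~\ref{prop:jencova} in the degenerate case $\mu = 0$ and then bound the resulting integral by an elementary argument. First I would check that the hypothesis of Proposition~\ref{prop:jencova} is met with $\mu=0$: since $x\in K$ sits inside the proper cone $A(K)^{\star+}$, we trivially have $0 = 0\cdot y \leq x$, and combining this with the assumed bound $x \leq \lambda y$ gives the chain $0\cdot y \leq x \leq \lambda y$. Proposition~\ref{prop:jencova} then yields
\[
    D(x\Vert y) = \int_0^\lambda \frac{ds}{s}\left[\sup_{f\in E(K)} sf(y) - f(x)\right] + \ln\lambda + 1 - \lambda,
\]
and one notes in passing that the integrand is nonnegative (take $f=0\in E(K)$ in the supremum), so the integral is well defined.

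The key step is a uniform bound on the integrand. For every $f\in E(K)$ and every $s>0$, positivity of effects gives $f(x)\geq 0$, while $f\leq 1_K$ together with $y\in K$ gives $f(y)\leq 1$; hence $sf(y) - f(x) \leq sf(y) \leq s$. Taking the supremum over $f\in E(K)$ shows $\sup_{f\in E(K)}\bigl(sf(y)-f(x)\bigr)\leq s$, so that $\tfrac1s \sup_{f\in E(K)}\bigl(sf(y)-f(x)\bigr)\leq 1$ for all $s\in(0,\lambda]$ (this also confirms integrability across the apparent $s^{-1}$ singularity at the origin). Integrating, $\int_0^\lambda \frac{ds}{s}\bigl[\sup_{f\in E(K)} sf(y)-f(x)\bigr] \leq \int_0^\lambda ds = \lambda$. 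Substituting into the identity above gives $D(x\Vert y) \leq \lambda + \ln\lambda + 1 - \lambda = 1+\ln\lambda$, as claimed.

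I do not expect any real obstacle here: the estimate is immediate once Proposition~\ref{prop:jencova} is available. The only point requiring a moment's attention is the telescoping cancellation of the two $\lambda$-terms — which is exactly what singles out $\mu=0$ as the correct specialization — and the observation that the same crude bound $sf(y)-f(x)\leq s$ simultaneously controls the integral and rules out any divergence at $s\to 0^+$.
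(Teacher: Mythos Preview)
Your proposal is correct and matches the paper's own proof essentially line for line: apply Proposition~\ref{prop:jencova} with $\mu=0$, bound the supremum by $s$ using $f(x)\geq 0$ and $f(y)\leq 1$, integrate to get $\lambda$, and cancel. The only additions on your side are the remarks on nonnegativity of the integrand and integrability at $s\to 0^+$, which are harmless elaborations rather than a different approach.
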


\begin{proof}
    Since $0\leq x \leq \lambda y$, \autoref{prop:jencova} applies with $\mu=0$, giving
    \begin{align}
         D(x \Vert y) = \int_0^\lambda \frac{ds}{s} \left[\sup_{f\in E(K)} sf(y) - f(x) \right] + \ln\lambda +1 - \lambda.
    \end{align}
    For any effect $f\in E(K)$ we have $0\leq f(x)$ and $f(y)\leq 1$. Hence, for every $s\geq 0$,  we have $sf(y)-f(x)\leq s$, and thus,
    \begin{align}
        \sup_{f\in E(K)}\lrbracket{sf(y)-f(x)}\leq s.
    \end{align}
    Therefore, 
    \begin{align}
        \int_0^\lambda \frac{ds}{s} \left[\sup_{f\in E(K)} sf(y) - f(x) \right] \leq \int_0^\lambda \frac{ds}{s} s = \lambda,
    \end{align}
yielding
\begin{align}
    D(x \Vert y) \leq \lambda + \ln\lambda + 1 - \lambda = 1 + \ln(\lambda),
\end{align}
as claimed.
\end{proof}
The following lemma formalizes the intuition that any state\footnote{Because $K$ lies in an affine hyperplane (a codimension-one slice of the cone $A(K)^{\star +}$), its ambient interior $\operatorname{int}(K)$ is typically empty, so the correct notion of “interior state” is the relative interior $\operatorname{relint}(K)$ taken within $\operatorname{aff}(K)$.} $\tau\in\operatorname{relint}(K)$ serves as a uniform “reference” or order unit for $A(K)^{\star +}$: every normalized state $x\in K$ is bounded above by a finite multiple of $\tau$, and the required factor can be chosen uniformly over $K$.
\begin{lemma} \label{lemma:cA-orderUnitBound}
Consider a state space $K$ and $\tau \in \operatorname{relint}(K)$. Then
\begin{align}
    \lambda = \sup_{x \in K} \inf\lrbracket{\{ \lambda :  x \leq \lambda \tau \}} < \infty.
\end{align}
\end{lemma}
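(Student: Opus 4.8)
The plan is to read $g(x) := \inf\{\lambda \ge 0 : x \le \lambda\tau\}$ as a gauge-type functional attached to the base point $\tau$, and to bound it \emph{uniformly} over $K$ using two ingredients: the defining property of $\operatorname{relint}(K)$, and compactness of $K$. Recall first that "$\le$" is the order induced by the ambient proper cone $C := A(K)^{\star +}$, so $x \le \lambda\tau$ means precisely $\lambda\tau - x \in C$. Since $1_K \in \operatorname{int}(C^\star)$ and $K \subseteq C$, every element of $C$ pairs nonnegatively with $1_K$; as $\langle x,1_K\rangle = \langle\tau,1_K\rangle = 1$, membership $\lambda\tau - x \in C$ forces $\lambda \ge 1$. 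Moreover, if $\lambda_0\tau - x \in C$ then for every $\lambda \ge \lambda_0$ we have $\lambda\tau - x = (\lambda_0\tau - x) + (\lambda - \lambda_0)\tau \in C$, using $\tau \in K \subseteq C$ and that $C$ is a convex cone. Because $C$ is closed, $\{\lambda \in \mathbb{R} : x \le \lambda\tau\}$ is therefore a closed half-line $[g(x),\infty)$ with $g(x) \ge 1$, so $g$ is well defined on $K$ and the infimum is attained.

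Next I would exploit $\tau \in \operatorname{relint}(K)$. Let $L := \operatorname{aff}(K) - \tau$ be the linear subspace parallel to the affine hull of $K$ (recall $K$ typically has empty ambient interior, which is why the relative interior is the correct notion). By definition of $\operatorname{relint}(K)$ there is $\varepsilon > 0$ such that $\tau + w \in K$ for every $w \in L$ with $\lVert w\rVert \le \varepsilon$. Fix $x \in K$ with $x \neq \tau$; then $x - \tau \in L$, so $w := -\varepsilon\,(x-\tau)/\lVert x-\tau\rVert$ lies in $L$ with $\lVert w\rVert = \varepsilon$, whence $\tau + w \in K \subseteq C$. Scaling by the nonnegative factor $\lVert x-\tau\rVert/\varepsilon$ and using that $C$ is a cone gives $\tfrac{\lVert x-\tau\rVert}{\varepsilon}\,\tau - (x-\tau) \in C$, i.e.\ $\bigl(1 + \tfrac{\lVert x-\tau\rVert}{\varepsilon}\bigr)\tau - x \in C$. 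Hence $g(x) \le 1 + \lVert x-\tau\rVert/\varepsilon$; this also holds at $x = \tau$, where $g(\tau) = 1$.

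Finally, $K$ is compact, so $D := \sup_{x \in K}\lVert x - \tau\rVert < \infty$, and the previous step yields $\lambda = \sup_{x\in K} g(x) \le 1 + D/\varepsilon < \infty$, as claimed. I do not expect a serious obstacle here; the only points needing care are bookkeeping ones — identifying the relevant ambient space as $\operatorname{aff}(K)$ so that $\operatorname{relint}$ (not $\operatorname{int}$) is used, checking that $\{\lambda : x \le \lambda\tau\}$ is genuinely an upward ray (so $g$ is finite-valued and attained), and extracting a \emph{uniform} bound from compactness rather than a merely pointwise one.
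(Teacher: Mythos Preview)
Your argument is correct and complete. It differs genuinely from the paper's route, so a brief comparison is worthwhile.

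The paper first shows $m_\tau(x)<\infty$ via Rockafellar's line-extension characterization of $\operatorname{relint}$, then proves the auxiliary structural fact that in the decomposition $m_\tau(x)\tau = x + (m_\tau(x)-1)y$ the residual point $y$ must lie on the boundary of $K$, and finally argues by contradiction: if $m_\tau(x_n)\to\infty$ along some sequence, the corresponding boundary points $y_n$ would converge to $\tau$, forcing $\tau$ into the (closed) boundary. You instead use the ball characterization of $\operatorname{relint}$ directly to produce, for each $x$, an explicit majorant $g(x)\le 1+\lVert x-\tau\rVert/\varepsilon$, and then invoke compactness of $K$ once to get the uniform bound $\lambda\le 1+D/\varepsilon$. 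Your route is shorter, avoids the contradiction step, and yields an explicit quantitative estimate in terms of the inradius $\varepsilon$ at $\tau$ and the radius $D$ of $K$ about $\tau$; this is actually in the spirit of the paper's later remark on ``computing the upper bound $c(A,B)$'', where one wants $\lambda_A$ as small as possible. The paper's approach, on the other hand, isolates the boundary-point characterization of the minimizer, which is a mildly interesting structural byproduct even though it is not reused elsewhere.
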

\begin{proof}
    Let $\tau \in \operatorname{relint}(K)$ and for all $x\in C$, define
    \begin{align}
        m_{\tau}(x) \coloneqq  \inf\lrbracket{\{ \lambda :  x \leq \lambda \tau \}}.
    \end{align}
    We will first of all show that $m_{\tau}(x) < \infty$. According to \cite[Thm.\ 6.4]{rockafellar1997convex} there exists $\mu > 1$ such that $(1-\mu)x + \mu \tau \in K$. So denote $(1-\mu)x + \mu \tau \coloneqq y \in K$, then we have $\tau = \frac{\mu - 1}{\mu} x + \frac{1}{\mu} y$, hence $\tau \geq \frac{\mu - 1}{\mu} x$ and $\frac{\mu}{\mu-1} \tau \geq x$, so $\frac{\mu}{\mu-1} \geq m_{\tau}(x)$.

    Let $x \in K$, then we have $m_{\tau}(x) \tau \geq x$ and so there is $y \in K$ such that
    \begin{align}\label{eq:m_tau_x-tau-x-y}
        m_{\tau}(x) \tau = x + (m_{\tau}(x) - 1) y.
    \end{align}
    We will now argue that $y \notin \operatorname{relint}(K)$ and so $y$ belongs to the boundary of $K$. The proof follows by contradiction: assume that $y \in \operatorname{relint}(K)$, then, again by \cite[Thm.\ 6.4]{rockafellar1997convex} there is $\mu > 1$ such that $(1-\mu) x + \mu y \coloneqq y' \in K$. But then $y = \frac{\mu - 1}{\mu} x + \frac{1}{\mu} y'$, plugging this into \autoref{eq:m_tau_x-tau-x-y} we get
    \begin{align}
    \begin{split}
        m_{\tau}(x) \tau &= x + (m_{\tau}(x) - 1) \lrbracket{\frac{\mu - 1}{\mu} x + \frac{1}{\mu} y'} \\
        &\geq \lrbracket{1 + (m_{\tau}(x) - 1) \frac{\mu - 1}{\mu}} x
    \end{split}
    \end{align}
    but then
    \begin{align}
        \dfrac{m_{\tau}(x)}{1 + (m_{\tau}(x) - 1) \frac{\mu - 1}{\mu}} \tau \geq x
    \end{align}
    Since $1 + (m_{\tau}(x) - 1) \frac{\mu - 1}{\mu} > 1$, this is a contradiction with the infimum in definition of $m_\tau(x)$.

    Finally, we will show that there must be a uniform upper bound on $m_\tau(x)$ by showing that the values cannot increase arbitrarily. So assume that there is a sequence of $\{x_n\}_{n=1}^\infty \subset K$ such that $\lim_{n\to \infty} m_\tau(x_n) = \infty$. But then $\tau = \frac{1}{m_{\tau}(x)} x + \frac{m_{\tau}(x) - 1}{m_{\tau}(x)} y_n$ for some $y_n \in K$ and we get that $\lim_{n \to \infty} y_n = \tau$. But since the boundary of $K$ is a closed set, it follows that $\tau$ belongs to the boundary of $K$, which is a contradiction with $\tau \in \operatorname{relint}(K)$.
\end{proof}
The following proposition is central to the proof of our main theorem in \autoref{thm:main_theorem}, a finite finite de Finetti theorem for convex bodies, and provides an affirmative answer to the question raised in \autoref{eq:uniform_mutual_information_bound}.
\begin{proposition}\label{prop:uniform_A_upper_bound_mutual_information}
   Consider state spaces $K_A$ and $K_B$. For $x_{AB} \in K_A \hat{\otimes} K_B$ we have
    \begin{align}
        I(A:B)_{x_{AB}} \leq \lambda_A (1 + \ln \lambda_A),
    \end{align}
    where $\lambda_A \in \mathbb{R}_{+}$ is a constant depending solely on $K_A$.
\end{proposition}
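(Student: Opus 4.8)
The plan is to evaluate the infimum in \autoref{eq:def_mutual_information} at one cleverly chosen competitor and then estimate the resulting relative entropy via \autoref{lemma:relEnt-bound}. Fix once and for all a point $\tau_A\in\operatorname{relint}(K_A)$ (nonempty since $K_A$ is a convex body) and set $\lambda_A:=\sup_{x\in K_A}\inf\{\lambda\in\mathbb{R}_+ : x\le\lambda\tau_A\}$, which is finite by \autoref{lemma:cA-orderUnitBound} and depends only on $K_A$; moreover $\lambda_A\ge1$ because $1_{K_A}(x)=1$ for every $x\in K_A$. Since $A(K_A)^{\star+}$ is closed, each of these infima is attained, so $x\le\lambda_A\tau_A$ holds for \emph{all} $x\in K_A$; equivalently, by duality, $\lambda_A f_A(\tau_A)\,1_{K_A}-f_A\in A(K_A)^{+}$ for every $f_A\in A(K_A)^{+}$.

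The core of the argument is to lift this one-system order bound to the bipartite cone $C_A\tmax C_B$. Let $x_{AB}\in K_A\tmax K_B$ with reduced state $x_B:=\operatorname{tr}_A(x_{AB})$, and take arbitrary positive functionals $f_A\in A(K_A)^{+}$ and $f_B\in A(K_B)^{+}$. Writing $g_A:=\lambda_A f_A(\tau_A)\,1_{K_A}-f_A\in A(K_A)^{+}$ and using the reduction identity $f_B(x_B)=(1_{K_A}\otimes f_B)(x_{AB})$, one computes $(f_A\otimes f_B)\lrbracket{\lambda_A\,\tau_A\otimes x_B-x_{AB}}=(g_A\otimes f_B)(x_{AB})$. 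Now $g_A\otimes f_B\in A(K_A)^{+}\tmin A(K_B)^{+}$, and since $x_{AB}\in K_A\tmax K_B\subseteq C_A\tmax C_B=\lrbracket{A(K_A)^{+}\tmin A(K_B)^{+}}^{\star}$, this pairing is nonnegative. As $f_A,f_B$ were arbitrary, $\lambda_A\,\tau_A\otimes x_B-x_{AB}\in C_A\tmax C_B$, i.e.\ $x_{AB}\le\lambda_A\,(\tau_A\otimes x_B)$ in the ordered space underlying $K_A\tmax K_B$.

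Finally I would invoke \autoref{lemma:relEnt-bound} for the state space $K_A\tmax K_B$ with $x=x_{AB}$, $y=\tau_A\otimes x_B$ and $\lambda=\lambda_A$, which gives $D(x_{AB}\Vert\tau_A\otimes x_B)\le1+\ln\lambda_A$. Since $\tau_A\in K_A$ is feasible in the infimum defining the mutual information and $\lambda_A\ge1$, this yields $I(A:B)_{x_{AB}}\le D(x_{AB}\Vert\tau_A\otimes x_B)\le1+\ln\lambda_A\le\lambda_A(1+\ln\lambda_A)$, as claimed.

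The only subtle step is the transport in the second paragraph: one must resist treating $K_A\tmax K_B$ as if it admitted all positive bipartite functionals, and instead exploit that the maximal tensor cone is by construction dual to the minimal tensor product of the dual cones, so that the \emph{product} witness $g_A\otimes f_B$ — manufactured from the one-system bound $x\le\lambda_A\tau_A$ — already certifies the bipartite inequality. Everything else (nonemptiness of $\operatorname{relint}(K_A)$, finiteness and attainment of $\lambda_A$ via \autoref{lemma:cA-orderUnitBound}, the identity $f_B(x_B)=(1_{K_A}\otimes f_B)(x_{AB})$, and finiteness of $D$ once the domination holds) is routine given the results cited above.
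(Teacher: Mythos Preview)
Your proof is correct and, interestingly, takes a more direct route than the paper while yielding a sharper constant. The paper proceeds via the Choi tensor: it writes $x_{AB}=(\id_A\otimes\psi_{x_{AB}})(\phi_{\id})$, rescales $\psi_{x_{AB}}$ by $\lambda_A^{-1}$ to obtain a trace non-increasing map $\Lambda$, invokes homogeneity $D(\lambda x\Vert\lambda y)=\lambda D(x\Vert y)$, and then applies the DPI for positive trace non-increasing maps to reduce to $D(\phi_{\id}\Vert\tau_A\otimes 1_{K_A})\leq 1+\ln\lambda_A$. The homogeneity step is what produces the extra prefactor $\lambda_A$ in the paper's bound $\lambda_A(1+\ln\lambda_A)$.

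By contrast, you bypass the Choi machinery entirely and establish the order inequality $x_{AB}\leq\lambda_A\,(\tau_A\otimes x_B)$ directly in the cone $C_A\tmax C_B$, using only the dual characterization of the maximal tensor product and the one-system bound $x\leq\lambda_A\tau_A$. Applying \autoref{lemma:relEnt-bound} then gives $D(x_{AB}\Vert\tau_A\otimes x_B)\leq 1+\ln\lambda_A$, which is strictly stronger than the paper's bound whenever $\lambda_A>1$. Your observation that the witness $g_A\otimes f_B$ lies in $A(K_A)^+\tmin A(K_B)^+$ and therefore pairs nonnegatively with any element of $K_A\tmax K_B$ is exactly the right mechanism, and it avoids both the DPI and the normalization gymnastics of the Choi argument. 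The only cosmetic point is that your final inequality $1+\ln\lambda_A\leq\lambda_A(1+\ln\lambda_A)$ is used merely to match the stated form; you have in fact proved the tighter estimate $I(A:B)_{x_{AB}}\leq 1+\ln\lambda_A$.
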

\begin{proof}
    Let $C_A$, $C_B$ be proper cones with base $K_A$, $K_B$, respectively. Furthermore, let $x_{AB}\in K_A\tmax K_B$. Fix $\tau_A\in \operatorname{relint}(K_A)$ and let $\lambda_A<\infty$ be as in \autoref{lemma:cA-orderUnitBound}, so in particular $x_A\leq \lambda \tau_A$. Define the dual base
    \begin{align}
        K_{\tau_A}^\star \coloneqq \{f_A \in C_A^\star \ : \ f_A(\tau_A) = 1\}.
    \end{align}
    Let $\phi_{\operatorname{id}}\in C_A\tmax C^{\star}_A$ be the canonical (Choi) tensor corresponding to the identity pairing, i.e.\ for all $f_A\in C_A^{\star}$ and $x\in C_A\simeq C_A^{\star\star}$,
    \begin{align}\label{eqn:Choi_identity}
        \lrbracket{f_A\otimes x}\lrbracket{\phi_{\operatorname{id}}}= f_A(x).
    \end{align}
    For $x_{AB}$, define the positive map $\psi_{x_{AB}}\,:\, C_A^{\star}\rightarrow C_B$ by 
    \begin{align}\label{eqn:general_Choi}
        g_B\lrbracket{\psi_{x_{AB}}\lrbracket{f_A}}\coloneqq \lrbracket{f_A\otimes g_B}\lrbracket{x_{AB}}, \quad f_A\in C_A^\star,\, g_B\in C_B^{\star}.
    \end{align}
    This is well-defined (cf.\ \cite[Lem.\ A.1]{Jenov2018} and \cite[Lem.\ 3.3]{achenbach2025factorizationmultimetersunifiedview}) and positive because $x_{AB} \in K_A \tmax K_B$ implies $\lrbracket{f_A\otimes g_B}\lrbracket{x_{AB}}\geq 0$ for $f_A,\, g_B\geq 0$. We have
    \begin{align}
        x_{AB}=\lrbracket{\operatorname{id}_{A}\otimes \psi_{x_{AB}}} \lrbracket{\phi_{\operatorname{id}}},
    \end{align}
    which follows from
    \begin{align}
        (f_A \otimes g_B)((\id_A \otimes \psi_{x_{AB}})(\phi_{\id})) = g_B (\psi_{x_{AB}}(f_A)).
    \end{align}
    For $f_A\in K_{\tau_A}^\star$,
    \begin{align}
        1_{K_B}\lrbracket{\psi_{x_{AB}}\lrbracket{f_{A}}}=\lrbracket{f_A\otimes 1_{K_B}}\lrbracket{x_{AB}} = f_A\lrbracket{x_A}\leq \lambda_A f_A\lrbracket{\tau_A}=\lambda_A.
    \end{align}
    Hence $\Lambda\coloneqq \lambda_A^{-1}\psi_{x_{AB}}$ is trace non-increasing on $K_{\tau_A}^\star$, i.e.\ $1_{K_B}\lrbracket{\Lambda\lrbracket{f_A}}\leq 1$. By homogeneity we then get $1_{K_B}\lrbracket{\Lambda\lrbracket{f_A}}\leq f_A(\tau_A)$ for all $f_A\geq 0$. Now, using the definition of mutual information and choosing $y_A=\tau_A$,
    \begin{align}
        I\lrbracket{A:B}_{x_{AB}}=\inf_{y_a\in K_A} D\lrbracket{x_{AB} \Vert y_A\otimes x_B}\leq D\lrbracket{x_{AB}\Vert \tau_A\otimes x_B}.
    \end{align}
    Using homogeneity $D\lrbracket{\lambda x \Vert \lambda y} = \lambda D(x \Vert y)$ and $x_{AB}=\lambda_A\lrbracket{\operatorname{id}\otimes \Lambda}\lrbracket{\phi_{\operatorname{id}}}$, we get 
    \begin{align}
        D\lrbracket{x_{AB}\Vert \tau_A\otimes x_B}=\lambda_A D\lrbracket{\lrbracket{\operatorname{id}\otimes \Lambda}\lrbracket{\phi_{\operatorname{id}}} \Vert \tau_A \otimes \Lambda\lrbracket{1_{K_A}}}.
    \end{align}
    By DPI for positive trace non-increasing maps from \cite[Thm.\ 14]{Frenkel_2023},
    \begin{align}
        D\lrbracket{\lrbracket{\operatorname{id}\otimes \Lambda}\lrbracket{\phi_{\operatorname{id}}} \Vert \lrbracket{\operatorname{id}\otimes \Lambda}\lrbracket{\tau_A\otimes 1_{K_A}}}\leq D\lrbracket{\phi_{\operatorname{id}}\Vert \tau_A \otimes 1_{K_A}}.
    \end{align}
    Finally, $\phi_{\operatorname{id}}\leq \lambda_A\tau_A\otimes 1_{K_A}$: indeed, for $f_A\geq 0$ and $x' \geq 0$, write $x' = 1_{K_A}(x')\Bar{x}'$ with $\Bar{x}' \in K_A$ (if $1_{K_A}(x')=0$ then $x'=0$), so
    \begin{align}
    \begin{split}
         \lrbracket{f_A\otimes x'}\lrbracket{\phi_{\operatorname{id}}}&=f_A(x')=1_{K_A}(x')f_A(\Bar{x}')\\
         &\leq 1_{K_A}(x')\lambda_A f_A\lrbracket{\tau_A}=\lrbracket{f_A\otimes x'}\lrbracket{\lambda_A\tau_A\otimes 1_{K_A}}.
    \end{split}
    \end{align}
    Applying \autoref{lemma:relEnt-bound} gives $D(\phi_{\operatorname{id}}\Vert \tau \otimes 1_{K_A})\leq 1+\ln \lambda_A$. Combining the above inequalities yields
    \begin{align}
        I(A:B)_{x_{AB}} \leq \lambda_A\lrbracket{1+\ln \lambda_A},
    \end{align}
    as claimed
\end{proof}
\autoref{prop:uniform_A_upper_bound_mutual_information} shows that the mutual information is uniformly bounded above by a constant that depends only on the construction of the cone $A(K_A)^{\star +}$, namely the parameter $\lambda_A$	introduced in \autoref{lemma:cA-orderUnitBound}. As we shall see below, the relations derived here can be leveraged to obtain a rigorous quantitative characterization of the monogamy-of-entanglement between cones.

\subsection{de Finetti theorem}

Broadly speaking, a finite de Finetti theorem asserts that, for a permutation-invariant state on the maximal tensor product of many identical state spaces, any reduced state is close  --  in an appropriate sense  --  to an element of the minimal tensor product. Such results provide a fundamental structural characterization of state spaces and, more generally, of cones, and consequently admit a wide range of applications. Accordingly, numerous versions of finite de Finetti theorems have appeared in the literature.
Our approach follows a proof strategy adapted from \cite{Brando2017}, which introduces a parent quantity that bounds the total amount of information shared across the entire $n$-fold tensor-power state. If this quantity is finite and quantifies information that is uniformly shared among all subsystems, one can show that it vanishes asymptotically as the number of subsystems $n\rightarrow\infty$. Notably, mutual information provides precisely such a measure of correlation. In particular, the availability of a chain rule (at least on simplices), together with the validity of the data-processing inequality on arbitrary cones, yields, in principle, a de Finetti-type argument. 
For the purposes of this argument, we recall in the following lemma a well-known fact concerning informationally complete measurements.

\begin{lemma}\label{lem:injectivity}
    Let $T_A:V_A\to W_A$ be an injective linear map between finite dimensional vector spaces. Then, for any finite dimensional vector space $V_B$, the map
    \begin{align}
        T_A\otimes \operatorname{id}_B:V_A \otimes V_B \to W_A \otimes V_B
    \end{align}
    is injective. In particular, for any choice of norms $\Vert \cdot \Vert_{\operatorname{in}}$ on $V_A\otimes V_B$ and $\Vert\cdot \Vert_{\operatorname{out}}$ on $W_A \otimes V_B$, there exists a constant $\tilde{f}(A,B,T_A)\in\mathbb{R}_+$, depending only on $V_A$, $V_B$ and $T_A$, such that 
    \begin{align}
        \Vert T_A \otimes \operatorname{id}_B(X_{AB})\Vert_{\operatorname{out}} \geq \tilde{f}(A,B,T_A) \Vert X_{AB}\Vert_{\operatorname{in}}, \quad X_{AB} \in V_A\otimes V_B.
    \end{align}
\end{lemma}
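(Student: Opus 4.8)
The plan is to reduce everything to the existence of a left inverse of $T_A$ together with a standard compactness argument, so the lemma is essentially routine. First I would observe that since $T_A$ is injective and $V_A$ is finite-dimensional, $T_A$ admits a \emph{left inverse} $S_A:W_A\to V_A$, i.e.\ a linear map with $S_A\circ T_A=\operatorname{id}_{V_A}$ (extend a basis of the subspace $T_A(V_A)$ to a basis of $W_A$ and define $S_A$ to be the associated left inverse). Tensoring by $\operatorname{id}_B$ is functorial, hence $(S_A\otimes\operatorname{id}_B)\circ(T_A\otimes\operatorname{id}_B)=(S_A\circ T_A)\otimes\operatorname{id}_B=\operatorname{id}_{V_A\otimes V_B}$. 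Thus $T_A\otimes\operatorname{id}_B$ possesses a left inverse and is in particular injective, which proves the first assertion.

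For the quantitative claim I would fix the norms $\Vert\cdot\Vert_{\operatorname{in}}$ and $\Vert\cdot\Vert_{\operatorname{out}}$ and set $\Phi:=T_A\otimes\operatorname{id}_B$. Being linear between finite-dimensional spaces, $\Phi$ is continuous, so $X\mapsto\Vert\Phi(X)\Vert_{\operatorname{out}}$ is continuous on $V_A\otimes V_B$; by the injectivity just established it is strictly positive on the unit sphere $\mathcal{S}:=\{X\in V_A\otimes V_B:\Vert X\Vert_{\operatorname{in}}=1\}$, which is compact since $V_A\otimes V_B$ is finite-dimensional. Hence $\tilde f(A,B,T_A):=\min_{X\in\mathcal{S}}\Vert\Phi(X)\Vert_{\operatorname{out}}>0$ is attained and positive, and for any $X\neq0$ homogeneity gives $\Vert\Phi(X)\Vert_{\operatorname{out}}=\Vert X\Vert_{\operatorname{in}}\,\Vert\Phi(X/\Vert X\Vert_{\operatorname{in}})\Vert_{\operatorname{out}}\ge\tilde f(A,B,T_A)\,\Vert X\Vert_{\operatorname{in}}$; the case $X=0$ is trivial. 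A slightly more explicit alternative is to take $\tilde f(A,B,T_A)=\Vert S_A\otimes\operatorname{id}_B\Vert^{-1}$ (operator norm from $\Vert\cdot\Vert_{\operatorname{out}}$ to $\Vert\cdot\Vert_{\operatorname{in}}$), since $\Vert X\Vert_{\operatorname{in}}=\Vert(S_A\otimes\operatorname{id}_B)\Phi(X)\Vert_{\operatorname{in}}\le\Vert S_A\otimes\operatorname{id}_B\Vert\,\Vert\Phi(X)\Vert_{\operatorname{out}}$.

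There is no genuine obstacle here; the only point that deserves care is the bookkeeping of what the constant is allowed to depend on. The argument produces a constant determined by $T_A$ and the two chosen norms — equivalently, by $V_A$, $V_B$, $T_A$ once those norms are fixed, as in the statement — and, crucially, independent of the element $X_{AB}$, which is exactly what is needed when the lemma is later applied to an informationally complete measurement to transfer trace-norm estimates between $V_A\otimes V_B$ and the image space.
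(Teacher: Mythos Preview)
Your proof is correct and, for the injectivity step, arguably cleaner than the paper's. The paper establishes injectivity of $T_A\otimes\operatorname{id}_B$ by passing through the canonical isomorphisms $V_A\otimes V_B\cong\mathcal{L}(V_B^\star,V_A)$ and $W_A\otimes V_B\cong\mathcal{L}(V_B^\star,W_A)$, under which $T_A\otimes\operatorname{id}_B$ becomes post-composition $F\mapsto T_A\circ F$, and then argues injectivity there; your functoriality observation $(S_A\otimes\operatorname{id}_B)\circ(T_A\otimes\operatorname{id}_B)=\operatorname{id}$ is more direct and avoids this detour. For the quantitative bound, the paper does essentially your ``alternative'': it takes a left inverse $S$ of $T_A\otimes\operatorname{id}_B$, notes its boundedness in finite dimension, and reads off $\tilde f=\lVert S\rVert^{-1}$. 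Your primary compactness argument on the unit sphere is a different but equally standard route that gives the sharpest constant; either way the dependence on $V_A$, $V_B$, $T_A$ (and the fixed norms) is exactly as required.
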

\begin{proof}
    See \autoref{sec:Proofs_of_auxiliary_results}.
\end{proof}
Before stating the main theorem, we introduce a key notion that will be used throughout this section. Let $K_A$ and $K_B$ be state spaces, $\tilde{\otimes}$ any tensor product and $\Omega$ an alphabet of finite cardinality. Given a bipartite state $x_{AB}\in K_A\tilde{\otimes} K_B$ and a measurement channel $\mathcal{M}\,:\, K_B\rightarrow \Delta_{\lvert \Omega \rvert}$ with effects $\left\{M_B^z\right\}_{z\in \Omega}$, the \emph{unnormalized conditional state} is the positive element $x_{A\vert z}\in A(K_A)^{\star +}$ defined by
\begin{align}
    \tilde{x}_{A\vert z}\coloneqq  \lrbracket{\operatorname{id}_A\otimes M^z_B}(x_{AB}).
\end{align}
Note that $\sum_{z\in \Omega} M_B^z=1_{K_B}$ implies $\sum_{z\in\Omega} x_{A\vert z}=x_A$. The probability of obtaining outcome $z\in \Omega$ is 
\begin{align}
    p(z)=\lrbracket{1_{K_A}\otimes M^z_B}(x_{AB})
\end{align}
such that, by \cite[Prop\ 5.17]{Pl_vala_2023},
\begin{align}
    x_{A\vert z}=\frac{\tilde{x}_{A\vert z}}{p(z)}
\end{align}
is the \emph{normalized conditional state}. We can now establish the main result of this paper, which we formulate as the following theorem\footnote{The extension to cones follows directly from \cite[Lem.\ 3.34]{Pl_vala_2023}.}.
\begin{theorem}[de Finetti for convex bodies]\label{thm:main_theorem}
If $x_{AB}\in\operatorname{Ext}_n\lrbracket{K_A, K_B}$, then there exists $\tilde{x}_{AB} \in K_A \tmin K_B$ such that 
    \begin{align}
        \left\lVert \tilde{x}_{AB} - x_{AB}\right\rVert_{\star, 1_{K_A\tmax K_B}}\leq \frac{2 c(A,B)}{\sqrt{n}},
    \end{align}
    where $c(A,B)$ is a constant depending only on $K_A$ and $K_B$.
\end{theorem}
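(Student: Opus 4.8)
The plan is to push the entire problem down to the classical (simplex) level by measuring \emph{all} systems with informationally complete measurements, run a standard chain-rule--plus--pigeonhole de Finetti argument there, and then ``un-measure'' using the injectivity built into informational completeness. First I would fix informationally complete measurements $\mathcal{M}_A\colon K_A\to\Delta_{|\Omega_A|}$ and $\mathcal{M}_B\colon K_B\to\Delta_{|\Omega_B|}$ (these exist since $K_A,K_B$ are finite-dimensional). Let $y_{AB_1^n}$ be the symmetric extension witnessing $x_{AB}\in\operatorname{Ext}_n(K_A,K_B)$, so $y_{AB_1^n}$ lies in the symmetric part of $K_A\tmax K_B^{\tmax n}$ and $x_{AB}$ is its reduction onto $A$ and a single $B$-copy. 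Set $\omega_{WZ_1^n}\coloneqq(\mathcal{M}_A\otimes\mathcal{M}_B^{\otimes n})(y_{AB_1^n})$. Since measurements are completely positive with respect to every tensor product and $\Delta$ is nuclear ($K\tmax\Delta=K\tmin\Delta$), $\omega_{WZ_1^n}$ is a genuine probability distribution, symmetric in the $Z$-registers. The key input is the monogamy bound: applying the data-processing inequality for mutual information \eqref{eq:DPI_mutual_information} to the local channel $\mathcal{M}_A\otimes\mathcal{M}_B^{\otimes n}$, and then \autoref{prop:uniform_A_upper_bound_mutual_information} with ``$K_B$'' instantiated as the composite system $K_B^{\tmax n}$ (the constant $\lambda_A$ is unchanged, as it depends only on $K_A$), gives the $n$-independent estimate
\[
    I(W:Z_1^n)_{\omega}\ \le\ I(A:B_1^n)_{y_{AB_1^n}}\ \le\ \lambda_A(1+\ln\lambda_A)\ =:\ c(A).
\]

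Because $\omega_{WZ_1^n}$ is classical, $D$ coincides with $D_{\mathrm{KL}}$, the $\inf$-definition \eqref{eq:def_mutual_information} reduces to ordinary mutual information (the minimizing reference is the marginal), and the classical chain rule applies: $\sum_{k=1}^{n} I(W:Z_k\mid Z_1^{k-1})_{\omega}=I(W:Z_1^n)_{\omega}\le c(A)$, where $I(W:Z_k\mid Z_1^{k-1})_{\omega}=\sum_{z}\omega(z)\,I(W:Z_k)_{\omega_{WZ_k\mid z}}$ and $z$ ranges over values of $Z_1^{k-1}$. Hence some $k\in\{1,\dots,n\}$ satisfies $\sum_{z}\omega(z)\,I(W:Z_k)_{\omega_{WZ_k\mid z}}\le c(A)/n$. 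For this $k$ I would then re-run the reduction keeping $A$ and $B_k$ unmeasured: trace out $B_{k+1},\dots,B_n$ and apply $\mathcal{M}_B$ only to $B_1,\dots,B_{k-1}$, obtaining $\Theta_{AB_kZ_1^{k-1}}\in(K_A\tmax K_B)\tmin\Delta_{|\Omega_B|}^{\tmin(k-1)}$; for each value $z$ of $Z_1^{k-1}$ this gives a normalized conditional state $\Theta_{AB_k\mid z}\in K_A\tmax K_B$ with probability $\omega(z)$, with $(\mathcal{M}_A\otimes\mathcal{M}_B)(\Theta_{AB_k\mid z})=\omega_{WZ_k\mid z}$, and with $x_{AB}=\sum_{z}\omega(z)\,\Theta_{AB_k\mid z}$ by permutation symmetry (summing the measurement outcomes on $B_1,\dots,B_{k-1}$ is the partial trace).

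To finish, the classical Pinsker inequality --- equivalently \autoref{lem:pinsker} combined with \autoref{prop:jencova} on the simplex --- gives $\lVert\omega_{WZ_k\mid z}-\omega_{W\mid z}\otimes\omega_{Z_k\mid z}\rVert_1\le\sqrt{2\,I(W:Z_k)_{\omega_{WZ_k\mid z}}}$. Since $\mathcal{M}_A\otimes\mathcal{M}_B$ is a tensor product of injective linear maps, \autoref{lem:injectivity} (equivalently the distortion bound \eqref{eq:def_distortion}) supplies a constant $\tilde{f}(A,B)$ depending only on $K_A,K_B$ (and the fixed measurements) with
\[
    \bigl\lVert\Theta_{AB_k\mid z}-\Theta_{A\mid z}\otimes\Theta_{B_k\mid z}\bigr\rVert_{\star,1_{K_A\tmax K_B}}\ \le\ \tilde{f}(A,B)^{-1}\,\bigl\lVert\omega_{WZ_k\mid z}-\omega_{W\mid z}\otimes\omega_{Z_k\mid z}\bigr\rVert_1,
\]
because $\Theta_{A\mid z}\otimes\Theta_{B_k\mid z}$ is the (unique) element of $K_A\tmin K_B$ that $\mathcal{M}_A\otimes\mathcal{M}_B$ sends to $\omega_{W\mid z}\otimes\omega_{Z_k\mid z}$, its marginals measuring to $\omega_{W\mid z}$ and $\omega_{Z_k\mid z}$. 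Setting $\tilde{x}_{AB}\coloneqq\sum_{z}\omega(z)\,\Theta_{A\mid z}\otimes\Theta_{B_k\mid z}\in K_A\tmin K_B$ (a convex combination of product states), the triangle inequality followed by concavity of $\sqrt{\cdot}$ (Jensen) yields
\[
    \bigl\lVert\tilde{x}_{AB}-x_{AB}\bigr\rVert_{\star,1_{K_A\tmax K_B}}\ \le\ \tilde{f}(A,B)^{-1}\sqrt{2}\,\sqrt{\textstyle\sum_{z}\omega(z)\,I(W:Z_k)_{\omega_{WZ_k\mid z}}}\ \le\ \tilde{f}(A,B)^{-1}\sqrt{\tfrac{2c(A)}{n}},
\]
which is the claimed bound with $c(A,B)=\tfrac{1}{2}\tilde{f}(A,B)^{-1}\sqrt{2c(A)}$.

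\textbf{Main obstacle.}
I expect the crux to be the careful coordination of the two passes through the extension --- fully measured for the chain rule, then only partially measured for the reconstruction --- so that the conditional states match up and, crucially, so that the near-product structure genuinely survives un-measuring; this works only because \autoref{lem:injectivity} is invoked on a product (separable) state, whose unique preimage is again a product state in $K_A\tmin K_B$. A second delicate point is the legitimacy of applying \autoref{prop:uniform_A_upper_bound_mutual_information} with $K_B^{\tmax n}$ in the ``$B$'' slot: this is precisely the monogamy statement and is what makes $c(A)$ independent of $n$, hence what makes the whole scheme quantitative. Measuring $A$ as well (rather than keeping it a GPT system) is what makes the chain rule available: with the $\inf$-based mutual information of \eqref{eq:def_mutual_information} and the non-additive relative entropy of \eqref{eq:def_relative_entropy}, a chain rule for a quantum $A$ is not obviously at hand, whereas on simplices everything is classical. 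The remaining work is routine: verifying the nuclear collapse for all the $\Delta$-factors, checking normalizations of the conditional states, and tracking the constant relating the supremum-over-effects distance to the order-unit norm (the source of the explicit factor $2$).
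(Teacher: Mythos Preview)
Your proposal is correct and follows essentially the same route as the paper's proof: measure to a fully classical system, invoke \autoref{prop:uniform_A_upper_bound_mutual_information} for the $n$-independent monogamy bound, run the classical chain rule plus pigeonhole, apply Pinsker, and lift back via the injectivity of informationally complete measurements (\autoref{lem:injectivity}). The only cosmetic differences are (i) you apply \autoref{prop:uniform_A_upper_bound_mutual_information} directly to $I(A:B_1^n)$ with $K_B^{\tmax n}$ in the $B$ slot and then DPI everything down, whereas the paper first measures the $B$-copies and applies the proposition to $I(A:Z_1^n)$ before measuring $A$; and (ii) you lift each conditional difference and then aggregate with triangle $+$ Jensen, whereas the paper aggregates the classical differences first (convexity of $x\mapsto x^2$) and lifts once --- both orderings yield the same bound up to the choice of constant $c(A,B)$.
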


\begin{proof}
    By definition there exists a permutation-invariant extension $y_{AB^n}\in K_A\tmax K_B^{\tmax n}$ such that $x_{AB} = \tr_{B_2 \ldots B_n} y_{AB^n}$. Fix informationally complete measurement maps $\MM_{A\rightarrow Y}: K_A\rightarrow K_Y$ and $\MM_{B\rightarrow Z}: K_B\rightarrow K_Z$ into classical simplices\footnote{Thus, there exist natural numbers $y$ and $z$ such that $K_Y\cong \Delta_y$ and $K_Z\cong \Delta_z$. Recall, that $\Delta_y\tmax \Delta_Z=\Delta_y\tmin \Delta_Z$.}. Set 
    \begin{align}
        y_{AZ^n}\coloneqq  \lrbracket{\operatorname{id}_A\otimes \MM_{B\rightarrow Z}^{\otimes n}}\lrbracket{y_{AB^n}},\quad y_{YZ^n}\coloneqq  \lrbracket{\MM_{A\rightarrow Y}\otimes \MM_{B\rightarrow Z}^{\otimes n}}\lrbracket{y_{AB^n}}.
    \end{align}
    Let $c(A)$ be the constant from \autoref{prop:uniform_A_upper_bound_mutual_information} so that $I\lrbracket{A:Z_1^n}_{y_{AZ^n}}\leq c(A)$. By data processing under the local measurement channel $\MM_{A\rightarrow Y}$ (cf. \eqref{eq:DPI_mutual_information}),
    \begin{align}\label{eq:proof_deFinetti_mutual_information_bound}
        I(Y:Z^n)_{y_{YZ^n}}\leq I(A:Z^n)_{y_{AZ^n}} \leq c(A).
    \end{align}
    Since $Y,\, Z_1,\ldots,\,Z_n$ are classical, we have equality in \autoref{eq:questioned_equality}, i.e.\
    \begin{align}\label{eq:proof_deFinetti1}
        I(Y:Z_{m+1})_{y_{YZ_{m+1}\vert z^m}} = D(y_{YZ_{m+1}\vert z^m} \Vert y_{Y\vert z^m}\otimes y_{Z_{m+1}\vert z^m}),
    \end{align}
     where $y_{YZ_{m+1}\vert z^m}$, $y_{Y\vert z^m}$ and $y_{Z_{m+1}\vert z^m}$ denote the normalized conditional states indexed in measurement outcomes of the corresponding measurement channels. The chain rule for classical mutual information (cf., e.g.\ \cite{Csiszr2011}) gives
    \begin{align}
    \begin{split}
        I(Y:Z^n)_{y_{YZ^n}}&=\sum_{m=0}^{n-1}I(Y:Z_{m+1}\vert Z^m)_{y_{YZ_{m+1}}}\\
        &=\sum_{m=0}^{n-1}\sum_{z^m} p(z^m) I(Y:Z_{m+1})_{y_{YZ_{m+1}\vert z^m}}.
    \end{split}
    \end{align}
    The positivity of the mutual information together with the bound $I(Y: Z^n)_{y_{YZ^n}}\leq c(A)$ implies that there exists an $m\in\left\{0,\ldots,\, n-1\right\}$,
    \begin{align}\label{eq:proof_deFinetti_existence_m}
       \sum_{z^m} p(z_1^m) I(Y:Z_{m+1})_{y_{YZ_{m+1}\vert z^m}}\leq  \frac{c(A)}{n}.
   \end{align}
   Note that $m$ depends on $n$. With \autoref{eq:proof_deFinetti1}, applying the Pinkser-type inequality in \autoref{lem:pinsker} to each term and using convexity of $x \mapsto x^2$ 
   yields
   \begin{align}
       \sup_{f \in E(K_{YZ_{m+1}})}  f\left( y_{YZ_{m+1}} - \sum_{z^m}p(z^m) y_{Y\vert z^m} \otimes y_{Z_{m+1}\vert z^m} \right) \leq \sqrt{\frac{2 c(A)}{n}}
   \end{align}
   We now lift this estimate back to $K_A$, $K_B$. Because $\MM_{A\rightarrow Y}$ and $\MM_{B\rightarrow Z}$ are informationally complete, the induced linear maps are injective. By \autoref{lem:injectivity} implies the existence of constants $\tilde{f}(A,B,\MM_{A\to Y})>0$ and $\tilde{f}(A,B,\MM_{B\to Z})>0$ such that, for every $x' \in K_A\tmax K_B$ we have
   \begin{align}
   \begin{split}
        \sup_{g\in E(K_{YZ})}& g\lrbracket{\lrbracket{\MM_{A\rightarrow Y}\otimes \MM_{B\rightarrow Z}}(x')}\\
        &\geq \tilde{f}(A,B,\MM_{A\to Y}) \tilde{f}(A,B,\MM_{B\to Z}) \sup_{h\in E(K_A\tmax K_B)}h(x').
   \end{split}
   \end{align}
   Apply this with $x' =  y_{AB_{m+1}}-\sum_{z^m}p(z^m) y_{A\vert z^m} \otimes y_{B_{m+1}\vert z^m}$, whose image under $\MM_{A\rightarrow Y}\otimes \MM_{B\rightarrow Z}$ is exactly the difference appearing in the previous inequality. Writing $f(A,B) \coloneqq \tilde{f}(A,B,\MM_{A\to Y}) \tilde{f}(A,B,\MM_{B\to Z})$, we obtain 
    \begin{align}
        \sup_{h\in E(K_A\tmax K_{B_{m+1}})} h\lrbracket{y_{AB_{m+1}}-\sum_{z^m}p(z_1^m) y_{A\vert z_1^m} \otimes y_{B_{m+1}\vert z^m}} \leq \frac{\sqrt{2 c(A)}}{\sqrt{n} f(A,B)} .
    \end{align}
   Finally, permutation invariance of $y_{AB^n}$ implies $y_{AB_{m+1}} = x_{AB}$. Define
   \begin{align}
       \tilde{x}_{AB} \coloneqq \sum_{z^m}p(z^m) y_{A\vert z^m} \otimes y_{B\vert z^m} \in K_A\tmin K_B.
   \end{align}
   Relabeling $B_{m+1}$ as $B$ in the conditional term gives 
   \begin{align}
       \sup_{h\in E\lrbracket{K_A\tmax K_B}} h\lrbracket{\tilde{x}_{AB}-x_{AB}}\leq \frac{\sqrt{2c(A)}}{\sqrt{n} f(A,B)} \eqqcolon \frac{c(A,B)}{\sqrt{n}}.
   \end{align}   
    Moreover, using the natural evaluation pairing for the dual system, we compute
    \begin{align}
        \begin{split}
             \sup_{h\in E(K_A \hat{\otimes}K_B)} h\lrbracket{\tilde{x}_{AB} - x_{AB}} &=  \sup_{h\in E(K_A \hat{\otimes}K_B)} \left\langle \tilde{x}_{AB} - x_{AB},\, h\right\rangle\\
             &= \frac{1}{2}\sup_{h\in E(K_A \hat{\otimes}K_B)}\left\langle \tilde{x}_{AB} - x_{AB},\, 2h-1_{K_A\tmax K_B}  \right\rangle\\
             &= \frac{1}{2}\sup_{h\in E(K_A \hat{\otimes}K_B)}\left\vert\left\langle \tilde{x}_{AB} - x_{AB},\, 2h-1_{K_A\tmax K_B}  \right\rangle\right\vert\\
             &= \frac{1}{2} \left\Vert \tilde{x}_{AB} - x_{AB}\right\Vert_{\star, 1_{K_A\tmax K_B}}.
        \end{split}
    \end{align}
    Here the second equality uses the normalization of states 
    \begin{align}
        \langle\tilde{x}_{AB} - x_{AB},\, 1_{K_A\tmax K_B} \rangle=0,
    \end{align}
    and the third equality follows from the complement-effect argument \cite[Lem.\ 3.15]{Pl_vala_2023}, which shows the supremum is attained with non-negative sign, hence the absolute value may be inserted without changing the value. The last equality follows from \cite[Lem.\ 3.41]{Pl_vala_2023}. Thus, 
    \begin{align}
         \left\Vert \tilde{x}_{AB} - x_{AB}\right\Vert_{\star, 1_{K_A\tmax K_B}}\leq \frac{2 \ c(A,B)}{\sqrt{n}}.
    \end{align}
\end{proof}

\subsection{Computing the upper bound $c(A,B)$}
The upper bound $c(A,B)$ crucially depends on the upper bound given by \autoref{prop:uniform_A_upper_bound_mutual_information}, which itself depends on the upper bound given by \autoref{lemma:cA-orderUnitBound}. One can optimize the choice of $\tau_A \in \operatorname{relint}(K_A)$ to get numerically as low an upper bound as possible. This is for example straightforward if $K_A$ is a $d$-dimensional ball, as in this case taking $\tau_A$ as the geometric middle of the ball yields $\lambda_A = 2$.

Alternatively, one could bound $I(Y:Z_1^n)_{x_{YZ_1^n}}$ in terms of the dimension of $Y$ (cf., e.g., \cite{Csiszr2011}), which would lead to different constants in the convergence rate. However, for the operational interpretation, namely, a \emph{monogamy-of-entanglement} effect for arbitrary convex bodies, \autoref{prop:uniform_A_upper_bound_mutual_information} yields the central result.

\section{A convergent hierarchy for polynomial optimization}\label{sec:convergent_hierarchy_for_polynomial_optimization}

Recall, the optimization problem in \autoref{eq:final_optimization_for_linear_constraints} given by
\begin{align}
\begin{split}
    p^{*}= \min_{x_A\otimes x_B\in K_{A}\tmin K_{B}} \quad &  P\lrbracket{x_A \otimes x_B}\\
    \text{s.t.} \quad & F_A(x_A) = 0 \quad \text{and} \quad F_B(x_B)=0 \\
    & G_A(x_A)\geq 0 \quad \text{and} \quad G_B(x_B) \geq 0,
\end{split} \label{eq:optimization_for_local_constraints}
\end{align}
where $F_A, F_B, G_A,$ and $G_B$ are linear maps. In this section, we develop a hierarchy of outer approximations to \autoref{eq:optimization_for_local_constraints}, whose finite convergence is guaranteed by \autoref{thm:main_theorem}.

\subsection{The hierarchy}

Consider $x_{AB}\in K_A\tmax K_B$ and $n\in\mathbb{N}$. Then, $x_{AB} \in \Sigma^{n}(A:B)\subseteq K_A\tmax K_B$ if 
\begin{enumerate}
    \item[(1)] $x_{AB}\in\operatorname{Ext}_n\lrbracket{K_A, K_B}$, i.e.\ there exists an $n$-extension $y_{AB^n} \in K_A \hat{\otimes}K_B^{\hat{\otimes}n}$ of $x_{AB}$,
    \item[(2)] For $y_{AB_1^n}$ we have:
    \begin{enumerate}
        \item $F_A \otimes \operatorname{id}_{B^n}(y_{AB^n}) = 0$,
        \item $\operatorname{id}_{A} \otimes F_B \otimes \operatorname{id}_{B^{n-1}}(y_{AB^n}) = 0$,
        \item $G_A \otimes \operatorname{id}_{B^n}(y_{AB^n}) \geq 0$,
        \item $\operatorname{id}_{A} \otimes G_B \otimes \operatorname{id}_{B^{n-1}} (y_{AB^n}) \geq 0$.
    \end{enumerate}
\end{enumerate}
Here $\operatorname{id}_{B^n}$ is a shorthand for the identity channel on $K_B^{\tmax n}$. Note that $\Sigma^{n+1}(A:B)\subseteq \Sigma^{n}(A:B)$ and we define
\begin{align}
    \Sigma(A:B)\coloneqq\bigcap_{n\geq 1}\Sigma^n(A:B).
\end{align}
such that $\Sigma(A:B)\subseteq K_A\tmin K_B$. Let $p^*$ denote the optimal value of the polynomial optimization problem with local constraints considered in \autoref{eq:optimization_for_local_constraints}.
\begin{theorem}[Convergence of the hierarchy]\label{thm:convergence_hierarchy}
    Let $K_A$ and $K_B$ be state spaces, $n\in\mathbb{N}$ and consider the optimization program 
        \begin{equation}\label{eq:n_level_relaxation}
        \begin{aligned}
            p^{(n)}\coloneqq \inf \ &  P\lrbracket{x_{AB}}\\
            \operatorname{s.t.} \ & x_{AB}\in \Sigma^{n}(A:B).
        \end{aligned}
    \end{equation}
    Then,
    \begin{align}
       0\leq  p^* - p^{(n)}  \leq \frac{2\ \Vert P\Vert_{1_{K_A\tmax K_B}}c(A,B)}{\sqrt{n}},
    \end{align}
    where the constant $c(A,B)$ is the one appearing in \autoref{thm:main_theorem}.
\end{theorem}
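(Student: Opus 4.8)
The plan is to establish the two inequalities separately: $p^{(n)}\le p^*$ records that \autoref{eq:n_level_relaxation} is an outer relaxation, while $p^*-p^{(n)}\le 2\lVert P\rVert_{1_{K_A\tmax K_B}}c(A,B)/\sqrt{n}$ is exactly where the finite de Finetti theorem \autoref{thm:main_theorem} is invoked.

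\emph{Outer bound $p^{(n)}\le p^*$.} Given any feasible pair $(x_A,x_B)$ for \autoref{eq:optimization_for_local_constraints}, I would exhibit the product extension $y_{AB^n}\coloneqq x_A\otimes x_B^{\otimes n}$. It lies in $K_A\tmin K_B^{\tmin n}\subseteq K_A\tmax K_B^{\tmax n}$, is invariant under permutations of the $B$-legs, and reduces on $AB_1$ to $x_A\otimes x_B$. One checks the defining conditions of $\Sigma^n(A:B)$: the equality constraints (2a)--(2b) hold because $F_A(x_A)=0$ (resp.\ $F_B(x_B)=0$) forces the relevant tensor to vanish, and the inequality constraints (2c)--(2d) hold because $G_A(x_A)\ge0$ tensored with the positive element $x_B^{\otimes n}$ is again positive (a product of positive elements lies in the minimal, hence in every, tensor cone). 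Thus $x_A\otimes x_B\in\Sigma^n(A:B)$, so $p^{(n)}\le P(x_A\otimes x_B)$; taking the infimum over feasible pairs gives $p^{(n)}\le p^*$.

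\emph{Convergence bound.} Fix $\varepsilon>0$ and pick $x_{AB}\in\Sigma^n(A:B)$ with $P(x_{AB})\le p^{(n)}+\varepsilon$, together with an $n$-extension $y_{AB^n}\in K_A\tmax K_B^{\tmax n}$ satisfying conditions (2a)--(2d). Applying \autoref{thm:main_theorem} yields $\tilde x_{AB}\in K_A\tmin K_B$ with $\lVert\tilde x_{AB}-x_{AB}\rVert_{\star,1_{K_A\tmax K_B}}\le 2c(A,B)/\sqrt{n}$. The essential point is that the proof of \autoref{thm:main_theorem} delivers $\tilde x_{AB}$ in the explicit form $\tilde x_{AB}=\sum_{z^m}p(z^m)\,y_{A\vert z^m}\otimes y_{B\vert z^m}$, where $y_{A\vert z^m}$ and $y_{B\vert z^m}$ are normalized conditional states obtained from $y_{AB^n}$ by measuring $B_1,\dots,B_m$ with outcome string $z^m$ and marginalizing the remaining $B$-legs. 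I would then show that each summand $(y_{A\vert z^m},y_{B\vert z^m})$ is \emph{feasible} for \autoref{eq:optimization_for_local_constraints}. Conditioning-and-marginalizing on the $B$-systems amounts to applying a positive product functional $\ell_{z^m}$ on $V_B^{\otimes n}$ (a tensor of effects and units) to the $B$-part; hence $F_A(y_{A\vert z^m})$ is the image of $(F_A\otimes\operatorname{id}_{B^n})(y_{AB^n})=0$ under $\operatorname{id}\otimes\ell_{z^m}$ and vanishes; for the $B$-side one first uses permutation invariance of $y_{AB^n}$ to transfer $F_B$ from leg $B_1$ to the leg $B_{m+1}$ that survives the marginalization, obtaining $F_B(y_{B\vert z^m})=0$; and, since $\ell_{z^m}$ and the analogous $B$-functional are positive and the extension is positive, the same maps send the positive tensors of (2c)--(2d) to $G_A(y_{A\vert z^m})\ge0$ and $G_B(y_{B\vert z^m})\ge0$ — here one uses that functionals assembled from $C_B^\star$ are positive on $C_B^{\tmax n}$, i.e.\ the duality between minimal and maximal tensor products of dual cones. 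Feasibility gives $P(y_{A\vert z^m}\otimes y_{B\vert z^m})\ge p^*$ for every $z^m$, so linearity of $P$ and $\sum_{z^m}p(z^m)=1$ yield $P(\tilde x_{AB})\ge p^*$. Combining this with the dual-norm estimate $\lvert P(x_{AB}-\tilde x_{AB})\rvert\le\lVert P\rVert_{1_{K_A\tmax K_B}}\lVert x_{AB}-\tilde x_{AB}\rVert_{\star,1_{K_A\tmax K_B}}$ gives
\begin{align*}
    p^{(n)}+\varepsilon\ \ge\ P(x_{AB})\ &\ge\ P(\tilde x_{AB})-\frac{2\lVert P\rVert_{1_{K_A\tmax K_B}}c(A,B)}{\sqrt{n}}\\
    &\ge\ p^*-\frac{2\lVert P\rVert_{1_{K_A\tmax K_B}}c(A,B)}{\sqrt{n}},
\end{align*}
and letting $\varepsilon\downarrow0$ finishes the proof.

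\emph{Main obstacle.} The delicate step is the constraint transfer in the second part: proving that $\tilde x_{AB}$ is not merely close to $x_{AB}$ but genuinely feasible for the original problem. This is precisely where the local (direct-sum) form of the constraint maps is indispensable; it forces one to unpack the explicit separable decomposition of $\tilde x_{AB}$ produced inside the proof of \autoref{thm:main_theorem} and to verify that every conditioning/marginalization map preserves the equality constraints (immediate from linearity) and — more subtly — the cone memberships underlying the inequality constraints, which rely on min--max tensor duality. (Throughout one assumes, as usual, that the original problem is feasible, so that the outer bound also guarantees $\Sigma^n(A:B)\neq\emptyset$ and $p^{(n)}<\infty$.)
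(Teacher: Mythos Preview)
Your proof is correct and follows essentially the same approach as the paper: invoke the explicit separable decomposition from the proof of \autoref{thm:main_theorem}, verify that each conditional pair $(y_{A\vert z^m},y_{B\vert z^m})$ satisfies the local constraints via linearity and positivity of the conditioning maps (using permutation invariance to move $F_B,G_B$ to the surviving leg), and conclude via the order-unit/dual-norm H\"older estimate. Your version is in fact slightly more careful than the paper's, since you use an $\varepsilon$-optimizer rather than assuming the infimum in \autoref{eq:n_level_relaxation} is attained, and you spell out the outer-bound inclusion $p^{(n)}\le p^*$ explicitly.
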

\begin{proof}
    Let $x^{(n)}_{AB}$ be an optimizer of the $n$-th level program in \autoref{eq:n_level_relaxation}. By feasibility $x^{(n)}_{AB}\in \Sigma^n(A:B)$, there exists a permutation-invariant extension $y^{(n)}_{AB^n}\in K_A\tmax K_B^{\tmax n}$ satisfying the constraints on the extension as in the definition of $\Sigma^n(A:B)$. Choose an informationally complete measurement map $\MM_{B\rightarrow Z}: K_B\rightarrow K_Z$ into a classical simplex $K_Z$. Apply the de Finetti construction of \autoref{thm:main_theorem} to the extendible state $y^{(n)}_{AB^n}$: given the specific $m$ from \eqref{eq:proof_deFinetti_existence_m} and measuring $m$ of the $B$-copies yields a classical outcome $z_1^m$ with probability $p(z^m)$ and conditional post-measurement states $x_{A\vert z^m}\in K_A$ and $x_{B\vert z^m}\in K_B$. Concretely, writing $M_{z^m}$ for the (product) measurement effect producing $z^m$, each conditional state is obtained by normalizing a positive post-measurement marginal, e.g.
    \begin{align}
        x_{A\vert z^m} = \frac{\tr_{Z^m}[M_{z^m}(y^{(n)}_{AB^{m}})]}{\tr[M_{z^m}(y^{(n)}_{AB^{m}})]}
    \end{align}
    and similarly for $x_{B\vert z^m}$. Define the separable candidate (de Finetti representative)
    \begin{align}\label{eq:proof_convergence_hierarchy_candidate_state}
        \tilde{x}^{(n)}_{AB} = \sum_{z^m} p(z^m) x_{A\vert z^m}\otimes x_{B_{m+1}\vert z^m}\in K_A\tmin K_B
    \end{align}
    such that $p^*\leq p^*_n\coloneqq  P\lrbracket{\tilde{x}^{(n)}_{AB}}$ in \autoref{eq:optimization_for_local_constraints} by the infimum. Using permutation invariance over the $B$-systems and \autoref{thm:main_theorem} applied to $x^{(n)}_{AB}$, this candidate satisfies the de Finetti bound
    \begin{align}
         \left\Vert \tilde{x}_{AB}^{(n)} - x_{AB}^{(n)}\right\Vert_{\star, 1_{K_A\tmax K_B}}\leq \frac{2 \ c(A,B)}{\sqrt{n}}.
    \end{align}
    Invoking a Hölder inequality for the order-unit norm and its dual (cf. \autoref{prop:hoelder_ineq_order_unit_norm}), linearity of $P$, and $P\lrbracket{x_{AB}^{(n)}}\leq P\lrbracket{\tilde{x}_{AB}^{(n)}}$ by the infimum, yields
    \begin{align}
    \begin{split}
        0\leq p^*-p^{(n)}\leq p^*_n-p^{(n)} &=P\lrbracket{\tilde{x}_{AB}^{(n)}}-P\lrbracket{x_{AB}^{(n)}}\\
        &= P\lrbracket{\tilde{x}_{AB}^{(n)} - x_{AB}^{(n)}}\\
        &=\vert \langle \tilde{x}_{AB}^{(n)} - x_{AB}^{(n)},\, P\rangle\vert\\
        &\leq \Vert P\Vert_{1_{K_A\tmax K_B}} \cdot \Vert \tilde{x}_{AB}^{(n)} - x_{AB}^{(n)}\Vert_{\star, 1_{K_A\tmax K_B}} \\
        &\leq \frac{2\ \Vert P\Vert_{1_{K_A\tmax K_B}}c(A,B)}{\sqrt{n}}. 
    \end{split}
    \end{align}
    It remains to show that $\tilde{x}^{(n)}_{AB}$ is feasible for the original constrained problem in \autoref{eq:optimization_for_local_constraints}. This is verified termwise: for every outcome $z^m$, the conditional states satisfy the local constraints. Because the extension satisfies $F_A \otimes \operatorname{id}_{B^n}(y^{(n)}_{AB^n}) = 0$ and $\operatorname{id}_{A} \otimes F_B \otimes \operatorname{id}_{B^{n-1}}(y^{(n)}_{AB^n}) = 0$, linearity and commutation with partial trace imply\footnote{We omit an explicit proof, as it follows \cite[App.\ A]{zeiss2025approximatingfixedsizequantum} with only minor modifications. See also \autoref{eqn:proof_ineq_convergent_hierarchy_1} and \autoref{eqn:proof_ineq_convergent_hierarchy_2}.}
    \begin{align}
        F_A(x_{A\vert z^m}) = 0, \quad F_B(x_{B_{m+1}\vert z^m}) = 0. 
    \end{align}
    Likewise, since $G_A \otimes \operatorname{id}_{B^n}(y^{(n)}_{AB^n}) \geq 0$ and $\operatorname{id}_{A} \otimes G_B \otimes \operatorname{id}_{B^{n-1}} (y^{(n)}_{AB^n}) \geq 0$, complete positivity of the measurement and positivity of partial trace (cf., e.g.\ \cite[Prop.\ 6.23 and Prop.\ 6.30]{Pl_vala_2023}) preserve the inequalities, giving
    \begin{align}\label{eqn:proof_ineq_convergent_hierarchy_1} 
    \begin{split}
        G_A(x_{A\vert z^m}) &= G_A \lrbracket{\frac{\tr_{Z^m}[M_{z^m}(y^{(n)}_{AB^{m}})]}{\tr[M_{z^m}(y^{(n)}_{AB^{m}})]}} \\
                &= G_A\lrbracket{\tr_{Z^m}\left[M_{z^m}\lrbracket{\frac{y^{(n)}_{AB^{m}}}{\tr[M_{z^m}(y^{(n)}_{AB^{m}})]}}\right]} \\
                &= \tr_{Z^m}\left[M_{z^m}\lrbracket{G_A\lrbracket{\frac{y^{(n)}_{AB^{m}}}{\tr[M_{z^m}(y^{(n)}_{AB^{m}})]}}}\right]\\
                &= \frac{1}{\tr\left[M_{z^m}(y^{(n)}_{AB^{m}})\right]}\tr_{Z^m}\left[M^{z^m}\lrbracket{G_A\lrbracket{y^{(n)}_{AB^{m}}}}\right] \\
                &\geq 0
    \end{split}
    \end{align}
    and 
    \begin{align}\label{eqn:proof_ineq_convergent_hierarchy_2}
    \begin{split}
         G_B(x_{B_{m+1}\vert z^m}) &= G_B \lrbracket{\frac{\tr_{AZ^m}[M_{z^m}(y^{(n)}_{AB^{m+1}})]}{\tr[M_{z^m}(y^{(n)}_{AB^{m+1}})]}} \\
                &= \frac{1}{\tr\left[M_{z^m}(y^{(n)}_{AB^{m+1}})\right]} G_B\lrbracket{\tr_{AZ^m}\left[M_{z^m}(y^{(n)}_{AB^{m+1}})\right]} \\
                &= \frac{1}{\tr\left[M_{z^m}(y^{(n)}_{AB^{m+1}})\right]} \tr_{AZ^m}\left[M_{z^m}\lrbracket{G_{B}(y^{(n)}_{AB^{m+1}})}\right] \\
                &= \frac{1}{\tr\left[M_{z^m}(y^{(n)}_{AB^{m+1}})\right]} \tr_{AZ^m}\left[M_{z^m}\lrbracket{\operatorname{tr}_{B_{m+2}^n}[G_{B}(y^{(n)}_{AB^{n}})]}\right] \\ 
                &\geq 0.
    \end{split}
    \end{align}
    Therefore, by permutation invariance, every product term $x_{A\vert z^m}\otimes x_{B\vert z^m}$ is feasible in \autoref{eq:optimization_for_local_constraints}, and hence their convex combination $ \tilde{x}^n_{AB}$ is feasible as well. As the arguments hold for every level $n\in \mathbb{N}$, we obtain a sequence of states converging to the true optimizer in \autoref{eq:optimization_for_local_constraints}.
\end{proof}

\subsection{Generating inner points}
We show that the proof technique underlying \autoref{thm:main_theorem} yields at each level of the hierarchy \autoref{eq:n_level_relaxation}, an explicit interior feasible point via a constructive rounding scheme, together with a quantitative bound on the resulting objective gap to the optimum $p^*$ of \autoref{eq:optimization_for_local_constraints}. A compactness argument then yields a sequence of these interior feasible points converging to a true optimum of \autoref{eq:optimization_for_local_constraints}. The argument is inspired by \cite{zeiss2025approximatingfixedsizequantum,kossmann2025symmetric}, but is formulated for general conic programs of the form studied here.

\begin{theorem}[Convergent inner sequence]\label{thm:inner_bounds}
    In the setting of \autoref{thm:convergence_hierarchy}, for each $n\in\mathbb{N}$, consider
    \begin{equation}\label{eq:optimizer_thm_inner_bounds}
    \begin{aligned}
        x^{(n)}_{AB}
        \;=\;
        \arg \max \ &
        P \lrbracket{x_{AB}}\\
        \operatorname{s.t.}\ & x_{AB} \in \Sigma^n(A:B)
    \end{aligned}
    \end{equation}
    and denote by $y^{(n)}_{AB^n}$ its extension. Then there exists an explicitly constructible sequence $\lrbracket{w^{(n)}_{AB}}_{n\geq 1}$ such that $w^{(n)}_{AB}$ is feasible for the original problem (\autoref{eq:final_optimization_for_linear_constraints}) and 
    \begin{align}
        \left\lVert w^{(n)}_{AB} - x^{(n)}_{AB}\right\rVert_{\star, 1_{K_A\tmax K_B}}
        \leq \frac{2 c(A,B)}{\sqrt{n}}.
    \end{align}
    Hence, 
    \begin{align}
        0\leq  P\lrbracket{w^{(n)}_{AB}} - p^*\leq  \Vert P\Vert_{1_{K_A\tmax K_B}}\frac{2 c(A,B)}{\sqrt{n}}.
    \end{align}
    In particular, $ P\lrbracket{w^{(n)}_{AB}}\rightarrow p^*$ as $n\rightarrow\infty$.
\end{theorem}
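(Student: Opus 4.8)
The plan is to take $w^{(n)}_{AB}$ to be the separable de Finetti representative produced by the construction in the proofs of \autoref{thm:main_theorem} and \autoref{thm:convergence_hierarchy}, now fed with the \emph{optimal} extendible state of the $n$-th relaxation instead of an arbitrary feasible one, and to read off the norm bound, feasibility, and the objective estimate from this single object. Concretely, fix an informationally complete measurement $\MM_{B\rightarrow Z}\colon K_B\rightarrow K_Z$ into a classical simplex; let $x^{(n)}_{AB}$ be an optimizer of the $n$-th relaxation \autoref{eq:n_level_relaxation} (as in \autoref{eq:optimizer_thm_inner_bounds}), with optimal value $p^{(n)}=P\lrbracket{x^{(n)}_{AB}}$, and let $y^{(n)}_{AB^n}\in K_A\tmax K_B^{\tmax n}$ be the permutation-invariant extension certifying $x^{(n)}_{AB}\in\Sigma^{n}(A:B)$. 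Running the argument of \autoref{thm:main_theorem} on $y^{(n)}_{AB^n}$ yields a pigeonhole index $m=m(n)\in\{0,\ldots,n-1\}$, outcomes $z^m$ with probabilities $p(z^m)$, normalized conditional states $x_{A\vert z^m}\in K_A$ and $x_{B_{m+1}\vert z^m}\in K_B$, and the separable state
\begin{align}
    w^{(n)}_{AB}\coloneqq\sum_{z^m}p(z^m)\,x_{A\vert z^m}\otimes x_{B_{m+1}\vert z^m}\in K_A\tmin K_B.
\end{align}
Since $x^{(n)}_{AB}\in\Sigma^{n}(A:B)\subseteq\operatorname{Ext}_n(K_A,K_B)$, \autoref{thm:main_theorem}, read with $\tilde{x}_{AB}=w^{(n)}_{AB}$, immediately gives $\lVert w^{(n)}_{AB}-x^{(n)}_{AB}\rVert_{\star,1_{K_A\tmax K_B}}\le 2c(A,B)/\sqrt{n}$, the first claimed estimate; the sequence is explicitly constructible because each $x^{(n)}_{AB}$ is the optimum of a conic program (cf.\ \autoref{prop:lifted_conic_program}) while passing to $w^{(n)}_{AB}$ only uses the fixed measurement $\MM_{B\rightarrow Z}$ and the deterministic choice of $m$.

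Next I would establish that $w^{(n)}_{AB}$ is feasible for \autoref{eq:final_optimization_for_linear_constraints}; the extra ingredient beyond \autoref{thm:main_theorem} is that the extension $y^{(n)}_{AB^n}$ — being the one in the definition of $\Sigma^{n}(A:B)$ — additionally satisfies $F_A\otimes\operatorname{id}_{B^n}(y^{(n)}_{AB^n})=0$, $\operatorname{id}_A\otimes F_B\otimes\operatorname{id}_{B^{n-1}}(y^{(n)}_{AB^n})=0$, and the two inequality analogues. Feasibility is then checked termwise, exactly as in \autoref{eqn:proof_ineq_convergent_hierarchy_1} and \autoref{eqn:proof_ineq_convergent_hierarchy_2} of the proof of \autoref{thm:convergence_hierarchy}: for each outcome $z^m$ the conditional states are normalized images of $y^{(n)}_{AB^n}$ under the product measurement effect $M_{z^m}$ followed by partial traces over $A$, over the measured registers, and over the unmeasured $B$-copies; linearity and commutation of $F_A$, $F_B$ with these partial traces force $F_A(x_{A\vert z^m})=0$ and $F_B(x_{B_{m+1}\vert z^m})=0$, while complete positivity of measurements and positivity of the partial trace (cf.\ \cite[Prop.\ 6.23 and Prop.\ 6.30]{Pl_vala_2023}) propagate $G_A(x_{A\vert z^m})\ge 0$ and $G_B(x_{B_{m+1}\vert z^m})\ge 0$. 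Hence $w^{(n)}_{AB}$ is a convex combination of feasible product states, so feasible for \autoref{eq:final_optimization_for_linear_constraints}, and by linearity $P\lrbracket{w^{(n)}_{AB}}=\sum_{z^m}p(z^m)\,P\lrbracket{x_{A\vert z^m}\otimes x_{B_{m+1}\vert z^m}}\ge p^*$.

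It remains to close the two-sided bound. Every product state feasible for \autoref{eq:final_optimization_for_linear_constraints} already lies in $\Sigma^{n}(A:B)$ — use $x_A\otimes x_B^{\otimes n}$ as its $B$-symmetric extension — so $p^{(n)}=P\lrbracket{x^{(n)}_{AB}}\le p^*$. Combining the Hölder inequality for the order-unit norm and its dual (\autoref{prop:hoelder_ineq_order_unit_norm}), linearity of $P$, and the norm bound above,
\begin{align}
    \lrvert{P\lrbracket{w^{(n)}_{AB}}-P\lrbracket{x^{(n)}_{AB}}}=\lrvert{\langle w^{(n)}_{AB}-x^{(n)}_{AB},\,P\rangle}\le\lVert P\rVert_{1_{K_A\tmax K_B}}\,\frac{2c(A,B)}{\sqrt{n}},
\end{align}
so $P\lrbracket{w^{(n)}_{AB}}\le p^{(n)}+\lVert P\rVert_{1_{K_A\tmax K_B}}\frac{2c(A,B)}{\sqrt{n}}\le p^*+\lVert P\rVert_{1_{K_A\tmax K_B}}\frac{2c(A,B)}{\sqrt{n}}$; together with $P\lrbracket{w^{(n)}_{AB}}\ge p^*$ this is the asserted estimate, and $n\to\infty$ gives $P\lrbracket{w^{(n)}_{AB}}\to p^*$. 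I expect the main obstacle to be the termwise feasibility verification: one has to track precisely how $M_{z^m}$ and the various partial traces commute with the local maps $F_A,F_B,G_A,G_B$ and confirm that dividing by $\operatorname{tr}[M_{z^m}(y^{(n)}_{AB^n})]$ preserves equalities and inequalities alike (this is exactly where locality of the constraints, rather than the general form $\hat{F}_{\hookrightarrow}$, is used); everything else is a direct transcription of \autoref{thm:main_theorem} and \autoref{thm:convergence_hierarchy}.
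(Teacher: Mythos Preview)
Your proposal is correct and follows essentially the same approach as the paper: construct $w^{(n)}_{AB}$ as the de Finetti representative from \autoref{thm:main_theorem} applied to the optimal extension $y^{(n)}_{AB^n}$, read off the norm bound, verify termwise feasibility via the constraint-propagation argument of \autoref{thm:convergence_hierarchy}, and close the sandwich $p^{(n)}\le p^*\le P(w^{(n)}_{AB})$ with H\"older. The paper's proof additionally remarks that, since the pigeonhole index $m$ is not known a priori, one may iterate over all $m$ (and, for each $m$, pick the best outcome $z^m$ rather than the full mixture) to obtain an explicitly constructible feasible point with the same bound; your ``deterministic choice of $m$'' is fine in principle but glosses over this practical point.
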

\begin{proof}
Following the proof strategy of \autoref{thm:main_theorem}, apply an informationally complete measurement on $y^{(n)}_{AB^n}$ yielding
\begin{align}\label{eq:sigma_inner_cone_def}
   w^{(n)}_{AB} \coloneqq \sum_{z^{m}}p(z^{m}) \, x_{A \vert z^{m}} \otimes x_{B_{m+1}\vert z^{m}}.
\end{align}
Then, $w^{(n)}_{AB}$ is an inner point by construction, i.e.\ $w^{(n)}_{AB}\in\Sigma^n(A:B)$ for every $n$. Moreover, by permutation invariance and \autoref{thm:convergence_hierarchy}, $x^{(n)}_{AB}$ satisfies
\begin{align}\label{eq:proof_inner_bound_estimate_cone}
        \left\lVert w^{(n)}_{AB} - x^{(n)}_{AB}\right\rVert_{*, 1_{K_A\tmax K_B}}
        \leq \frac{2 c(A,B)}{\sqrt{n}}.
\end{align}
Now we consider the corresponding objective function value $P(w^{(n)}_{AB})$ in comparison to an $n$-level relaxation. Denote by $p^{(n)} \coloneqq P(x^{(n)}_{AB})$ the optimum of the $n$-th level outer relaxation and by $p^*$ the true optimum of \autoref{eq:optimization_for_local_constraints}. By the infima, $p^{(n)} \leq p^*$ and $P(w^{(n)}_{AB}) \geq p^*$. By arguments analogous to those in the proof of \autoref{thm:convergence_hierarchy}, we obtain
\begin{equation} \label{eq:estimate_bound_inner_proof_objective_cone}
\begin{aligned}
    0\leq P\lrbracket{w^{(n)}_{AB}}- p^*&\leq P\lrbracket{z^{(n)}_{AB}} - p^{(n)} \\
    &=P\lrbracket{w^{(n)}_{AB}} - P \lrbracket{x^{(n)}_{AB}}\\
    &= \lvert\langle w^{(n)}_{AB} - x^{(n)}_{AB},\, P\rangle\rvert\\
    &\leq \Vert P\Vert_{1_{K_A\tmax K_B}}\ \Vert w^{(n)}_{AB} - x^{(n)}_{AB}\Vert_{\star, 1_{K_A\tmax K_B}}\\
    &\leq \Vert P\Vert_{1_{K_A\tmax K_B}}\frac{2 c(A,B)}{\sqrt{n}}.
\end{aligned}
\end{equation}
For a given $m$, consider maximizing over outcomes
\begin{align}
    \max_{z^{m}}\, P\lrbracket{ x_{A \vert z^{m}} \otimes x_{B_{m+1}\vert z^{m}}}.
\end{align}
By permutation invariance, each candidate $x_{A \vert z^{m}} \otimes x_{B\vert z^{m}}$ is a feasible point by construction, and hence so is any maximizer. The derivation of \autoref{eq:estimate_bound_inner_proof_objective_cone} is unchanged, so the same estimate applies to the maximizing choice. Since the $m$ corresponding to the de Finetti candidate is a priori unknown, we iterate through all possible values of $m$ in order to get the desired bound. Thus, we constructed a sequence of values converging from above towards $p^*$.
\end{proof}

\subsection{Representability}

Solving \autoref{eq:bivariate_problem} or \autoref{eq:final_optimization_for_linear_constraints}, even approximately, may require optimization over highly complex convex bodies, for which strong hardness results are known (cf., e.g., \cite{gharibian2009strongnphardnessquantumseparability}). A common strategy to mitigate this difficulty is to represent such convex bodies as projections of higher-dimensional objects, most notably cones, which can in some cases substantially reduce the complexity of polynomial optimization problems (cf.\ \cite{THOMAS2019}). Nevertheless, for a general state space $K$, the cone it generates, $\operatorname{cone}(K)$, may itself admit no tractable explicit description, so that this projective viewpoint need not yield an algorithmic advantage. Canonical examples are the copositive cone and its dual, the completely positive cone (cf., e.g., \cite[Defs.\ 7.1.2 and 7.1.4]{Grtner2012}). Thus, following \cite{THOMAS2019}, we formalize this concept in order to identify those instances where simplifications are viable.\\

Let $C\subseteq \mathbb{R}^m$ be a proper cone and $K \subseteq V$ a full-dimensional convex body in a finite-dimensional vector space $V$. A $C$-\emph{lift} of $K$ \cite[Def.\ 2.1]{THOMAS2019} consists of an affine subspace $L \subseteq \mathbb{R}^m$ and a linear map $T:\mathbb{R}^m\to V$ such that
    \begin{align}
        K = T(C \cap L).
    \end{align}
Equivalently, $K$ is the linear image of an affine slice $C \cap L$ of the cone $C$. A particularly important class of $C$-lifts arises when $C$ is the positive semidefinite cone. In this case, $C \cap L$ is a spectrahedron (an affine section of $C$), and its linear image $T(C \cap L)$ is a spectrahedral shadow (a projected spectrahedron). Spectrahedral shadows coincide with feasible regions of semidefinite programs (SDPs) (cf., e.g., \cite{THOMAS2019}).\\

To relate this framework to the bivariate polynomial optimization problem in \autoref{eq:final_optimization_for_linear_constraints}, we henceforth assume that 
$K_A\subseteq V_A$ and $K_B\subseteq V_B$ admit cone lifts. Concretely, there exist finite-dimensional vector spaces $W_A$ and $W_B$, proper cones $C_A \subseteq W_A$ and $C_B \subseteq W_B$, affine subspaces $L_A \subseteq W_A$ and $L_B \subseteq W_B$, and linear maps $T_A: W_A \to V_A$ and $T_B: W_B \to V_B$ such that 
\begin{align}\label{eqn:lift_data}
    K_A = T_A\lrbracket{C_A \cap L_A} \quad \text{and} \quad K_B = T_B\lrbracket{C_B \cap L_B}.  
\end{align}
Writing the affine subspaces as 
\begin{align}
    L_A=\left\lbrace y_A\in W_A\, \vert\, R_A(y_A)=r_A\right\rbrace,\quad \left\lbrace y_B\in W_B\, \vert\, R_B(y_B)=r_B\right\rbrace
\end{align}
for finite-dimensional vector spaces $U_A$, $U_B$, linear maps $R_A\,:\,W_A\rightarrow U_A$, $R_B\,:\,W_B\rightarrow U_B$ and vectors $r_A\in U_A$, $r_B\in U_B$, the lift assumption become
\begin{align}\label{eqn:lift_data_rewritten}
    K_A = T_A\lrbracket{C_A \cap R_A^{-1}(r_A)} \quad \text{and} \quad K_B = T_B\lrbracket{C_B \cap R_B^{-1}(r_B)}.
\end{align}
Thus, we can rewrite the optimization from \autoref{eq:final_optimization_for_linear_constraints} into a conic program.

\begin{proposition}\label{prop:lifted_conic_program}
   Let $K_A$ and $K_B$ be convex bodies. With the notation and standing assumptions of the preceding paragraph, assume that $K_A$ and $K_B$ admit conic lifts $K_A = T_A(C_A \cap L_A)$ and $K_B = T_B(C_B \cap L_B)$. Consider the cost functional $P$ in \autoref{eq:final_optimization_for_linear_constraints} and define the pulled-back cost functional $\tilde{P}\in (W_A\otimes W_B)^\star$ along $T_A\otimes T_B$ by 
   \begin{align}
       \tilde{P}\lrbracket{y_A\otimes y_B}\coloneqq P\lrbracket{T_A(y_A)\otimes T_B(y_B)},\quad y_A\in W_A,\, y_B\in W_B,
   \end{align}
   extended linearly to all of $W_A\otimes W_B$. Then the polynomial optimization problem in \autoref{eq:final_optimization_for_linear_constraints} is equivalent to the lifted conic program 
   \begin{equation}\label{eq:lifted_problem}
        \begin{aligned}
            p^{*}= \inf_{(y_A,y_B) \in C_A \times C_B} \ & \tilde{P} \lrbracket{y_A \otimes y_B}\\
            \operatorname{s.t.}\quad
            &F_A\lrbracket{T_A(y_A)}=0, \quad F_B\lrbracket{T_B(y_B)}=0,\\
            &G_A\lrbracket{T_A(y_A)}\geq 0,\quad G_B\lrbracket{T_B(y_B)}\geq 0,\\
            &R_A(y_A)=r_A,\quad R_B(y_B)=r_B.
        \end{aligned}
    \end{equation}
\end{proposition}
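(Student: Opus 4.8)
The plan is to build an explicit value-preserving correspondence between the feasible sets of \eqref{eq:final_optimization_for_linear_constraints} and \eqref{eq:lifted_problem}, from which the equality of the two optimal values is immediate.

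First I would record two elementary observations. (i) Since $T_A$ and $T_B$ are linear, the tensor map $T_A\otimes T_B\colon W_A\otimes W_B\to V_A\otimes V_B$ exists by the universal property of the tensor product, and the pulled-back cost is simply the composition $\tilde{P}=P\circ(T_A\otimes T_B)$; in particular $\tilde{P}$ is a well-defined element of $(W_A\otimes W_B)^\star$ with $\tilde{P}(y_A\otimes y_B)=P(T_A(y_A)\otimes T_B(y_B))$ for all $y_A\in W_A$, $y_B\in W_B$, which matches the definition in the statement. (ii) Writing $L_A=R_A^{-1}(r_A)$ and $L_B=R_B^{-1}(r_B)$, the conditions ``$(y_A,y_B)\in C_A\times C_B$, $R_A(y_A)=r_A$, $R_B(y_B)=r_B$'' appearing in \eqref{eq:lifted_problem} are exactly ``$y_A\in C_A\cap L_A$ and $y_B\in C_B\cap L_B$'', to which are added the pushed-forward constraints $F_A(T_A(y_A))=0$, $F_B(T_B(y_B))=0$, $G_A(T_A(y_A))\geq 0$, $G_B(T_B(y_B))\geq 0$.

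Next I would verify both inclusions of feasible objective values. For the inequality that the optimum of \eqref{eq:lifted_problem} is at most $p^*$: given any $(x_A,x_B)\in K_A\times K_B$ feasible for \eqref{eq:final_optimization_for_linear_constraints}, the lift hypothesis $K_A=T_A(C_A\cap L_A)$ furnishes $y_A\in C_A\cap L_A$ with $T_A(y_A)=x_A$, and likewise $y_B\in C_B\cap L_B$ with $T_B(y_B)=x_B$; by observation (ii) the pair $(y_A,y_B)$ is feasible for \eqref{eq:lifted_problem}, since $F_A(T_A(y_A))=F_A(x_A)=0$ and similarly for the other three constraints, while $\tilde{P}(y_A\otimes y_B)=P(x_A\otimes x_B)$. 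For the reverse inequality: given $(y_A,y_B)$ feasible for \eqref{eq:lifted_problem}, set $x_A\coloneqq T_A(y_A)$ and $x_B\coloneqq T_B(y_B)$; then $x_A\in T_A(C_A\cap L_A)=K_A$ and $x_B\in K_B$, the four constraints of \eqref{eq:final_optimization_for_linear_constraints} are exactly the pushed-forward constraints, and the objective value is again preserved. Hence the two programs have the same set of attainable objective values and therefore the same optimum $p^*$; if one feasible set is empty the correspondence shows the other is too, and both values equal $+\infty$ by convention.

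This is essentially a bookkeeping argument, so I do not expect a serious obstacle; the only points warranting a moment of care are the well-definedness of $\tilde{P}$ (handled by the universal property, as above) and the fact that the descent map $(y_A,y_B)\mapsto(T_A(y_A),T_B(y_B))$ both lands in and surjects onto the feasible set of \eqref{eq:final_optimization_for_linear_constraints} --- which is precisely the surjectivity built into the notion of a $C$-lift, $K_A=T_A(C_A\cap L_A)$. One should also note that, in contrast to \eqref{eq:final_optimization_for_linear_constraints}, whose feasible region is a closed subset of the compact set $K_A\times K_B$ so that the minimum is attained, the affine slices $C_A\cap L_A$ and $C_B\cap L_B$ need not be bounded; hence the lifted program is genuinely an infimum, but this does not affect the equality of the two optimal values just established.
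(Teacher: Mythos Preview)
Your argument is correct and essentially identical to the paper's: both establish a value-preserving correspondence between the feasible sets by lifting via the surjectivity $K_A=T_A(C_A\cap L_A)$ and descending via $(y_A,y_B)\mapsto(T_A(y_A),T_B(y_B))$, then observe that $\tilde P(y_A\otimes y_B)=P(x_A\otimes x_B)$. Your additional remarks on the well-definedness of $\tilde P$ via the universal property and on the infimum versus minimum are sound and slightly more thorough than the paper's version.
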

\begin{proof}
     Define the lifted feasible sets
    \begin{align}
        \widetilde{C}_A \coloneqq \Big\{y_A\in C_A \;\big|\; R_A(y_A)=r_A,\; F_A\lrbracket{T_A(y_A)}=0,\; G_A\lrbracket{T_A(y_A)}\geq 0\Big\},
    \end{align}
    \begin{align}
        \widetilde{C}_B \coloneqq \Big\{y_B\in C_B \;\big|\; R_B(y_B)=r_B,\; F_B\lrbracket{T_B(y_B)}=0,\; G_B\lrbracket{T_B(y_B)}\ge 0\Big\}.
    \end{align}
    If $y_A\in  \widetilde{C}_A$, then $y_A\in C_A\cap L_A$, hence $x_A\coloneqq T_A(y_A)\in T_A\lrbracket{C_A\cap L_A}$, and by construction $x_A$ satisfies the local constraints $F_A(x_A)=0$ and $G_A(x_A)\ge 0$. Thus $x_A$ is feasible for \autoref{eq:final_optimization_for_linear_constraints}. The same holds for $x_B\coloneqq T_B(y_B)$ with $y_B\in \widetilde{C}_B$. Conversely, if $x_A$ is feasible in \autoref{eq:final_optimization_for_linear_constraints}, then $x_A\in K_A=T_A\lrbracket{C_A\cap L_A}$, so there exists $y_A\in C_A\cap L_A$ with $T_A(y_A)=x_A$, i.e.\ $R_A(y_A)=r_A$. Since the constraints of \autoref{eq:final_optimization_for_linear_constraints} are imposed on $x_A$ and $x_A=T_A(y_A)$, this $y_A$ also satisfies $F_A\lrbracket{T_A(y_A)}=0$ and $G_A\lrbracket{T_A(y_A)}\geq 0$, hence $y_A\in  \widetilde{C}_A$. Likewise for the $B$-system. Therefore, feasible pairs $(x_A, x_B)$ of \autoref{eq:final_optimization_for_linear_constraints} are in bijective correspondence
    (up to the usual non-uniqueness of lifts) with feasible points $(y_A,y_B)$ of the lifted program \autoref{eq:lifted_problem} via 
    \begin{align}
        x_A = T_A(y_A),\qquad x_B=T_B(y_B).
    \end{align}
    Finally, by definition of $\tilde{P}$,
    \begin{align}
         \tilde{P}\lrbracket{ y_A\otimes y_B}=P \lrbracket{T_A(y_A)\otimes T_B(y_B)}= P\lrbracket{x_A\otimes x_B}, 
    \end{align}
    So the two optimization problems have the same set of attainable objective values, hence the same infimum $p^*$.    
\end{proof}

A first takeaway from \autoref{prop:lifted_conic_program} is that the complexity of the lifted formulation \autoref{eq:lifted_problem} is governed not only by the polynomial objective, but also by the difficulty of describing (and optimizing over) the lifting cones $C_A$ and $C_B$. If, for instance, $C_A$ and $C_B$ are polyhedral cones, cones of symmetric (or Hermitian) matrices, or cones generated by simplices, then the conic side is comparatively benign: the remaining difficulty lies in the polynomial optimization itself, either because the relevant cones are nuclear in the sense of \cite{Aubrun_2021}, or because the problem can be addressed via the hierarchy in \autoref{thm:convergence_hierarchy}. In contrast, for general lifting cones one must also account for the intrinsic computational and descriptive hardness of the cones $C_A$ and $C_B$ themselves. We conclude this subsection with a second observation concerning the degrees of freedom in the lifting data. Polynomial optimization problems of the form \autoref{eq:final_optimization_for_linear_constraints} are studied in \cite{Berta2021,zeiss2025approximatingfixedsizequantum,kossmann2025symmetric}, where the lifting cones are taken to be $C_A=C_B=\mathbb{S}^m$ (the space of Hermitian, respectively symmetric, matrices) together with linear constraints. In this setting, one might initially view the lift as trivial: taking $L_A=W_A$, $L_B=W_B$, and $T_A$, $T_B$ as identity maps. However, \cite[Lem.\ 3.1]{zeiss2025approximatingfixedsizequantum} exhibits additional constraints that arise naturally from the underlying physical model. In the present framework, these constraints are captured by a proper subspace $L_A\subsetneq W_A$, namely the fixed-point subspace of the (positive semidefinite) cone under a quantum channel, and therefore correspond to an additional linear map $R_A$ as in \autoref{prop:lifted_conic_program}.

\section{Games}

In the following, consider state spaces $K$, $K_A$ and $K_B$. For any $d\in \mathbb{N}$, we abbreviate $\lbrace 1,2,\ldots, d\rbrace$ by $[d]$. Fix finite question sets $\mathcal{X},\mathcal{Y}$ and finite answer sets $\mathcal{A},\mathcal{B}$. A \emph{two-player non-local game} $G$ is specified by a distribution $\pi$ on $\Xcal\times\Ycal$ and a verification predicate 
\begin{align}
    V\,:\, \Acal\times \Bcal\times \Xcal\times \Ycal \rightarrow \lbrace 0,1\rbrace.
\end{align}
In a single round, the referee samples\footnote{Ignoring $\pi(x,y)=0$ instances.} $(x,y)\sim \pi$, sends $x$ to Alice and $y$ to Bob, receives answer $a\in \Acal$ and $b\in \Bcal$, and declares the players win if and only if $V(a,b,x,y,)=1$. The game is called \emph{free} if $\pi$ is a product distribution, i.e., 
\begin{align}
    \pi(x,y)=\pi_{\Xcal}(x)\pi_{\Ycal}(y) \quad x\in \Xcal, y\in \Ycal
\end{align}
for some probability distributions $\pi_{\Xcal}$ on $\Xcal$ and $\pi_\Ycal$ on $\Ycal$. In the GPT setting, a \emph{strategy} consists of 
\begin{itemize}
    \item a shared state $w_{AB}\in K_A\tmax K_B$
    \item for each question $x\in \Xcal$ (resp.\ $y\in \Ycal$) a complete measurement
    \begin{align}
        \mathcal{M}^{x}_A=\lbrace m_{a\vert x} \rbrace_{a\in A}\subseteq E(K_A),\quad \mathcal{N}^{y}_B=\lbrace n_{b\vert y} \rbrace_{b\in B}\subseteq E(K_B),
    \end{align}
    meaning
    \begin{align}
        m_{a\vert x}\geq 0,\, \sum_{a\in \Acal}m_{a\vert x}=1_{K_A},\quad n_{b\vert x}\geq 0,\, \sum_{b\in \Bcal}n_{b\vert y}=1_{K_B}.
    \end{align}
\end{itemize}
Following \cite[Def.\ 3.9]{achenbach2025factorizationmultimetersunifiedview}, a \emph{multimeter} on $K_A$ with $\lvert \Xcal\rvert$ many measurements $\mathcal{M}^{x}_A$ each with $\lvert \Acal\rvert$ many outcomes is described by a channel $\mathcal{M}_A\,:\, K_A\rightarrow CS^1_{\lvert \Acal\rvert, \lvert \Xcal\rvert}$ where the polysimplex $CS^1_{\lvert \Acal\rvert, \lvert \Xcal\rvert}$ denotes the set of column-stochastic $\lvert \Acal\rvert\times \lvert \Xcal\rvert$ matrices\footnote{I.e.\ nonnegative $\lvert \Acal\rvert\times \lvert \Xcal\rvert$-matrices whose columns sum
to one. Note that $CS^1_{\lvert \Acal\rvert, 1}$ is isomorphic to $\Delta_{\lvert \Acal\rvert}$.}. We use the analogous notation $\mathcal{N}_B\,:\, K_B\rightarrow CS^1_{\lvert \Bcal\rvert, \lvert \Ycal\rvert}$ for Bob's multimeter. Let $CS^+_{\lrvert{\Acal}, \lrvert{\Xcal}}$ denote the cone with base $CS^1_{\lrvert{\Acal}, \lrvert{\Xcal}}$ and $CS_{\lrvert{\Acal}, \lrvert{\Xcal}}$ the corresponding real ordered vector space. Then, as a positive map between cones $\mathcal{M}_A\,:\, A(K)^{\star +}\rightarrow CS^+_{\lrvert{\Acal}, \lrvert{\Xcal}}$, and we can identify $\mathcal{M}_A$ with a tensor in $A(K)^+\tmax CS^+_{\lrvert{\Acal}, \lrvert{\Xcal}}$. We call a multimeter \emph{complete}, if each of its underlying measurements is complete. In what follows, we work exclusively with such complete multimeters. Any strategy $\lrbracket{w_{AB}, \mathcal{M}_A, \mathcal{N}_B}$ induces a conditional distribution
\begin{align}
    p(a,b\vert x,y)=\lrbracket{m_{a\vert x}\otimes n_{b\vert y}}\lrbracket{w_{AB}}, \quad a\in \Acal, b\in \Bcal, x\in \Xcal, y\in \Ycal,
\end{align}
which for each $(x, y)\in\Xcal\times\Ycal$ is a bona fide probability distribution since 
\begin{align}
    \sum_{a,b}p(a,b\vert x,y)=\lrbracket{1_{K_A}\otimes 1_{K_B}}(w_{AB})=1.
\end{align}
Moreover, 
\begin{align}
\lrbracket{\mathcal{M}_A \otimes \mathcal{N}_B}\lrbracket{w_{AB}}\in CS^1_{\lrvert{\Acal}, \lrvert{\Xcal}}\tmax CS^1_{\lrvert{\Bcal}, \lrvert{\Ycal}}.    
\end{align}
Writing the game coefficients as $g_{a,b,x,y} \coloneqq \pi(x,y)V(a,b,x,y)$, the \emph{strategy value} is given by
\begin{align}
    \operatorname{Val}\lrbracket{w_{AB}, \mathcal{M}_A, \mathcal{N}_B}=\sum_{x,y}\sum_{a,b}g_{a,b,x,y}\, p(a,b\vert x,y).
\end{align}
Finally, the optimal GPT value of the game is
\begin{align}
    \beta^*(G)=\sup\lbrace \operatorname{Val}\lrbracket{w_{AB}, \mathcal{M}_A, \mathcal{N}_B}\, : \,  w_{AB}\in K_A\tmax K_B,\, \mathcal{M}_A,\, \mathcal{N}_B\rbrace.
\end{align}
That is, $\beta^*(G)$ is the supremum of the achieved game value over all GPT strategies: an arbitrary shared state $w_{AB}$ in the maximal tensor product and arbitrary local complete multimeters for Alice and Bob.\\

By \cite[Def.\ 5.4]{achenbach2025factorizationmultimetersunifiedview}, an \emph{assemblage} is a set $\left\lbrace q_{a\vert x},  \rho_{a\vert x} \right\rbrace_{a\in \Acal, x\in \Xcal}$ of states $\rho_{a\vert x}\in K$ and conditional probability distributions $\lrbracket{q_{a\vert x}}_{a\in \Acal}$ for all $x\in \Xcal$ together with an \emph{averaged state} $\Bar{\rho}\in K$ such that
\begin{align}
    \Bar{\rho}=\sum_{a\in \Acal} q_{a\vert x} \rho_{a\vert x},\quad  \forall x\in \Xcal.
\end{align}
For any such assemblage we may equivalently write $\left\lbrace  \tilde{\rho}_{a\vert x}\right\rbrace_{a\in \Acal, x\in \Xcal}$ with \emph{assemblage elements}
\begin{align}
     \tilde{\rho}_{a\vert x} \coloneqq  q_{a\vert x}\rho_{a\vert x}\in A(K)^{\star +}, \quad q_{a\vert x} \coloneqq  1_{K}\lrbracket{\tilde{\rho}_{a\vert x}}\in [0,1].
\end{align}
So an assemblage can be represented either as a family of subnormalized states or as the corresponding tuple of weights and normalized states, see \cite[Sec.\ 5]{achenbach2025factorizationmultimetersunifiedview}. Moreover, by \cite[Thm.\ 4]{Jenov2018} assemblages are equivalent to tensors in $CS^1_{\lvert \Acal\rvert, \lvert \Xcal\rvert}\tmax K$. See \cite{achenbach2025factorizationmultimetersunifiedview} for a condition on assemblages admitting a local hidden state model.\\

We have the following proposition.
\begin{proposition}[Assemblage relaxation upper-bounds the GPT value]\label{prop:GPT_Assemblage_Game_separation}
Fix finite question sets $\Xcal, \Ycal$ and finite answer sets $\Acal , \Bcal $. For any two-player non-local game $G=(V, \pi)$, let $\beta^*(G)$ be the optimal GPT value over strategies $\lrbracket{w_{AB}, \mathcal{M}_A, \mathcal{M}_B}$ with $w_{AB}\in K_A\tmax K_B$ and complete multimeters $\mathcal{M}_A, \mathcal{N}_B$. Consider
\begin{align}\label{eqn:game_assemblage_formulation}
\begin{split}
    \gamma^*(G) \coloneqq \sup_{\tilde{\rho}_{a\vert x}, n_{b\vert y}}\quad &\sum_{a,b,x,y} g_{a,b,x,y} n_{b\vert y}\lrbracket{\tilde{\rho}_{a\vert x}}\\
    \operatorname{s.t.}\quad & \tilde{\rho}_{a\vert x}\in A(K_B)^{\star +},\\
    & \sum_{a\in A}\tilde{\rho}_{a\vert x}= \sum_{a\in A}\tilde{\rho}_{a\vert x'}\in K_B,\quad \forall x,x'\in \Xcal,\\
    & n_{b\vert y}\in E(K_B),\quad \forall b\in \Bcal, y\in \Ycal,\\
    & \sum_{b\in B}n_{b\vert y}=1_{K_B},\quad \forall y\in \Ycal.
\end{split}
\end{align}
Then\footnote{Observe that $\beta^*(G)$ optimizes over $K_A\tmax K_B$, while $\gamma^*(G)$ is restricted to $K_B$.}, $\beta^*(G)\leq \gamma^*(G).$
\end{proposition}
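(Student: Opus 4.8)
\emph{Proof plan.} Since the statement is a one-sided relaxation bound, the plan is to show that every GPT strategy $\lrbracket{w_{AB},\mathcal{M}_A,\mathcal{N}_B}$ entering the definition of $\beta^*(G)$ induces a point that is feasible for the assemblage program \autoref{eqn:game_assemblage_formulation} and has the \emph{same} objective value; taking suprema then gives $\beta^*(G)\le\gamma^*(G)$. The candidate is obtained by evaluating $w_{AB}$ against Alice's effects on the $A$-leg only: set
\begin{align}
    \tilde{\rho}_{a\vert x}\coloneqq \lrbracket{m_{a\vert x}\otimes\operatorname{id}_B}\lrbracket{w_{AB}},\qquad a\in\Acal,\ x\in\Xcal,
\end{align}
and keep Bob's effects $n_{b\vert y}$ exactly as given; the latter already satisfy $n_{b\vert y}\in E(K_B)$ and $\sum_{b}n_{b\vert y}=1_{K_B}$, so nothing needs to be checked on Bob's side.

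\emph{Feasibility.} Two things require verification. First, positivity: $\tilde{\rho}_{a\vert x}\in A(K_B)^{\star+}$. This is where the definition of the maximal tensor product does the work. Realizing $K_A\tmax K_B$ inside $A(K_A)^{\star+}\tmax A(K_B)^{\star+}$ and using $\lrbracket{A(K_A)^{\star+}}^\star=A(K_A)^{+}$, the defining property of $\tmax$ gives $\lrbracket{f_A\otimes g_B}\lrbracket{w_{AB}}\ge 0$ for all $f_A\in A(K_A)^{+}$ and $g_B\in A(K_B)^{+}$. Since $m_{a\vert x}\in E(K_A)\subseteq A(K_A)^{+}$, it follows that $g_B\lrbracket{\tilde{\rho}_{a\vert x}}=\lrbracket{m_{a\vert x}\otimes g_B}\lrbracket{w_{AB}}\ge 0$ for every $g_B\in A(K_B)^{+}$, hence $\tilde{\rho}_{a\vert x}\in\lrbracket{A(K_B)^{+}}^\star=A(K_B)^{\star+}$, invoking the bidual identification $A(K_B)^{\star\star}\simeq A(K_B)$. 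Second, the consistency (no-signalling) constraint: from $\sum_{a\in\Acal}m_{a\vert x}=1_{K_A}$ one gets
\begin{align}
    \sum_{a\in\Acal}\tilde{\rho}_{a\vert x}=\lrbracket{1_{K_A}\otimes\operatorname{id}_B}\lrbracket{w_{AB}}\eqqcolon w_B,
\end{align}
which is independent of $x$, and $w_B\in K_B$ because the partial trace is a channel $K_A\tmax K_B\to K_B$ and $w_{AB}$ is normalized. Thus $\lrbracket{\tilde{\rho}_{a\vert x},n_{b\vert y}}$ is feasible for \autoref{eqn:game_assemblage_formulation}.

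\emph{Matching the objective and concluding.} Using $p(a,b\vert x,y)=\lrbracket{m_{a\vert x}\otimes n_{b\vert y}}\lrbracket{w_{AB}}$ together with the definition of $\tilde{\rho}_{a\vert x}$,
\begin{align}
    \sum_{a,b,x,y}g_{a,b,x,y}\,n_{b\vert y}\lrbracket{\tilde{\rho}_{a\vert x}}=\sum_{a,b,x,y}g_{a,b,x,y}\,\lrbracket{m_{a\vert x}\otimes n_{b\vert y}}\lrbracket{w_{AB}}=\operatorname{Val}\lrbracket{w_{AB},\mathcal{M}_A,\mathcal{N}_B},
\end{align}
so $\gamma^*(G)$ is at least as large as any strategy value; passing to the supremum over $\lrbracket{w_{AB},\mathcal{M}_A,\mathcal{N}_B}$ yields $\gamma^*(G)\ge\beta^*(G)$. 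I expect the only step that is not pure bookkeeping to be the positivity claim $\tilde{\rho}_{a\vert x}\in A(K_B)^{\star+}$: it amounts to reading $\tilde{\rho}_{a\vert x}$ as a partial evaluation of a $\tmax$-state against a single positive effect, legitimized precisely by the characterization of $\tmax$ as the largest cone on which positive product functionals stay nonnegative. As a shortcut one may instead cite \cite[Thm.\ 4]{Jenov2018}: $\lrbracket{\MM_A\otimes\operatorname{id}_B}\lrbracket{w_{AB}}\in CS^{+}_{\lvert\Acal\rvert,\lvert\Xcal\rvert}\tmax A(K_B)^{\star+}$ is exactly an assemblage, whose elements are the $\tilde{\rho}_{a\vert x}$ constructed above.
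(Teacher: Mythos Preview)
Your proof is correct and follows essentially the same approach as the paper: you construct the induced assemblage $\tilde{\rho}_{a\vert x}=(m_{a\vert x}\otimes\operatorname{id}_B)(w_{AB})$, verify positivity via the defining property of $\tmax$, check the no-signalling constraint via $\sum_a m_{a\vert x}=1_{K_A}$, and match the objective value. The paper's proof is virtually identical, with your treatment of the positivity step being slightly more explicit.
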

\begin{proof}
Given any strategy $\lrbracket{w_{AB}, \mathcal{M}_A, \mathcal{N}_B}$, fix Alice's measurement family $\mathcal{M}_{A}^x=\lbrace m_{a\vert x}\rbrace_{a\in \Acal}$ and define the \emph{induced assemblage} on B (cf.\ \cite{Jenov2018, Jenov2022, achenbach2025factorizationmultimetersunifiedview}, \autoref{eqn:general_Choi}) as the family of unnormalized conditional states
\begin{align}\label{eqn:subnormalized_assemblages}
    \tilde{\rho}_{a\vert x} \coloneqq  \psi_{w_{AB}}(m_{a\vert x})=\lrbracket{m_{a\vert x}\otimes \operatorname{id}_{K_B}}\lrbracket{w_{AB}}\in A(K_B)^{\star +},\quad a\in \Acal,\, x\in \Xcal.
\end{align}
Equivalently, $\tilde{\rho}_{a\vert x}$ is characterized by the identity
\begin{align}
    f(\tilde{\rho}_{a\vert x})=\lrbracket{m_{a\vert x}\otimes f}(w_{AB}),\quad\forall f\in E(K_B),
\end{align}
which immediately implies positivity: if $f\geq 0$ then $f(\tilde{\rho}_{a\vert x})\geq 0$, hence $\tilde{\rho}_{a\vert x}\in A(K_B)^{\star +}$. Moreover, for every $x\in \Xcal$ the assemblages satisfies the no-signalling (consistency) constraint 
\begin{align}
    \sum_{a\in\Acal}\tilde{\rho}_{a\vert x}=\lrbracket{1_{K_A}\otimes \operatorname{id}_{K_B}}\lrbracket{w_{AB}}=:w_B\in K_B,
\end{align}
i.e.\ the marginal on $B$ is independent of Alice’s input. Finally, in terms of the assemblage one can write the induced correlation as
\begin{align}
    p(a,b\vert x,y)= n_{b\vert y}(\tilde{\rho}_{a\vert x}),
\end{align}
so that Bob’s statistics arise by measuring the conditional assemblages. Thus, $\beta^*(G)\leq \gamma^*(G)$ holds in general. 
\end{proof}
Note that this contrasts sharply with the situation in quantum theory. There, the Gisin-Hughston-Jozsa-Wootters (GHJW) theorem \cite{Gisin1989StochasticQD,Hughston1993} implies that every (no-signaling) assemblage on Bob’s system admits a realization in terms of a shared bipartite state together with a (complete) multimeter (POVMs) on Alice's side; see \cite{Uola2020} for an overview. In particular, optimizing over assemblages is equivalent to optimizing over bipartite states and Alice’s measurements, and hence $\beta^*(G)= \gamma^*(G)$ \cite[Lem.\ 3.1]{zeiss2025approximatingfixedsizequantum}. In general GPTs, this equivalence breaks down: not every assemblage admits such a dilation in terms of a bipartite state and a multimeter \cite{Jenov2018, Barnum2013, Stevens2014} (cf. also \cite[Prop.\ 5.12]{achenbach2025factorizationmultimetersunifiedview} for an explicit example). This failure is tied to structural features absent in generic GPTs (for instance, many theories are not (weakly) self-dual), and additional constraints are required for an assemblage to admit such a realization. See \cite{Barnum2013,Stevens2014} for necessary conditions ensuring that a given assemblage can arise from a bipartite state together with a multimeter. Based on \cite[Prop.\ 5.12]{achenbach2025factorizationmultimetersunifiedview} we conjecture that there exist state spaces (e.g.\ the octahedron) and games $G$ such that $\beta^*(G)<\gamma^*(G)$.\\

Following \cite[Eq.\ 3.9]{achenbach2025factorizationmultimetersunifiedview}, for $\Bar{\sigma}\in\operatorname{relint}(K_B)$, let\footnote{Or, equivalently, $\Bar{\rho}\in\operatorname{relint}(K)$ \cite[Rem.\ 5.6]{achenbach2025factorizationmultimetersunifiedview}.} 
\begin{align}
  \lrbracket{K_B}^\star_{\Bar{\sigma}} \coloneqq \left\lbrace f \in A(K_B)^+\, : \, f(\Bar{\sigma})=1\right\rbrace.
\end{align}
We have the following proposition.
\begin{proposition}
    Fix finite question sets $\Xcal, \Ycal$ and finite answer sets $\Acal, \Bcal$. Consider a game $G=(V, \pi)$. Let $K_B$ be a state space and $\sigma=\left\lbrace \sigma_{a\vert x}\right\rbrace_{a\in \Acal, x\in \Xcal}$ any assemblage such that for every $x\in \Xcal$
    \begin{align}
        \sum_{a\in \Acal}\sigma_{a\vert x}=\Bar{\sigma}\in\operatorname{relint}(K_B).
    \end{align}
    If $K_A\cong\lrbracket{K_B}^\star_{\Bar{\sigma}}$, then $\beta^*(G)= \gamma^*(G)$.
\end{proposition}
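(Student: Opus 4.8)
The bound $\beta^*(G)\le\gamma^*(G)$ is \autoref{prop:GPT_Assemblage_Game_separation}, so it remains to prove $\gamma^*(G)\le\beta^*(G)$. The plan is to show that, under $K_A\cong(K_B)^\star_{\bar\sigma}$, every feasible pair $\lrbracket{\{\tilde\rho_{a\vert x}\},\{n_{b\vert y}\}}$ for $\gamma^*(G)$ is realised by a GPT strategy with the same value. Since the objective is linear in the assemblage and $\{n_{b\vert y}\}$ may be reused verbatim as Bob's multimeter, this reduces to the structural claim that every assemblage $\{\tilde\rho_{a\vert x}\}$ on $K_B$, with common marginal $w_B\coloneqq\sum_a\tilde\rho_{a\vert x}$, is \emph{dilatable}: $\tilde\rho_{a\vert x}=\lrbracket{m_{a\vert x}\otimes\operatorname{id}_{K_B}}(w_{AB})$ for some $w_{AB}\in K_A\tmax K_B$ and some complete multimeter $\mathcal{M}_A=\{m_{a\vert x}\}$ on $K_A$. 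Indeed, in that case
\begin{align*}
 \operatorname{Val}\lrbracket{w_{AB},\mathcal{M}_A,\mathcal{N}_B}=\sum_{a,b,x,y}g_{a,b,x,y}\,n_{b\vert y}\lrbracket{(m_{a\vert x}\otimes\operatorname{id}_{K_B})(w_{AB})}=\sum_{a,b,x,y}g_{a,b,x,y}\,n_{b\vert y}(\tilde\rho_{a\vert x}),
\end{align*}
which is the $\gamma^*$-value of the chosen pair; taking suprema then gives $\gamma^*(G)\le\beta^*(G)$.

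To build the dilation I would work in the Choi picture used in the proof of \autoref{prop:uniform_A_upper_bound_mutual_information}. Fix an affine isomorphism $\theta\colon K_A\to(K_B)^\star_{\bar\sigma}$ and dualise it to obtain the dictionary: (i) $\theta$ identifies $1_{K_A}$ with $\bar\sigma$ and $E(K_A)$ with $\{z\in A(K_B)^{\star +}:z\le\bar\sigma\}$, so a complete multimeter on $K_A$ is exactly a family $\{m_{a\vert x}\}\subseteq A(K_B)^{\star +}$ with $\sum_a m_{a\vert x}=\bar\sigma$ for each $x$ — a decomposition of $\bar\sigma$, equivalently an assemblage on $K_B$ with marginal $\bar\sigma$; (ii) through $\theta$ and \autoref{eqn:general_Choi}, a state $w_{AB}\in K_A\tmax K_B$ is exactly a positive map $\psi\colon A(K_B)^{\star +}\to A(K_B)^{\star +}$ with $\psi(\bar\sigma)\in K_B$ (this encodes $\langle w_{AB},1_{K_A}\otimes 1_{K_B}\rangle=1$), and the assemblage induced by $(w_{AB},\mathcal{M}_A)$ is $\tilde\rho_{a\vert x}=\psi(m_{a\vert x})$. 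Hence dilatability of $\{\tilde\rho_{a\vert x}\}$ becomes: there exist a \emph{positive} map $\psi\colon A(K_B)^{\star +}\to A(K_B)^{\star +}$ with $\psi(\bar\sigma)=w_B$ and a decomposition $\{m_{a\vert x}\}$ of $\bar\sigma$ with $\psi(m_{a\vert x})=\tilde\rho_{a\vert x}$.

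If $w_B=\bar\sigma$ this is immediate: take $\psi=\operatorname{id}$ (so $w_{AB}$ is the canonical pairing tensor, which lies in $K_A\tmax K_B$ and is normalised since $1_{K_A}$ corresponds to $\bar\sigma\in K_B$) and $m_{a\vert x}\coloneqq\tilde\rho_{a\vert x}$, a legitimate effect family because $\tilde\rho_{a\vert x}\le\sum_{a'}\tilde\rho_{a'\vert x}=\bar\sigma$. For general $w_B$, first note that replacing $\tilde\rho_{a\vert x}$ by $(1-\varepsilon)\tilde\rho_{a\vert x}+\varepsilon\langle 1_{K_B},\tilde\rho_{a\vert x}\rangle\tau$ with $\tau\in\operatorname{relint}(K_B)$ fixed moves the marginal into $\operatorname{relint}(K_B)$ and perturbs the $\gamma^*$-objective by $O(\varepsilon)$, so we may assume $w_B\in\operatorname{relint}(K_B)$. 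Since also $\bar\sigma\in\operatorname{relint}(K_B)$, \autoref{lemma:cA-orderUnitBound} applied to $K_B$ with reference point $\bar\sigma$ furnishes a finite $\lambda$ with $x\le\lambda\bar\sigma$ for all $x\in K_B$; using $w_B\le\lambda\bar\sigma$ and $\tilde\rho_{a\vert x}\le\lambda\bar\sigma$ one constructs an invertible positive $\psi$ with $\psi(\bar\sigma)=w_B$ whose inverse — positive or not — still maps each $\tilde\rho_{a\vert x}$ back into $A(K_B)^{\star +}$, and sets $m_{a\vert x}\coloneqq\psi^{-1}(\tilde\rho_{a\vert x})$. Then $\sum_a m_{a\vert x}=\psi^{-1}(w_B)=\bar\sigma$ and $m_{a\vert x}\le\bar\sigma$ follow automatically, so $(w_{AB},\mathcal{M}_A)$ realises $\{\tilde\rho_{a\vert x}\}$; combined with the value computation above this gives $\gamma^*(G)\le\beta^*(G)$, hence $\beta^*(G)=\gamma^*(G)$.

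I expect the construction of $\psi$ when $w_B\ne\bar\sigma$ to be the main obstacle: one must produce a linear map $A(K_B)^{\star +}\to A(K_B)^{\star +}$ taking $\bar\sigma$ to $w_B$ that is positive on the \emph{whole} cone — not merely on the finitely many $\tilde\rho_{a\vert x}$ — while still "un-contracting" the assemblage data. This is precisely where the hypothesis $K_A\cong(K_B)^\star_{\bar\sigma}$ is essential (it is what makes the multimeter/state dictionary of step (ii) compatible with the decomposition-of-$\bar\sigma$ structure), and where \autoref{lemma:cA-orderUnitBound} is needed to keep the dilation from blowing up. For state spaces with no compatible dual-base identification — e.g.\ the octahedron with a generic $K_A$ — this step provably fails, consistent with the conjectured strict separation $\beta^*(G)<\gamma^*(G)$ noted above.
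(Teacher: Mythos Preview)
Your overall architecture—$\beta^*(G)\le\gamma^*(G)$ from the earlier proposition, and the reverse by dilating every feasible assemblage to a GPT strategy with the same value—coincides with the paper's. The paper, however, does not reconstruct the dilation: it simply invokes \cite[Prop.~5.7]{achenbach2025factorizationmultimetersunifiedview}, which asserts exactly that under $K_A\cong(K_B)^\star_{\bar\sigma}$ every assemblage with interior average state admits a state-plus-multimeter realisation. Your Choi dictionary (items (i)–(ii)) and your treatment of the case $w_B=\bar\sigma$ are correct and in fact reproduce the core of that cited result: with $\psi=\operatorname{id}$, the canonical pairing tensor is the required bipartite state and the assemblage elements themselves serve as Alice's effects.

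The genuine gap is the case $w_B\neq\bar\sigma$. You assert the existence of an invertible positive $\psi$ with $\psi(\bar\sigma)=w_B$ whose inverse still sends each $\tilde\rho_{a\vert x}$ into $A(K_B)^{\star+}$, but no such map is exhibited, and the claimed properties do not follow from the order bound $\tilde\rho_{a\vert x}\le\lambda\bar\sigma$ alone. Positivity of $\psi$ on the \emph{entire} cone is what places $w_{AB}$ in $K_A\tmax K_B$, and positivity of the preimages $m_{a\vert x}=\psi^{-1}(\tilde\rho_{a\vert x})$ is what makes them effects—neither is automatic, and you yourself flag this as the main obstacle. The cleaner route, implicit in the paper's references (see the footnote invoking \cite[Rem.~5.6]{achenbach2025factorizationmultimetersunifiedview}), is this: after your own perturbation step the marginal $w_B$ lies in $\operatorname{relint}(K_B)$, so $(K_B)^\star_{w_B}$ is again a base of the \emph{same} cone $A(K_B)^+$; the hypothesis $K_A\cong(K_B)^\star_{\bar\sigma}$ together with that remark lets one re-identify $K_A$ with the dual base at $w_B$, whereupon your ``special case'' argument applies verbatim with $w_B$ playing the role of $\bar\sigma$. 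This sidesteps the construction of $\psi$ entirely.
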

\begin{proof}
    By \autoref{prop:GPT_Assemblage_Game_separation}, $\beta^*(G)\leq \gamma^*(G)$. Conversely, by \cite[Prop.\ 5.7]{achenbach2025factorizationmultimetersunifiedview}, $\sigma$ admits a realization in terms of a multimeter $\mathcal{M}_A\,:\, K_A\rightarrow CS^1_{\lrvert{\Acal}, \lrvert{\Xcal}}$ and a state $z\in K_A\tmax K_B$ (cf. also \cite[Lem.\ 3.3]{achenbach2025factorizationmultimetersunifiedview} and \cite[Lem.\ A.1]{Jenov2018}). Thus, $\beta^*(G)\geq  \gamma^*(G)$, concluding the proof.
\end{proof}

We now recast the game-value problem as an instance of our polynomial-optimization framework with local constraints. Let $\lrbrace{e_{a}^{(x)}}_{a,x}$, $\lrbrace{e_{b}^{(y)}}_{b,y}$ be the canonical basis (cf.\ \cite[Sec.\ 3.4, Eq.\ 3.23]{achenbach2025factorizationmultimetersunifiedview}) to $CS_{\lrvert{\Acal},  \lrvert{\Xcal}}$ and $CS_{\lrvert{\Bcal},  \lrvert{\Ycal}}$, respectively. Correspondingly, denote the respective dual bases by $\lrbrace{\epsilon_{a}^{(x)}}_{a,x}$ and $\lrbrace{\epsilon_{b}^{(y)}}_{b,y}$ (cf.\ \cite[Eq.\ 3.22]{achenbach2025factorizationmultimetersunifiedview}). By \cite[Sec.\ 3.4]{achenbach2025factorizationmultimetersunifiedview}, for any $k,g \in \mathbb{N}$, we have
\begin{align}
\begin{split}
    1_{CS^1_{k,g}} \,:\, & CS_{k,g} \rightarrow \mathbb{R}\\
    & M \mapsto \frac{1}{g} \left\langle  J_{k,g}, M\right\rangle_{\operatorname{HS}}
\end{split}
\end{align}
where $CS_{k,g}$ is the real vector space of real $k\times g$-matrices whose column sums are equal, Hilbert-Schmidt inner product $\langle \cdot, \cdot\rangle_{\operatorname{HS}}$ and $J_{k,g}$ the $k\times g$-matrix of all ones. Specifically, for $M\in CS^1_{k,g}$ we have $\sum_{i=1}^k \epsilon_i^{(j)}(M)=1_{CS^1_{k,g}}(M)=1$ for all $j\in [g]$, thus $\sum_{i=1}^k \epsilon_i^{(j)}=1_{CS^1_{k,g}}$ as functionals. We can define a corresponding game functional
\begin{align}
    G_{\Acal\Bcal\Xcal\Ycal}= \sum_{a,b,x,y}\pi(x,y)V(a,b,x,y)\epsilon_{a}^{(x)}\otimes \epsilon_{b}^{(y)}\in \lrbracket{CS^1_{\lrvert{\Acal},  \lrvert{\Xcal}}}^\star \tmax \lrbracket{CS^1_{\lrvert{\Bcal},  \lrvert{\Ycal}}}^\star.
\end{align}
Let $K_B$ be as in \autoref{prop:GPT_Assemblage_Game_separation}. For some $d\in\mathbb{N}$, let $\lbrace e_i \rbrace_{i\in [d]}$ be a basis of $A(K_B)^{\star}$ and let $\lbrace a_j \rbrace_{j\in [d]}\subset A(K_B)$ be the corresponding dual basis. Following \cite[Sec.\ 3.2]{achenbach2025factorizationmultimetersunifiedview} (cf.\ \cite[Lem.\ A.1]{Jenov2018}, \autoref{eqn:Choi_identity}), we can define a special (Choi) tensor 
\begin{align}\label{eqn:general_choi}
    \chi_{A(K_B)^{\star}}=\sum_{i=1}^d a_i\otimes e_i \in A(K_B)\otimes A(K_B)^{\star},
\end{align}
By \cite[Thm.\ 4]{Jenov2018} and the standard correspondence between maps and tensors, the following optimization problem is equivalent to \autoref{eqn:game_assemblage_formulation}.
\begin{align}\label{eqn:game_as_polynomial_optimization_1} 
\begin{split}
    \gamma^*(G)=\sup_{\xi_B,\, N_B} \quad & G_{\Acal\Bcal\Xcal\Ycal}\left[\lrbracket{N_B\otimes \xi_B}\lrbracket{\chi_{A(K_B)^{\star}}}\right]\\
    \operatorname{s.t. } \quad & \xi_B \in CS^1_{\lrvert{\Acal},  \lrvert{\Xcal}}\tmax K_B,\\
    & N_B \in A(K_B)^+\tmax CS^+_{\lrvert{\Bcal},  \lrvert{\Ycal}},\\
    & \lrbracket{\operatorname{id}_{A(K_B)}\otimes 1_{CS^1_{\lrvert{\Bcal},  \lrvert{\Ycal}}}}\lrbracket{N_B}=1_{K_B}.
\end{split}
\end{align}
Note that $N_B$ is element of a convex body, i.e.\ an affine slice of a proper cone. The problem in \autoref{eqn:game_as_polynomial_optimization_1} constitutes a polynomial optimization problem with local constraints. See \autoref{sec:games_in_weakly_self_dual_gpts} for games in weakly self-dual GPTs.\\

\section{Discussion and open problems}\label{sec:discussion_and_open_problems}

\subsection{Discussion} 

In this work we introduced a new de Finetti theorem based on a recently developed integral representation of relative entropy in general probabilistic theories (cf. \autoref{thm:main_theorem}). The central structural ingredient is a monogamy-of-entanglement effect quantified by mutual information, established in \autoref{prop:uniform_A_upper_bound_mutual_information}, together with the observation that the relevant notion of relative entropy satisfies a chain rule after measurement. Conceptually, this yields a quantitative form of monogamy-of-entanglement for general cones, extending the qualitative phenomenon identified in \cite{Aubrun_2024}.

Beyond its conceptual contribution, the de Finetti theorem leads to concrete applications. Within the polynomial optimization framework of \cite{plavala2025polarization}, our approach yields a convergent hierarchy of outer approximations equipped with finite-level de Finetti bounds on the approximation error, even in the presence of inequality constraints. Moreover, the proof technique provides a constructive method for generating feasible points that satisfy local constraints with controllable accuracy, as used in \autoref{thm:convergence_hierarchy}. This constructive aspect may be particularly valuable in practice, as certifying inner feasibility is often difficult in optimization over general convex bodies. 

Finally, since even describing a single cone in finite $\mathbb{R}^n$ can be computationally inefficient, we connect polynomial optimization over convex bodies to the theory of conic lifts. We show that if the cones under consideration admit a $D$-lift from an efficiently describable cone, then the resulting optimization problem can be reformulated at the level of the lifted cones in a manner that preserves the structure of the original problem (cf. \autoref{prop:lifted_conic_program}). Consequently, our framework applies in practical settings whenever such an efficient $D$-lift is available.

\subsection{Outlook}
Building on our work, a number of technical and conceptual challenges arise.
\begin{enumerate}
    \item In this work we observe that our proof technique requires a chain rule for mutual information, at least at the post-measurement level. However, we were unable to establish a chain rule directly at the level of cones. Motivated by this gap, we propose several ideas that may help move toward a resolution. In the following, a \emph{divergence} is a functional on a cone $C$ of the form
    \begin{align}
        D: C \times C \to \mathbb{R}_{\geq 0}, \quad (x,y)\mapsto D(x\Vert y)
    \end{align}
    satisfying $D(x\Vert x)=0$ for all $x \in C$. A structural feature of the Umegaki relative entropy is that it obeys a \emph{Pythagorean property} (cf., e.g., \cite{Weis2014}),
    \begin{align}\label{eq:pythagorean_property}
        D(x_{AB}\Vert x_A \otimes y_B)
        = D(x_{AB} \Vert x_A\otimes x_B) + D(x_B\Vert y_B).
    \end{align}
    from which one readily obtains the chain rule
    \begin{align}
        I(A:BC) = I(A:C) + I(A:B\vert C).
    \end{align}
    Assuming moreover a data-processing inequality, this immediately yields the non-negativity of conditional mutual information,
    \begin{align}
        I(A:B\vert C)\geq 0.
    \end{align}
    It would be interesting to understand to what extent analogous Pythagorean identities (and hence chain rules) can be established directly at the level of cones. 
    \item The proof technique underlying \autoref{thm:convergence_hierarchy} guarantees convergence only for local constraints. Indeed, while one can construct a sequence of candidate points 
    \begin{align}
        x_{AB}^{(n)} \coloneqq \sum_{z^{m(n)}} p(z^{m(n)})\, x_{AB\vert z^{m(n)}},
    \end{align}
    it is currently not clear which classes of constraints are stable under the limiting procedure: after extracting a convergent subsequence of $(x_{AB}^{(n)})$ via compactness, it remains unclear under what assumptions the resulting limit point must still satisfy the desired constraints.
    \item The notion of relative entropy used here was also studied in \cite{kossmann2025optimisingrelativeentropysemidefinite}. Leveraging the approximation techniques developed there, it seems plausible to obtain conic-program approximations of this relative entropy formulated over the underlying GPT (and its dual). Consequently, for GPTs in which conic optimization over convex body $K$ is tractable (e.g., $K$ admits an efficient description and associated cone programs can be solved efficiently, see $C$-lifts \autoref{prop:lifted_conic_program}), the relative entropy should be efficiently approximable.
\end{enumerate}

\section*{Acknowledgements}

GK thanks Mario Berta for pointing out \cite{Berta_2011}. GK thanks Ludovico Lami for a discussion about the paper. GK and JZ acknowledge support from the Excellence Cluster - Matter and Light for Quantum Computing (ML4Q-2) and by the European Research Council (ERC Grant Agreement No. 948139).
MP is thankful to the support from the Niedersächsisches Ministerium für Wissenschaft und Kultur.
R.S.\ is supported  by the DFG under Germany's Excellence Strategy - EXC-2123 QuantumFrontiers - 390837967 and  SFB 1227 (DQ-mat), the Quantum Valley Lower Saxony, and the BMBF projects ATIQ, SEQUIN, Quics and CBQD. 
\newpage
\bibliographystyle{ultimate}
\bibliography{main}


\newpage
\appendix
\addcontentsline{toc}{section}{Supplementary Material}

\addtocontents{toc}{\protect\setcounter{tocdepth}{0}}

\section{Preliminaries on GPTs}\label{sec:preliminaries_on_GPTs}

By \cite[Lem.\ 3.18]{Pl_vala_2023}, $A(K)^{\star+}$ is a proper cone, $\lrbracket{A(K)^{\star +}}^{\star}= A(K)^{+}$ \cite[Prop.\ B.11]{Pl_vala_2023}, and \cite[Prop.\ B.4]{Pl_vala_2023} establishes the canonical vector space isomorphism $A(K)^{\star\star}\simeq A(K)$. Consequently, $K$ admits a natural embedding into $A(K)^{\star +}$. In other words, each state $x\in K$ is a positive linear functional on affine functions. Consider the set
\begin{align}
    K'\coloneqq \left\{\begin{pmatrix}
           x \\
           1 \\
         \end{pmatrix}\, : \, x\in K\right\}\subseteq \operatorname{aff}(K)\oplus \mathbb{R},
\end{align}
which is isomorphic to $K$. Then, since any $f\in A(K)$ can be identified with the ordered pair $(\psi, c)$ with $\psi\coloneqq f-f(0)1_K \in \lrbracket{\operatorname{aff}(K)}^{\star}$, $c\in\mathbb{R}$ and $f(0)=c$ for $0\in \operatorname{aff}(K)$, we have
\begin{align}
    \left\langle\begin{pmatrix}
           x \\
           1 \\
         \end{pmatrix}, (\psi, c)\right\rangle = \psi(x)+c.
\end{align}
In other words, $A(K)\cong \lrbracket{\operatorname{aff}(K)}^*\oplus \mathbb{R}$ as vector spaces. Then, by \cite[Lem.\ 3.20]{Pl_vala_2023}, we have
\begin{align}
    K'= \left\{\phi_x \in A(K)^{\star +}\, : \, \langle \phi_x, 1_K \rangle=1\right\}.
\end{align}
The state space arises as the intersection of a proper cone with an affine hyperplane. Importantly, by \cite[Lem.\ 3.34]{Pl_vala_2023}, $K$ is a base of the proper cone $A(K)^{\star +}$. Because $A(K)^{\star +}$ generates $A(K)^\star$, \cite[Lem.\ 3.35]{Pl_vala_2023} ensures that for every $\phi\in A(K)^\star$ there exist $x,y \in K$ and $\lambda, \mu\in \mathbb{R}_+$ such that 
\begin{align}
    \phi= \lambda x - \mu y.
\end{align}
Thus, any affine map on the state space $K$ induces a linear map on $K'\subseteq A(K)^\star$, which extends by linearity to a linear map on $A(K)^\star$. 

\section{Symmetry action on convex bodies}\label{sec:symmetry_action_on_convex_bodies}

\begin{proposition}
    Let $C\subseteq V$ be a proper cone with associated state space $K$. The symmetric group $S_n$ preserves the $C^{\hat{\otimes}n}$ cone structure.
\end{proposition}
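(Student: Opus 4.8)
The plan is to reduce the statement to a short duality computation. By associativity of the construction recalled in the body, the $n$-fold maximal tensor product is
\[
    C^{\tmax n} \;=\; \bigl(C^{\star\,\tmin n}\bigr)^{\star} \;=\; \bigl\{\, z \in V^{\otimes n} \;:\; (f_1\otimes\cdots\otimes f_n)(z)\geq 0 \ \text{ for all } f_1,\dots,f_n\in C^{\star}\,\bigr\},
\]
where $C^{\star\,\tmin n}=\operatorname{conv}\{f_1\otimes\cdots\otimes f_n : f_i\in C^{\star}\}$ is the $n$-fold minimal tensor product of the (proper) dual cone $C^{\star}$. The symmetric group acts on $V^{\otimes n}$ by the linear maps $S_\sigma(x_1\otimes\cdots\otimes x_n)=x_{\sigma^{-1}(1)}\otimes\cdots\otimes x_{\sigma^{-1}(n)}$, and on $(V^{\star})^{\otimes n}=(V^{\otimes n})^{\star}$ by the same formula. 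The one identity driving everything is $\langle S_\sigma f,\, S_\sigma z\rangle=\langle f,z\rangle$, equivalently $\langle g,\, S_\sigma z\rangle=\langle S_{\sigma^{-1}}g,\, z\rangle$; it is checked on simple tensors, where both sides equal $\prod_i f_i(x_i)$ after reindexing $j=\sigma^{-1}(i)$, and then extended bilinearly. In particular each $S_\sigma$ is a linear automorphism of $V^{\otimes n}$ with inverse $S_{\sigma^{-1}}$.

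Next I would observe that $C^{\star\,\tmin n}$ is manifestly $S_\sigma$-invariant: $S_\sigma$ merely permutes the tensor factors of a product functional $f_1\otimes\cdots\otimes f_n$, and since all $f_i$ are drawn from the same cone $C^{\star}$, the generating set $\{f_1\otimes\cdots\otimes f_n : f_i\in C^{\star}\}$ is mapped bijectively onto itself; hence $S_\sigma\bigl(C^{\star\,\tmin n}\bigr)=C^{\star\,\tmin n}$. The conclusion for $C^{\tmax n}$ is then pure duality: for $z\in C^{\tmax n}$ and any $g\in C^{\star\,\tmin n}$ one has $\langle g,\, S_\sigma z\rangle=\langle S_{\sigma^{-1}}g,\, z\rangle\geq 0$, because $S_{\sigma^{-1}}g$ again lies in $C^{\star\,\tmin n}$ and $z$ is nonnegative on that cone. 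Thus $S_\sigma(C^{\tmax n})\subseteq C^{\tmax n}$, and applying this to $\sigma^{-1}$ gives $S_\sigma(C^{\tmax n})=C^{\tmax n}$ for every $\sigma\in S_n$. By the same pairing identity $S_\sigma$ likewise maps the dual cone $(C^{\tmax n})^{\star}=C^{\star\,\tmin n}$ onto itself and fixes the symmetric tensor $1_K^{\otimes n}$, so each $S_\sigma$ restricts to an automorphism of the ordered-vector-space data $\bigl(V^{\otimes n}, C^{\tmax n}\bigr)$ attached to the $n$-fold maximal tensor product — which is the assertion that $S_n$ preserves the $C^{\tmax n}$ cone structure.

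The argument is essentially bookkeeping, so I do not expect a genuine obstacle. The only point needing a little care is the identification of the iterated construction $(\cdots((C\tmax C)\tmax C)\cdots)$ with the single closed-form description displayed above: peeling off one tensor factor at a time uses that the minimal tensor product of proper cones is again proper, so that no bidual/closure correction intervenes. If one prefers to sidestep even this, the same duality step can be run by induction on $n$: since $S_n$ is generated by adjacent transpositions it suffices to prove invariance under a single swap of two neighbouring factors, which follows directly from the two-fold definition $C_1\tmax C_2=(C_1^{\star}\tmin C_2^{\star})^{\star}$ together with the $n=2$ instance of the pairing identity, and one then composes.
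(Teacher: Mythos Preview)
Your proof is correct and follows essentially the same approach as the paper's: both arguments verify that for a product functional $f_1\otimes\cdots\otimes f_n$ with $f_i\in C^\star$ one has $\langle f_1\otimes\cdots\otimes f_n,\, S_\sigma z\rangle=\langle f_{\sigma(1)}\otimes\cdots\otimes f_{\sigma(n)},\, z\rangle\geq 0$, whence $S_\sigma(C^{\tmax n})\subseteq C^{\tmax n}$, and then invoke $\sigma^{-1}$ for the reverse inclusion. Your additional remarks on the iterated-versus-closed-form description and the inductive alternative are extra care the paper omits, but the core duality computation is identical.
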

\begin{proof}
    Let 
    \begin{align}
    \begin{split}
    U\,:\, &S_n \rightarrow \text{GL}\left(V^{\otimes n}\right)\\
    & \pi \mapsto U(\pi)
    \end{split}
    \end{align}
    be the unitary permutation representation of $S_n$.
    For any $i\in [n]$, let $f_i\in C^{\star}$ and $x_i\in C$. Then, for any $\sigma \in S_n$, we have
    \begin{align}
    \left(\bigotimes_{i=1}^n f_i\right)\left(U_{\sigma}\left(\bigotimes_{i=1}^n x_i\right)\right)&=  \left(\bigotimes_{i=1}^n f_i\right)\left(\bigotimes_{i=1}^n x_{\sigma^{-1}(i)}\right)\\
         &=\prod_{i=1}^n f_i(x_{\sigma^{-1}(i)})= \left(\bigotimes_{i=1}^n f_{\sigma(i)}\right)\left(\bigotimes_{i=1}^n x_i\right).
    \end{align}
    Thus, as linear functionals
    \begin{align}
        \left(\bigotimes_{i=1}^n f_i\right)\circ U_{\sigma} = \left(\bigotimes_{i=1}^n f_{\sigma(i)}\right).
    \end{align}
    Since for any $i\in [n]$ $f_i\in C^{\star}$, we have $f_{\sigma(i)}\in C^{\star}$ for every $\sigma \in S_n$. By definition of the maximal tensor product between cones, for any $z\in C^{\hat{\otimes} n}$ it follows that
    \begin{align}
        \left(\bigotimes_{i=1}^n f_i\right)\left(U_{\sigma}(z)\right)= \left(\bigotimes_{i=1}^n f_{\sigma(i)}\right)(z)\geq 0.
    \end{align}
    The same argument holds for $\sigma^{-1}$ and thus, $U_{\sigma}$ is an order isomorphism and $S_n$ acts via cone-preserving linear maps. 
\end{proof}
This results extends naturally to the $S_n$ action on $K_A\tmax K_B^{\tmax n}$. 

\section{Affine maps on convex bodies}\label{sec:affine_maps}

In this section, we clarify several notions of affine maps and establish an isomorphism between affine maps and linear maps, providing the mathematical background for \autoref{fig:affine_maps_to_linear_maps}.

\begin{definition}[Affine maps, Def.\ 3.8 \cite{Pl_vala_2023}]
    Let $K$ be a state space and $V$ a vector space. A function $f:K\to V$ is called \emph{affine}, if 
    \begin{align}
        f(\lambda x + (1-\lambda)y) = \lambda f(x) + (1-\lambda)f(y) \quad x,y\in K, \ \lambda \in [0,1].
    \end{align}
    We denote by $A(K,V)$
    the vector space of affine functions from $K$ to $V$, and write $A(K)$ when $V \cong \mathbb{R}$ as a vector space.
\end{definition}

Let $K_A$ and $K_B$ denote state spaces. For maps whose domain is the Cartesian product $K_A\times K_B$, the term affine requires careful specification, as there are two canonical generalizations of affine structure to product spaces.

\begin{definition}[Separately and jointly affine maps]
    Let $K_A$ and $K_B$ be two state spaces and consider a map $f:K_A \times K_B \to V$ for a vector space $V$.
    \begin{enumerate}
        \item $f$ is called \emph{jointly affine} if for $ x_A,\, y_A \in K_A$ and $x_B,\, y_B \in K_B, \quad \lambda \in [0,1]$ we have
        \begin{equation}
        \begin{aligned}
            f(\lambda(x_A,x_B) + (1-\lambda)(y_A,y_B)) = \lambda f(x_A,x_B) + (1-\lambda)f(y_A,y_B),
        \end{aligned}
        \end{equation}
        \item $f$ is called \emph{separately affine} if we have 
        \begin{equation}
        \begin{aligned}
            K_A \to A(K_B, V), \quad  x_A \mapsto [x_B \mapsto f(x_A,x_B)], \\
            K_B \to A(K_A, V), \quad  x_B \mapsto [x_A \mapsto f(x_A,x_B)], 
        \end{aligned}
        \end{equation}
        i.e.\ for every fixed element $x_A$ the resulting map is affine from $K_B \to V$ and similar for fixed $x_B$. The set of separately affine maps is denoted as $\operatorname{SAff}(K_A\times K_B,V)$.
    \end{enumerate}
\end{definition}
Let $A(K, V)^{\star}$ denote the vector space dual to $A(K, V)$. There exists a natural embedding $K\subseteq A(K, V)^{\star}$ via the evaluation map, i.e.\ for any $x\in K$, define the evaluation functional $\psi_x\in A(K,V)^\star$ such that for any $f\in A(K,V)$ we have $\psi_x(f)=f(x)$. We denote by $\mathcal{L}(V,W)$ the set of all linear maps $T\,:\, V\rightarrow W$. 

\begin{proposition}\label{prop:single_variable_affine_to_linear}
    Let $K$ be a state space and $V$ a vector space. Then, 
    \begin{align}
        A(K,V)\cong \mathcal{L}\lrbracket{A(K,V)^{\star}, V}
    \end{align}
    as vector spaces.
\end{proposition}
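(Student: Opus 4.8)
The plan is to make the correspondence explicit: I would send $f\in A(K,V)$ to the unique linear map $\hat f\colon A(K,V)^\star\to V$ extending $f$ along the evaluation embedding $\iota\colon K\hookrightarrow A(K,V)^\star$, $x\mapsto\psi_x$, and exhibit the inverse as restriction $T\mapsto T\circ\iota$. Everything else is then a matter of checking that these two assignments are mutually inverse linear maps.

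The one substantive ingredient is that the $\mathbb{R}$-linear span of $\{\psi_x:x\in K\}$ is all of $A(K,V)^\star$. Since $K$ is a convex body and $V$ is finite dimensional, $A(K,V)$ is finite dimensional, hence canonically reflexive, so it suffices to observe that a vector $g\in A(K,V)\cong A(K,V)^{\star\star}$ annihilating every $\psi_x$ satisfies $g(x)=\psi_x(g)=0$ for all $x\in K$, i.e.\ $g=0$; thus no proper subspace of $A(K,V)^\star$ contains all the $\psi_x$. Granting this, write an arbitrary $\phi\in A(K,V)^\star$ as $\phi=\sum_i c_i\psi_{x_i}$; whenever such a combination is the zero functional one has $\sum_i c_i g(x_i)=0$ for every $g\in A(K,V)$, and taking $g=f$ shows that $\hat f(\phi):=\sum_i c_i f(x_i)$ is well defined. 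It is visibly linear in $\phi$, and the assignment $\Phi\colon f\mapsto\hat f$ is linear because $\widehat{f+\lambda g}$ and $\hat f+\lambda\hat g$ agree on the spanning family $\{\psi_x\}$.

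It then remains to verify bijectivity. For injectivity, $\hat f=0$ forces $f(x)=\hat f(\psi_x)=0$ for all $x$, hence $f=0$. For surjectivity, given $T\in\mathcal{L}(A(K,V)^\star,V)$ set $f:=T\circ\iota$, so $f(x)=T(\psi_x)$; from $\psi_{\lambda x+(1-\lambda)y}(g)=g(\lambda x+(1-\lambda)y)=\lambda g(x)+(1-\lambda)g(y)$ we obtain $\psi_{\lambda x+(1-\lambda)y}=\lambda\psi_x+(1-\lambda)\psi_y$ in $A(K,V)^\star$, and applying the linear map $T$ gives $f(\lambda x+(1-\lambda)y)=\lambda f(x)+(1-\lambda)f(y)$, so $f\in A(K,V)$. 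Finally $\hat f$ and $T$ agree on every $\psi_x$, hence on all of $A(K,V)^\star$ by the spanning property, so $\Phi(f)=T$. Therefore $\Phi$ is a vector-space isomorphism with inverse $T\mapsto T\circ\iota$.

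I do not expect a genuine obstacle here: the only step that is not pure bookkeeping is the spanning/separation property of the evaluation functionals, which is precisely what makes $\hat f$ simultaneously well defined and unique and what forces surjectivity to close up. The same reasoning transfers verbatim to the homogenized picture underlying \autoref{fig:affine_maps_to_linear_maps}, where $A(K,V)^\star$ is spanned by the image under $\iota$ of the base $K$ of the proper cone $A(K)^{\star+}$, cf.\ \cite[Lem.\ 3.34]{Pl_vala_2023}.
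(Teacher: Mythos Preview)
Your argument is correct and follows essentially the same route as the paper: both build the isomorphism by sending $f$ to the unique linear extension $\hat f$ with $\hat f(\psi_x)=f(x)$ along the evaluation embedding $\iota$, with inverse given by restriction $T\mapsto T\circ\iota$. The only cosmetic difference is that the paper justifies existence/uniqueness of $\hat f$ via the explicit homogenized coordinates $x\mapsto(x,1)^T$, whereas you instead prove that $\{\psi_x\}$ spans $A(K,V)^\star$ via reflexivity and then extend by linearity; both arguments land on the same map.
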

\begin{proof}
As this proposition is standard in GPTs, we confine ourselves to a sketch of the proof. The argument is constructive. Consider the embedding of any $x\in K$ into $A(K,V)^{\star}$ via 
         \begin{align}
             \iota \,:\, &K \rightarrow A(K,V)^\star,\\
             &x\mapsto \begin{pmatrix}
           x \\
           1 \\
           \end{pmatrix}\eqqcolon \psi_x.
         \end{align}
Since $A(K,V)\cong V^\star \oplus \mathbb{R}$, we identify each $f\in A(K,V)$ with the unique map
    \begin{align}
    \begin{split}
            F\,:\, &A(K,V)^{\star} \rightarrow V,\\
            & \begin{pmatrix}
           x \\
           1 \\
         \end{pmatrix} \mapsto  \left\langle\begin{pmatrix}
           x \\
           1 \\
         \end{pmatrix}, (\psi, c)\right\rangle = \psi(x)+c
    \end{split}
    \end{align}
    such that $F(\psi_x)=f(x)$. Then, $F$ is linear. Conversely, any linear $F$ defines again uniquely the affine $f$ via $f(x)=F\lrbracket{\psi_x}$. It remains to observe that the mapping 
    \begin{align}
        A(K,V)\to \mathcal{L}\lrbracket{A(K,V)^{\star}, V}, \quad f \mapsto F
    \end{align}
    is linear. 
\end{proof}

\begin{proposition}\label{prop:isomorphism_sep_affine}
    We have
    \begin{align}
        \mathcal{L}(A(K_A)^\star \otimes A(K_B)^\star,V) \cong \operatorname{SAff}(K_A\times K_B,V)
    \end{align}
    as vector spaces.
\end{proposition}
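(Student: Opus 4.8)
The plan is to realize the isomorphism as a composition of elementary bijections, each of which is a currying/uncurrying of affine or linear maps, together with two applications of \autoref{prop:single_variable_affine_to_linear}.

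First I would rewrite separate affinity as an ``affine-into-affine-maps'' condition. Since affinity of a map valued in a function space is checked pointwise, a map $f:K_A\times K_B\to V$ is separately affine precisely when the assignment $x_A\mapsto f(x_A,\cdot)$ is a well-defined affine map $K_A\to A(K_B,V)$: well-definedness encodes $f(x_A,\cdot)\in A(K_B,V)$ for each $x_A$, and affinity (checked by evaluating at each fixed $x_B$) encodes $f(\cdot,x_B)\in A(K_A,V)$ for each $x_B$. This gives a linear bijection $\operatorname{SAff}(K_A\times K_B,V)\cong A\bigl(K_A,\,A(K_B,V)\bigr)$.

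Next I would apply \autoref{prop:single_variable_affine_to_linear} twice. In the form needed here it states that for a state space $K$ and any finite-dimensional target $W$ there is a linear isomorphism $A(K,W)\cong\mathcal{L}(A(K)^\star,W)$ sending an affine $h$ to the unique linear map $\hat h$ with $\hat h(\psi_x)=h(x)$ for all $x\in K$, where $\psi_x\in A(K)^{\star+}$ is the canonical embedding of the state $x$; both existence and uniqueness of $\hat h$ come from the fact that $K$ is a base of the proper cone $A(K)^{\star+}$, so that $\{\psi_x:x\in K\}$ spans $A(K)^\star$ and every element of $A(K)^\star$ has the form $\lambda x-\mu y$ with $x,y\in K$, $\lambda,\mu\ge0$. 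Applying this inside $A(K_A,-)$ with $W=V$ gives $A(K_A,A(K_B,V))\cong A\bigl(K_A,\mathcal{L}(A(K_B)^\star,V)\bigr)$, and applying it again with $W=\mathcal{L}(A(K_B)^\star,V)$ gives $A\bigl(K_A,\mathcal{L}(A(K_B)^\star,V)\bigr)\cong\mathcal{L}\bigl(A(K_A)^\star,\mathcal{L}(A(K_B)^\star,V)\bigr)$. Finally, the tensor-hom adjunction for the (finite-dimensional, algebraic) tensor product supplies $\mathcal{L}\bigl(A(K_A)^\star,\mathcal{L}(A(K_B)^\star,V)\bigr)\cong\mathcal{L}\bigl(A(K_A)^\star\otimes A(K_B)^\star,V\bigr)$. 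Composing all the steps yields the asserted isomorphism; unwinding it, it sends $\hat F\in\mathcal{L}(A(K_A)^\star\otimes A(K_B)^\star,V)$ to $(x_A,x_B)\mapsto\hat F(\psi_{x_A}\otimes\psi_{x_B})$, and conversely a separately affine $f$ to the unique linear $\hat F$ with $\hat F(\psi_{x_A}\otimes\psi_{x_B})=f(x_A,x_B)$, which is well defined because the simple tensors $\psi_{x_A}\otimes\psi_{x_B}$ span $A(K_A)^\star\otimes A(K_B)^\star$.

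I do not expect a genuine obstacle: the argument is bookkeeping. The two places that warrant care are (i) the identification of ``separately affine'' with ``affine-valued-in-$A(K_B,V)$'', which relies on the elementary observation that affinity into a function space is pointwise; and (ii) ensuring \autoref{prop:single_variable_affine_to_linear} is invoked with a general vector-space target rather than a scalar one, and that each intermediate map is linear in the map being transported, so that the composition is a vector-space isomorphism and not merely a set-theoretic bijection. Everything else is the standard currying of multi-affine maps.
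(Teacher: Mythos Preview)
Your proposal is correct and follows essentially the same approach as the paper: both arguments amount to two applications of \autoref{prop:single_variable_affine_to_linear} (one per variable) followed by the universal property of the tensor product, arriving at the same characterizing identity $\hat F(\psi_{x_A}\otimes\psi_{x_B})=f(x_A,x_B)$. The only difference is packaging: the paper constructs the bilinear map by hand from the one-variable linearizations $F_{x_A}$, $F_{x_B}$ and checks consistency on evaluation pairs, whereas your currying chain $\operatorname{SAff}\cong A(K_A,A(K_B,V))\cong A(K_A,\mathcal{L}(A(K_B)^\star,V))\cong\mathcal{L}(A(K_A)^\star,\mathcal{L}(A(K_B)^\star,V))\cong\mathcal{L}(A(K_A)^\star\otimes A(K_B)^\star,V)$ makes that step automatic.
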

\begin{proof}
Let $f\,:\, K_A\times K_B \rightarrow V$ be (jointly) affine in $(x_A, x_B)\in K_A\times K_B$. For every fixed $x_B$, the map $x_A \mapsto f(x_A, x_B)$ is affine in $K_A$. By \autoref{prop:single_variable_affine_to_linear}, there exists a linear map
\begin{align}
        F_{x_B}\,:\, &A(K_A)^\star\rightarrow V
\end{align}
such that $ F_{x_B}\lrbracket{\psi^A_{x_A}}=f(x_A, x_B)$. Similarly, for every fixed $x_A$, there exists a linear map 
\begin{align}
        F_{x_A}\,:\, &A(K_B)^\star\rightarrow V
\end{align}
such that $F_{x_A}\lrbracket{\psi^B_{x_B}}=f(x_A, x_B)$. We aim to build a bilinear map 
\begin{align}
\begin{split}
    \hat{F}\,:\, &A(K_A,V)^\star \times  A(K_B,V)^\star \rightarrow V
\end{split}
\end{align}
satisfying $\hat{F}\lrbracket{\psi^A_{x_A}, \psi^B_{x_B}}=f\lrbracket{x_A, x_B}$ for all $(x_A,x_B)\in K_A\times K_B$. For any $\phi_A\in A(K_A, V)^\star$ and $x_B\in K_B$, define $\hat{F}\lrbracket{\phi_A, \psi_{x_B}^B}\coloneqq  F_{x_B}\lrbracket{\phi_A}$. This is linear in $\phi_A$ since $F_{x_B}\lrbracket{\phi_A}$ is linear. Accordingly, for any $\phi_B\in A(K_B, V)^\star$ and $x_A\in K_A$, define $\hat{F}\lrbracket{\psi^A_{x_B},  \phi_B}\coloneqq  F_{x_A}\lrbracket{\phi_B}$. This is linear in $\phi_B$ since $F_{x_A}\lrbracket{\phi_B}$ is linear. By construction, for $(x_A, x_B)\in K_A\times K_B$ we have
\begin{align}
\begin{split}
    \hat{F}\lrbracket{\psi^A_{x_A}, \psi^B_{x_B}}= F_{x_B}\lrbracket{\psi_{x_A}^A}=F_{x_A}\lrbracket{\psi_{x_B}^B}=f(x_A, x_B).
\end{split}
\end{align}
Thus, the two ways of defining $\hat{F}$ agree when both arguments are evaluations. By the universal property of the algebraic tensor product (see \cite{Ryan2002} or \cite[App.\ C]{Pl_vala_2023}), bilinear maps on $A(K_A)^\star \times A(K_B)^\star$ are in one-to-one correspondance to linear maps on $A(K_A)^\star \otimes A(K_B)^\star$. Thus, there exists a unique linear map
\begin{align}\label{eq:defining_property}
    \begin{split}
        F\lrbracket{\psi_{x_A}^A \otimes \psi_{x_B}^B} = \hat{F}\lrbracket{\psi_{x_A}^A, \psi_{x_B}^B}=f\lrbracket{x_A,x_B}.
    \end{split}
\end{align}
\end{proof}

\section{Proofs of auxiliary results}\label{sec:Proofs_of_auxiliary_results}

We restate and prove \autoref{prop:jencova}.
 \begin{proposition}[Restated]
    Consider state space $K$ and $\lambda, \mu\in\mathbb{R}_+$. For $x,y\in K$ satisfying the order bounds $\mu y \leq x \leq \lambda y$ (with respect to the ambient cone $A(K)^{\star +}$), one has
    \begin{align}
        D(x\Vert y) = \int_\mu^\lambda \frac{ds}{s} \left[\sup_{f\in E(K)} sf(y) - f(x) \right] + \ln\lambda +1 - \lambda.
    \end{align}
\end{proposition}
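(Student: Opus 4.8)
The plan is to start from the integral representation \autoref{eq:def_relative_entropy} and reduce it, via the single substitution $s=t/(t-1)$, to the claimed one-sided integral plus the explicit boundary term; this is exactly the route of \cite[Cor.\ 1]{Jen_ov__2024} for Umegaki entropy, and in the GPT setting the only structural input needed is that $f\mapsto 1_K-f$ is an involution of $E(K)$.

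Write $h(t):=\sup_{f\in E(K)}\bigl(-(1-t)f(x)-tf(y)\bigr)$, so that $D(x\Vert y)=\int_{\mathbb R}\frac{h(t)}{|t|(t-1)^2}\,dt$. Since $0\in E(K)$ we have $h\ge 0$, and for $t\in[0,1]$ the bracket is nonpositive for every $f\ge 0$, so $h$ vanishes on $[0,1]$ and only the regions $t<0$ and $t>1$ contribute. Applying $1_K$ to $\mu y\le x\le\lambda y$ gives $\mu\le 1\le\lambda$. On $t>1$ I would write $h(t)=(t-1)\sup_f\bigl(f(x)-\tfrac{t}{t-1}f(y)\bigr)$, and on $t<0$, $h(t)=(1-t)\sup_f\bigl(\tfrac{-t}{1-t}f(y)-f(x)\bigr)$; both prefactors are positive. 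The map $t\mapsto s:=t/(t-1)$ is an involution of $\mathbb R\setminus\{1\}$ sending $(1,\infty)$ onto itself and $(-\infty,0)$ onto $(0,1)$, with $dt=-(t-1)^2\,ds$, so after cancelling the linear prefactor pulled out of $h$ the measure $\frac{h(t)\,dt}{|t|(t-1)^2}$ becomes $\frac{ds}{s}$ times the relevant supremum. This yields
\begin{align}
    D(x\Vert y)=\int_0^1\frac{ds}{s}\sup_{f\in E(K)}\bigl(sf(y)-f(x)\bigr)+\int_1^{\infty}\frac{ds}{s}\sup_{f\in E(K)}\bigl(f(x)-sf(y)\bigr).
\end{align}
The order bounds truncate the range: for $s\le\mu$ one has $sf(y)-f(x)\le(s-\mu)f(y)\le 0$ (from $\mu y\le x$), and for $s\ge\lambda$ one has $f(x)-sf(y)\le(\lambda-s)f(y)\le 0$ (from $x\le\lambda y$), so the two integrands vanish on $(0,\mu]$ and on $[\lambda,\infty)$ respectively; in particular this also settles integrability at the endpoints $t=0,1$ without any further estimate.

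It remains to reconcile the second (truncated) integral with the target. Because $f\mapsto 1_K-f$ maps $E(K)$ onto itself and $1_K(x)=1_K(y)=1$, for every $s$ we have $\sup_f\bigl(f(x)-sf(y)\bigr)=\sup_f\bigl((1_K-f)(x)-s(1_K-f)(y)\bigr)=(1-s)+\sup_f\bigl(sf(y)-f(x)\bigr)$. Feeding this into $\int_1^{\lambda}\frac{ds}{s}\sup_f(f(x)-sf(y))$ and using $\int_1^{\lambda}\frac{1-s}{s}\,ds=\ln\lambda+1-\lambda$ turns it into $\ln\lambda+1-\lambda+\int_1^{\lambda}\frac{ds}{s}\sup_f(sf(y)-f(x))$; merging with the first integral over $[\mu,1]$ yields the assertion. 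The only point requiring genuine care is the change-of-variables bookkeeping (tracking the Jacobian, the orientation reversal, and which supremum survives on each piece); the GPT-specific steps --- positivity of effects, $f(y)\le 1$, and the complement identity --- are all immediate, so the argument is, as stated in the paper, a verbatim transcription of \cite[Cor.\ 1]{Jen_ov__2024} into the conic language of \cite{Pl_vala_2023}.
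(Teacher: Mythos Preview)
Your proof is correct and follows essentially the same route as the paper's: both split the defining integral into the three regions $(-\infty,0),[0,1],(1,\infty)$, kill the middle piece by positivity of effects, apply the M\"obius substitution $s=t/(t-1)$ to the outer pieces, truncate via the order bounds $\mu y\le x\le\lambda y$, and then use the complement involution $f\mapsto 1_K-f$ together with $\int_1^\lambda\frac{1-s}{s}\,ds=\ln\lambda+1-\lambda$ to merge the two remaining integrals. Your write-up is somewhat more explicit about the Jacobian and orientation bookkeeping than the paper's appendix proof, but the argument is the same.
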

\begin{proof}
    We follow the general proof idea from \cite[Cor.\ 1]{Jen_ov__2024} with modifications to the GPT setting. We view $x,y$ as elements of the positive cone $A(K)^{\star +}$ via the canonical embedding of $K$ into $A(K)^{\star}$ as discussed in \autoref{sec:notation_and_preliminaries}. Each $f\in E(K)\subseteq A(K)$ acts linearly on the ambient ordered vector space $A(K)^{\star}$. Hence, for any real $t$,
    \begin{align}
        (1-t)f(x)+t(fy)=f\lrbracket{(1-t)x+ty}.
    \end{align}
    Split the integral over three regions $(-\infty, 0), [0,1], (1, \infty)$. Since $K$ is convex, 
    \begin{align}
        (1-t)x + ty \in K, \quad t\in [0,1].
    \end{align}
    Every effect $f\in E(K)$ is an affine map $K\rightarrow [0,1]$, hence in particular $f(z)\geq 0$ for all $z\in K$. Therefore, for $t\in [0,1]$,
    \begin{align}
        f\left((1-t)x +ty\right)\geq 0 \implies -\lrbracket{f\left((1-t)x +ty\right)}\leq 0
    \end{align}
    Since $0\in E(K)$, the supremum over $f\in E(K)$ is 0. Hence the integral over $[0,1]$ contributes 0. Via a Möbius substitution and linearity of $f$ on the ambient space $A(K)^{\star}$ we obtain 
    \begin{align}
        f\lrbracket{(1-t)x+ ty}=\frac{1}{1-s}f(x-sy),  
    \end{align}
    hence 
    \begin{align}
        -f\lrbracket{(1-t)x+ ty}=\frac{1}{1-s}\lrbracket{-f(x-sy)}.
    \end{align}
    Computation of the Jacobian factor in the measure yields 
    \begin{align}
        \int_{-\infty}^0 \frac{dt}{-t  (t-1)^2} \sup_{f \in E(K)}-f((1-t)x + ty) =\int_{\mu}^1 \frac{ds}{s} \sup_{f\in E(K)} -f(x-ys). 
    \end{align}
    If $0\leq s \leq \mu$, then $x-sy\geq (\mu-s)y\geq 0$. For positive $f$, $f(x-sy)\geq 0$, so $-f(x-sy)\leq 0$ and the supremum over $E(K)$ equals 0 (attained by $f=0$). Hence the integral over $[0,\mu]$ is zero. Analogously, 
     \begin{align}
        \int_1^\infty \frac{dt}{t (t-1)^2}\sup_{f\in E(K)} -f((1-t)x + ty) = \int_1^\lambda \frac{ds}{s} \sup_{f\in E(K)}f(x-ys).
    \end{align}
    For any $f\in E(K)$, also $\lrbracket{1_K-f}\in E(K)$ \cite[Lem.\ 3.15]{Pl_vala_2023}. Hence for any ambient vector $z$, 
    \begin{align}
        f(z)=1_K(z)-\lrbracket{1_K-f}(z),
    \end{align}
    and taking suprema over $f\in E(K)$ yields
    \begin{align}
        \sup_{f\in E(K)}f(z)=1_K(z)+\sup_{f\in E(K)}\lrbracket{-f(z)}.
    \end{align}
    Apply this with $z=x-sy$. Since $x,y$ are normalized states $1_K(x)=1_K(y)=1$, hence $ 1_K\lrbracket{x-sy}=1-s$. Therefore, 
     \begin{align}
        \sup_{f \in E(K)} f(x-ys) = 1 - s + \sup_{f\in E(K)} - f(x-ys).
    \end{align}
    Thus, combining terms and explicit calculation yields 
    \begin{align}
        D(x \Vert y) = \int_{\mu}^{\lambda}\frac{ds}{s} \sup_{f\in E(K)}\lrbracket{-f(x-sy)}+ \ln\lambda +1-\lambda.
    \end{align}
    By linearity $\sup_{f\in E(K)}\lrbracket{-f(x-sy)}=\sup_{f\in E(K)}\lrbracket{sf(y)-f(x)}$, completing the proof.
\end{proof}

For the convenience of the reader, we reproduce the following proposition from \cite{Puchaa2015} and provide a concise proof.
\begin{proposition}[\cite{Puchaa2015}]\label{prop:interior_points_of_proper_cones_are_order_units}
    Let $C\subseteq V$ be a proper cone in finite dimension. If $e\in \operatorname{int}(C)$, then $e$ is an order unit for $\lrbracket{V, C}$. Consequently, for any $x\in C$ the quantity 
    \begin{align}
        m_e(x)\coloneqq \inf\left\{\lambda \geq 0\,:\, x\leq \lambda e\right\}
    \end{align}
    is finite. 
\end{proposition}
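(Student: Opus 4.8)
The plan is to unwind the definition of the Euclidean interior and combine it with the two defining closure properties of a cone (nonnegative scaling and addition). Since $e\in\operatorname{int}(C)$, fix $\varepsilon>0$ such that the closed ball $\overline{B}(e,\varepsilon)$ is contained in $C$; here finite dimensionality of $V$ is used only to guarantee that its Euclidean topology is canonical and that all norms are equivalent, so that a genuine norm ball around $e$ is available.

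First I would establish the order-unit property itself: given an arbitrary $v\in V$, I claim there is $\mu\in\mathbb{R}_{\ge 0}$ with $\mu e-v\in C$. If $v=0$ take $\mu=0$. Otherwise put $w\coloneqq e-\tfrac{\varepsilon}{\lVert v\rVert}\,v$, so that $\lVert w-e\rVert=\varepsilon$ and hence $w\in\overline{B}(e,\varepsilon)\subseteq C$. Scaling $w$ by the positive number $\tfrac{\lVert v\rVert}{\varepsilon}$ and using that $C$ is a cone yields $\tfrac{\lVert v\rVert}{\varepsilon}\,e-v\in C$, i.e.\ $v\le\tfrac{\lVert v\rVert}{\varepsilon}\,e$. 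Thus every $v\in V$ is dominated by a nonnegative multiple of $e$, which is exactly the statement that $e$ is an order unit for $(V,C)$, equivalently that $\lrbracket{V,C,e}$ is an order unit space.

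The consequence for $m_e$ is then immediate. For $x\in C\subseteq V$ the set $S_x\coloneqq\{\lambda\ge 0:\lambda e-x\in C\}$ is nonempty by the previous paragraph (indeed any $\lambda\ge\lVert x\rVert/\varepsilon$ lies in $S_x$), so $m_e(x)=\inf S_x$ is a finite nonnegative real. Although it is not needed for the statement, I would also note in passing that $S_x$ is closed (because $C$ is closed) and upward closed (if $\lambda\in S_x$ and $\lambda'\ge\lambda$ then $\lambda'e-x=(\lambda'-\lambda)e+(\lambda e-x)\in C$, using $e\in C$), hence $S_x=[\,m_e(x),\infty)$ and the infimum is attained.

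There is essentially no obstacle in this argument; the single point deserving emphasis is that ``interior'' here is the interior in the full ambient space, so that an honest norm ball can be exploited. This is precisely the hypothesis that is unavailable for a lower dimensional state space $K$, where $\operatorname{int}(K)$ is typically empty and one must instead work with $\operatorname{relint}(K)$ via the separate compactness/boundary argument carried out in \autoref{lemma:cA-orderUnitBound}.
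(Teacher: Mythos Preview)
Your proof is correct and follows essentially the same route as the paper's: both fix a ball around $e$ inside $C$, perturb $e$ in the direction of $-v$ to land in that ball, and then scale to obtain $\mu e-v\in C$. The only differences are cosmetic (you use the closed ball and the explicit constant $\mu=\lVert v\rVert/\varepsilon$, and you add the harmless observation that $S_x$ is closed and upward-closed so the infimum is attained).
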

\begin{proof}
    Fix any norm $\lVert \cdot \rVert$ on $V$. Since $e\in\operatorname{int}(C)$, there exists $r>0$ such that $B(e,r)\coloneqq \left\{y \in V\,:\, \lVert y - e\rVert < r \right\}\subset C$, i.e.\ the open ball around $e$ is contained in $C$. Given $v\in V$, choose $\mu >0$ so that $\left\lVert \frac{\mu}{v}\right\rVert < r$. Then, $e-\frac{1}{\mu} v \in B(e, r)\subset C$, hence by conicity $\mu e-v = \mu \lrbracket{ e- \frac{1}{\mu}v}\in C$. Thus, $v\leq \mu e$. In particular, for $x\in C$ the set $\left\{\lambda \geq 0\,:\, x\leq \lambda e\right\}$ is non-empty (take $\lambda = \mu$), so $0\leq m_e(x)\leq \mu < \infty$. 
\end{proof}
Note that $ m_e(x)\leq \lVert x\rVert_e$, where $\lVert \cdot\rVert_e$ is the order-unit norm \cite[Sec.\ 3.6]{Pl_vala_2023}; moreover, equality holds whenever $x\geq 0$.\\

We restate and proof \autoref{lem:injectivity}.
\begin{lemma}[Restated]
   Let $T_A:V_A\to W_A$ be an injective linear map between finite dimensional vector spaces. Then, for any finite dimensional vector space $V_B$, the map
    \begin{align}
        T_A\otimes \operatorname{id}_B:V_A \otimes V_B \to W_A \otimes V_B
    \end{align}
    is injective. In particular, for any choice of norms $\Vert \cdot \Vert_{\operatorname{in}}$ on $V_A\otimes V_B$ and $\Vert\cdot \Vert_{\operatorname{out}}$ on $W_A \otimes V_B$, there exists a constant $\tilde{f}(A,B,T_A)\in\mathbb{R}_+$, depending only on $V_A$, $V_B$ and $T_A$, such that 
    \begin{align}
        \Vert T_A \otimes \operatorname{id}_B(X_{AB})\Vert_{\operatorname{out}} \geq \tilde{f}(A,B,T_A) \Vert X_{AB}\Vert_{\operatorname{in}}, \quad X_{AB} \in V_A\otimes V_B.
    \end{align}
\end{lemma}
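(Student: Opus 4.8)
The plan is to reduce everything to the elementary fact that an injective linear map between finite-dimensional spaces admits a left inverse, and then to invoke a standard compactness argument for the quantitative estimate.

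First I would establish injectivity. Since $T_A$ is injective and $V_A$, $W_A$ are finite-dimensional, there exists a linear left inverse $S_A : W_A \to V_A$ with $S_A \circ T_A = \operatorname{id}_{V_A}$ (choose a basis of $T_A(V_A)$, extend it to a basis of $W_A$, and define $S_A$ on the latter appropriately). Tensoring with $\operatorname{id}_B$ and using functoriality of the algebraic tensor product gives $(S_A \otimes \operatorname{id}_B) \circ (T_A \otimes \operatorname{id}_B) = (S_A \circ T_A) \otimes \operatorname{id}_B = \operatorname{id}_{V_A \otimes V_B}$, so $T_A \otimes \operatorname{id}_B$ has a left inverse and is therefore injective. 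Alternatively one may argue directly with a basis $\{v_j\}_j$ of $V_B$: writing $X_{AB} = \sum_j a_j \otimes v_j$ uniquely with $a_j \in V_A$, one has $(T_A \otimes \operatorname{id}_B)(X_{AB}) = \sum_j T_A(a_j) \otimes v_j$, which vanishes if and only if every $T_A(a_j) = 0$, if and only if every $a_j = 0$ (injectivity of $T_A$), i.e.\ $X_{AB} = 0$.

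For the quantitative bound I would use compactness. The unit sphere $\Sigma \coloneqq \{X \in V_A \otimes V_B : \Vert X \Vert_{\operatorname{in}} = 1\}$ is compact because $V_A \otimes V_B$ is finite-dimensional, and $X \mapsto \Vert (T_A \otimes \operatorname{id}_B)(X) \Vert_{\operatorname{out}}$ is continuous, hence it attains a minimum $\tilde{f}(A,B,T_A) \coloneqq \min_{X \in \Sigma} \Vert (T_A \otimes \operatorname{id}_B)(X) \Vert_{\operatorname{out}}$ on $\Sigma$. By the injectivity just proved, $(T_A \otimes \operatorname{id}_B)(X) \neq 0$ for every $X \in \Sigma$, so this minimum is strictly positive. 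Homogeneity of both norms then upgrades this to $\Vert (T_A \otimes \operatorname{id}_B)(X_{AB}) \Vert_{\operatorname{out}} \geq \tilde{f}(A,B,T_A)\, \Vert X_{AB} \Vert_{\operatorname{in}}$ for all $X_{AB} \in V_A \otimes V_B$ (the case $X_{AB} = 0$ being trivial), with a constant depending only on $V_A$, $V_B$, $T_A$ and the fixed choice of norms. Equivalently, one may simply take $\tilde{f}(A,B,T_A) = \Vert S_A \otimes \operatorname{id}_B \Vert^{-1}$ with the operator norm.

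There is no genuine obstacle here; the only point requiring care is that all the vector spaces are finite-dimensional, which is part of the hypotheses and is what guarantees the existence of a left inverse, the equivalence of norms, and the compactness of the unit sphere. If one wanted a constant that is independent of the norms one could instead phrase the estimate in terms of, e.g., the smallest singular value of $T_A$ relative to chosen inner products, but the statement as given only asks for fixed norms, so the compactness argument suffices.
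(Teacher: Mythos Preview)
Your proof is correct. The quantitative part is essentially identical to the paper's: you construct a left inverse and use its operator norm (your $S_A\otimes\operatorname{id}_B$ is precisely a concrete choice of the paper's abstract left inverse $S$), and your alternative compactness argument is also fine. Where you differ is in the injectivity step: the paper identifies $V_A\otimes V_B\cong\mathcal{L}(V_B^\star,V_A)$ and $W_A\otimes V_B\cong\mathcal{L}(V_B^\star,W_A)$, under which $T_A\otimes\operatorname{id}_B$ becomes post-composition $F\mapsto T_A\circ F$, visibly injective. Your argument---tensor a left inverse of $T_A$ with $\operatorname{id}_B$, or write $X_{AB}=\sum_j a_j\otimes v_j$ in a basis of $V_B$---is more elementary and avoids the hom-tensor identification entirely, at the modest cost of not exhibiting the commuting diagram that the paper uses to organize the argument. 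Both routes are standard; yours is slightly more self-contained.
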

\begin{proof}
    Let $\mathcal{L}(V_B^\star, V_A)$ denote the space of linear maps from $V_B^\star$ to $V_A$ and define $\mathcal{L}(V_B^\star, W_A)$ analogously. By \cite[Prop.\ C.8]{Pl_vala_2023}, the maps $\Phi$ and $\Psi$ depicted in \autoref{fig:info_completeness} are isomorphisms.
\begin{figure}[h!]
    \centering
    \begin{tikzpicture}[>=stealth]

  \node (A) at (0,3) {$V_A \otimes V_B$};
  \node (V) at (4,3) {$W_A \otimes V_B$};

  \node (B) at (0,0) {$\mathcal{L}(V_B^\star,V_A)$};
  \node (C) at (4,0) {$\mathcal{L}(V_B^\star,W_A)$};

  \draw[->] (A) -- node[above] {$T_A \otimes \operatorname{id}_B$} (V);
  \draw[->] (A) -- node[left] {$\Phi$} (B);
  \draw[->] (B) -- node[above] {$\widetilde{T}_A$} (C);
  \draw[<-] (C) -- node[right] {$\Psi$} (V);

\end{tikzpicture}
\caption{$\Phi$ and $\Psi$ are isomorphisms of vector spaces (cf. \cite[App.\ C]{Pl_vala_2023}).}
    \label{fig:info_completeness}
\end{figure}
Moreover, we define $\widetilde{T}_A$ via the identification $\Psi\circ T_A \otimes \operatorname{id}_B = \widetilde{T}_A \circ \Phi.$ Then, we have
\begin{align}
    \widetilde{T}_A:\mathcal{L}(V_B^\star,V_A) \to \mathcal{L}(V_B^\star,W_A), \quad F \mapsto T_A \circ F. 
\end{align}
Since $T_A$ is injective, $\widetilde{T}_A$ is injective. Thus, the diagram in \autoref{fig:info_completeness} commutes. By the concatenation of injective maps, $T_A \otimes \operatorname{id}_B = \Psi^{-1} \circ \widetilde{T}_A \circ \Phi$ is injective. Every injective map with a non-empty domain has a left-inverse on its range. Extend this to a linear map $S:W_A \otimes V_B \to V_A \otimes V_B$, such that
\begin{align}
    S(T_A \otimes \operatorname{id}_B)(X_{AB}) = X_{AB} \quad X_{AB} \in V_A \otimes V_B.
\end{align}
Since $V_B$ is finite dimensional, $S$ is bounded and we have
\begin{equation}
\begin{aligned}
   \Vert S\Vert \cdot \Vert  T_A \otimes \operatorname{id}_B(X_{AB}) \Vert_{\operatorname{out}} &\geq \Vert  S(T_A \otimes \operatorname{id}_B)(X_{AB}) \Vert_{\operatorname{in}}  \\
   &= \Vert X_{AB} \Vert_{\operatorname{in}}, \quad X_{AB} \in V_A \otimes V_B.
\end{aligned}
\end{equation}
The last equation implies
\begin{align}
   \Vert X_{AB} \Vert_{\operatorname{in}} \leq \frac{1}{\Vert S\Vert}  \Vert  T_A \otimes \operatorname{id}_B(X_{AB}) \Vert_{\operatorname{out}},  \quad X_{AB} \in V_A \otimes V_B.
\end{align}
\end{proof}

\begin{proposition}[Hölder-type inequality for unit-order norm and its dual]\label{prop:hoelder_ineq_order_unit_norm}
    Let $K$ be a state space. Then,
    \begin{align}
        \vert\langle x, f \rangle\vert \leq \lVert f \rVert_{1_{K}}\lVert x \rVert_{\star, 1_{K}}
    \end{align}
    for $f\in A(K)$ and $x\in A(K)^\star$. 
\end{proposition}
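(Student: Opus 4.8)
The plan is to observe that the asserted estimate is nothing more than the standard duality inequality between a norm and its dual norm, so the proof amounts to unwinding definitions. First I would recall from \cite[Prop.\ 3.38]{Pl_vala_2023} and \cite[Sec.\ 3.6]{Pl_vala_2023} that $\lVert\cdot\rVert_{1_K}$ is a genuine (order-unit) norm on $A(K)$ and that $\lVert\cdot\rVert_{\star,1_K}$ is, by construction, its dual norm on $A(K)^\star$, i.e.
\begin{align}
    \lVert x\rVert_{\star,1_K}=\sup\left\{\,\langle x,g\rangle \ :\ g\in A(K),\ \lVert g\rVert_{1_K}\le 1\,\right\},\qquad x\in A(K)^\star .
\end{align}
Here I would also note that the order-unit ball $\{g\in A(K):\lVert g\rVert_{1_K}\le 1\}$ is symmetric under $g\mapsto -g$ (immediate from the defining condition $-t1_K\le g\le t1_K$), so replacing $\langle x,g\rangle$ by $\lvert\langle x,g\rangle\rvert$ in the supremum leaves it unchanged.

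Next, given $f\in A(K)$ and $x\in A(K)^\star$, I would distinguish two cases. If $f=0$, both sides of the claimed inequality vanish and there is nothing to prove. Otherwise $\lVert f\rVert_{1_K}>0$, and I would normalise by setting $g\coloneqq f/\lVert f\rVert_{1_K}$, so that $\lVert g\rVert_{1_K}=1$. Applying the variational characterisation of the dual norm to $g$ gives $\lvert\langle x,g\rangle\rvert\le\lVert x\rVert_{\star,1_K}$, and multiplying through by $\lVert f\rVert_{1_K}$ and using bilinearity of the evaluation pairing yields $\lvert\langle x,f\rangle\rvert\le\lVert f\rVert_{1_K}\,\lVert x\rVert_{\star,1_K}$, as claimed.

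There is essentially no obstacle here: the only points deserving (trivial) care are the degenerate case $f=0$ and matching the precise convention adopted for $\lVert\cdot\rVert_{\star,1_K}$ — whether the defining supremum carries an absolute value — which is settled by the $g\mapsto -g$ symmetry of the order-unit ball. Should one prefer not to invoke that $\lVert\cdot\rVert_{\star,1_K}$ is literally the dual norm and instead argue from an order-theoretic description, one can reproduce the same inequality by choosing, for each $\varepsilon>0$, a scalar $t\le\lVert f\rVert_{1_K}+\varepsilon$ with $-t\,1_K\le f\le t\,1_K$ and pairing against $x$; but this route is strictly more cumbersome and unnecessary for the statement at hand.
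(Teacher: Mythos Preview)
Your proof is correct and follows essentially the same approach as the paper: both arguments handle $f=0$ trivially and, for $f\neq 0$, normalise $f$ by $\lVert f\rVert_{1_K}$ and invoke the variational definition of the dual norm. The only cosmetic difference is that the paper cites \cite[Prop.\ 3.40]{Pl_vala_2023} for the dual-norm formula with the absolute value already built in, whereas you supply the $g\mapsto -g$ symmetry argument to the same end.
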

\begin{proof}
By \cite[Prop.\ 3.40]{Pl_vala_2023}
\begin{align}
    \lVert x\rVert_{\star, 1_K}\coloneqq \sup_{\lVert g\rVert_{1_K}\leq 1}   \vert\langle x, g \rangle\vert.
\end{align}
For $f\neq 0$, we have
     \begin{align}
    \begin{split}
        \vert\langle x, f \rangle\vert &=\lVert f\rVert_{1_{K}} \left\lvert\left\langle x, \frac{f}{\lVert f\rVert_{1_{K}}} \right\rangle \right\rvert \\
        &\leq \lVert f\rVert_{1_{K}} \sup_{\lVert g\rVert_{1_{K}}}\lvert\langle x, g\rangle\rvert\\
        &= \lVert f \rVert_{1_{K}}\lVert x \rVert_{\star, 1_{K}}.  
    \end{split} 
\end{align}
For $f=0$ the claim follows directly. 
\end{proof}

\section{Comparison to Polarization Hierarchy}\label{sec:Comparison_to_Polarization_Hierarchy}

In this section we consider the special class of local constraints from \autoref{sec:comparison_DPS_PLG}, but without inequality constraints, i.e. we would like to describe the set of elements
\begin{align}\label{eq:feasible_set}
    \mathcal{F}\coloneqq \{(x_A,x_B) \ : \ f_A(x_A) = 0, \ f_B(x_B) = 0\} \subseteq K_A \times K_B
\end{align}
for two affine functions $f_A:K_A \to U_A$ and $f_B:K_B \to U_B$. In recent works in quantum information theory, this has turned out to be a particularly useful, yet somewhat intricate, case (e.g.\ \cite{Berta2021,zeiss2025approximatingfixedsizequantum,kossmann2025symmetric}). In particular, \cite[Sec.~II]{plavala2025polarization} points out (see also \autoref{sec:comparison_DPS_PLG}) that convexifying
\begin{equation}
    \begin{aligned}
        \inf_{(x_A,x_B) \in K_A \times K_B} \ & p(x_A,x_B) \\
        \text{s.t.}\quad & f_A(x_A) = 0 \quad \text{and} \quad f_B(x_B)= 0
    \end{aligned}
\end{equation}
to
\begin{equation}
    \begin{aligned}
        \inf_{x_{AB} \in K_A \dot{\otimes} K_B} \ & \langle P, x_{AB}\rangle \\
        \text{s.t.}\quad & \hat{F}_{\hookrightarrow}^A(x_{A})= 0 \quad \text{and} \quad \hat{F}_{\hookrightarrow}^B(x_{B})= 0
    \end{aligned}
\end{equation}
is \emph{not} an equality in general. In \cite[Sec.~2.4]{kossmann2025symmetric}, a concrete construction is given that illustrates the difference to the problem of optimizing over the convex hull of product states satisfying the constraints
\begin{align}
    \Sigma(A:B) \coloneqq \operatorname{conv}\{x_A \otimes x_B \in K_A \dot{\otimes} K_B \ : \ f_A(x_A) = 0, \ f_B(x_B)= 0\},
\end{align}
namely,
\begin{equation}\label{eq:appendix_sigma_opt}
\begin{aligned}
    \inf \   & \langle P,x_{AB}\rangle \\
    \text{s.t.}\quad & x_{AB} \in \Sigma(A:B).
\end{aligned}
\end{equation}
It is not difficult to see by identification of images that the corresponding map to local constraints on the level of cones (see \autoref{fig:affine_maps_to_linear_maps}) is given by
\begin{align}\label{eq:appendix_constraint_function}
    \hat{F}_{\hookrightarrow}:C_A \hat{\otimes} C_B\to U_A \oplus U_B, \quad x_A \otimes x_B \mapsto (F_A(x_A)\gamma_B(x_B)) \oplus (F_B(x_B)\gamma_A(x_A)).  
\end{align}
Thus, we may ask for a direct proof, seeing that the polarization hierarchy yields that the local constraints are satisfied and converges to the optimal value of \autoref{eq:appendix_sigma_opt}.  
\begin{proposition}\label{prop:polarization_forall_i}
    The polarization hierarchy as defined in \cite[eq. (28)]{plavala2025polarization} with the constraint function \autoref{eq:appendix_constraint_function} converges to a bicompatible sequence $\{y_n\}_{n\in \mathbb{N}}$ (compare with \cite[Def. 1]{plavala2025polarization}) such that $y_1$ has a decomposition
    \begin{align}
        y_1 = \sum_i p_i y_A^i \otimes y_B^i
    \end{align}
    with $(y_A^i,y_B^i) \in \mathcal{F}$ for all $i \in I$ and $\vert I \vert <\infty$.
\end{proposition}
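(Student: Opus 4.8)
The plan is to feed the constraint map \autoref{eq:appendix_constraint_function} into the convergence analysis of the polarization hierarchy recalled in \autoref{sec:comparison_DPS_PLG}, and then to upgrade the resulting ``almost everywhere'' feasibility to a genuine \emph{finite} convex decomposition all of whose terms lie in $\mathcal{F}$, using compactness of $\mathcal{F}$ and Carathéodory's theorem in the finite-dimensional space $V_A\otimes V_B$. Concretely, I would start from a sequence $(y_n^*)_n$ of optimizers of \cite[eq.~(28)]{plavala2025polarization} at level $n$ with constraint map $\hat{F}_{\hookrightarrow}$ from \autoref{eq:appendix_constraint_function}, and invoke verbatim the argument in \autoref{sec:comparison_DPS_PLG}: by compactness of $\operatorname{Ext}_n(K_A\tmax K_B,2)$ pass to a subsequence along which $\gamma_{K_A\tmax K_B}^{n,2}(y_n^*)$ converges to some $x_2^*\in\operatorname{Ext}_\infty(K_A\tmax K_B,2)$; since the independent-symmetry constraint \autoref{eqn:independent_symmetry_constraint} and the polarization constraint are linear and closed they pass to the limit, so by \autoref{eqn:definition_infinity_exchangeable_PLG} the limit is identified with a bicompatible sequence $\{y_n\}_{n\in\mathbb{N}}$, and by \cite[Lem.~2]{plavala2025polarization} there is a Borel probability measure $\mu$ on $K_A\times K_B$ with $y_1=\int x_A\otimes x_B\,d\mu$ and $\mathcal{S}_2(y_2)=\int (x_A\otimes x_B)^{\otimes 2}\,d\mu$.

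Next I would read off almost-everywhere feasibility exactly as in \cite[Lem.~2,~Thm.~3]{plavala2025polarization}: the polarization constraint survives in the limit, so $\prod\!\big((\hat{F}_{\hookrightarrow}\otimes\hat{F}_{\hookrightarrow})(\mathcal{S}_2(y_2))\big)=0$, and pushing this through the integral together with the defining property of $\prod$ (that $\prod(a\otimes a+b\otimes b)=0$ forces $a=b=0$) gives $\hat{F}_{\hookrightarrow}(x_A\otimes x_B)=0$ for $\mu$-a.e.\ $(x_A,x_B)$. The crux of the proof — and the step I expect to need the most care — is then to identify the $\mu$-essential support with $\mathcal{F}$: for $(x_A,x_B)\in K_A\times K_B$ the vectors are \emph{normalized} states, so the scalar normalization factors appearing in \autoref{eq:appendix_constraint_function} (the $\gamma_B(x_B)$ multiplying $F_A(x_A)$, and the $\gamma_A(x_A)$ multiplying $F_B(x_B)$) both equal $1$, and hence $\hat{F}_{\hookrightarrow}(x_A\otimes x_B)=F_A(x_A)\oplus F_B(x_B)$. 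Since $F_A$ and $F_B$ are the linear maps associated to the affine maps $f_A,f_B$ and $F_A(x_A)=0\iff f_A(x_A)=0$ on $K_A$ (see \autoref{sec:affine_maps}), this vanishes precisely when $(x_A,x_B)\in\mathcal{F}$; therefore $\mu(\mathcal{F})=1$. The subtlety is exactly that the normalization factors in \autoref{eq:appendix_constraint_function} are what make the ``on-average'' polarization constraint coincide, on product states of normalized vectors, with the per-term local constraints defining $\mathcal{F}$.

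Finally I would conclude by a compactness argument. The set $\mathcal{F}=\big(f_A^{-1}(0)\cap K_A\big)\times\big(f_B^{-1}(0)\cap K_B\big)$ is a closed subset of the compact set $K_A\times K_B$, hence compact, so $S\coloneqq\{x_A\otimes x_B:(x_A,x_B)\in\mathcal{F}\}$ is a compact subset of the finite-dimensional space $V_A\otimes V_B$ and $\operatorname{conv}(S)$ is compact. The push-forward of $\mu$ along $(x_A,x_B)\mapsto x_A\otimes x_B$ is a Borel probability measure supported in $S$ whose barycenter is $y_1$, so $y_1\in\operatorname{conv}(S)$; Carathéodory's theorem in $V_A\otimes V_B$ then produces a finite index set $I$ with $|I|\le\dim(V_A\otimes V_B)+1$, weights $p_i>0$ summing to $1$, and pairs $(y_A^i,y_B^i)\in\mathcal{F}$ with $y_1=\sum_{i\in I}p_i\,y_A^i\otimes y_B^i$, which is the assertion. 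Everything outside the support-identification step is thus a routine combination of the de Finetti lemma of \cite{plavala2025polarization} with finite-dimensional convexity, and it is only there that the precise form \autoref{eq:appendix_constraint_function} of the lifted local-constraint map has to be used.
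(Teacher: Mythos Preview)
Your proposal is correct and follows essentially the same route as the paper: obtain the bicompatible sequence and its integral representation via \cite[Lem.~2]{plavala2025polarization}, push the polarization constraint through the integral to get $\hat{F}_{\hookrightarrow}(x_A\otimes x_B)=0$ $\mu$-a.e., use the specific form \autoref{eq:appendix_constraint_function} on normalized states to identify this with $\mathcal{F}$, and conclude with Carathéodory. Your write-up is in fact more detailed than the paper's in two places---you spell out why the normalization scalars $\gamma_A(x_A),\gamma_B(x_B)$ drop out on $K_A\times K_B$, and you make the compactness argument behind the Carathéodory step explicit---but these are elaborations of the same argument rather than a different approach.
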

\begin{proof}
    Because $\{y_n\}_{n\in \mathbb{N}}$ is bicompatible, by \cite[Lem. 2]{plavala2025polarization} there exists a Borel probability measure $\mu \in \mathcal{M}_1(K_A\times K_B)$ such that 
    \begin{align}
        y_n = \int_{K_A \times K_B} x_A^{\otimes n} \otimes x_B^{\otimes n} d\mu(x_A,x_B).
    \end{align}
    Furthermore, using in the polarization trick \cite[sec. II]{plavala2025polarization} with $V = U_A \oplus U_B$ and the existence of a map  
    \begin{align}
        \Pi:V\otimes V \to W
    \end{align}
    such that $\Pi(a\otimes a + b\otimes b) = 0$ implies $a= b = 0$, then we have for the specific map $\hat{F}_{\hookrightarrow}$ 
    \begin{equation}
        \begin{aligned}
            \big(\hat{F}_{\hookrightarrow} \otimes \hat{F}_{\hookrightarrow}\big) (y_2) &= \big(\hat{F}_{\hookrightarrow} \otimes \hat{F}_{\hookrightarrow}\big) \bigg(\int_{K_A \times K_B} x_A^{\otimes 2} \otimes x_B^{\otimes 2} d\mu(x_A,x_B)\bigg) \\
            &= \int_{K_A \times K_B} \big(\hat{F}_{\hookrightarrow} \otimes \hat{F}_{\hookrightarrow}\big)  \big(x_A^{\otimes 2} \otimes x_B^{\otimes 2} \big)d\mu(x_A,x_B) \\
            &= \int_{K_A \times K_B} \hat{F}_{\hookrightarrow}\big(x_A \otimes x_B\big) \otimes \hat{F}_{\hookrightarrow}\big(x_A \otimes x_B\big)d\mu(x_A,x_B).
        \end{aligned}
    \end{equation}
    The polarization constraint in \cite[eq. (28)]{plavala2025polarization} yields 
    \begin{align}
        \big(\hat{F}_{\hookrightarrow} \otimes \hat{F}_{\hookrightarrow}\big) (y_2) = 0.
    \end{align}
    Applying $\Pi$ yields 
    \begin{equation}
        \begin{aligned}
            0 &= \Pi\bigg(\big(\hat{F}_{\hookrightarrow} \otimes \hat{F}_{\hookrightarrow}\big) (y_2) \bigg)\\
            &= \int_{K_A \times K_B}  \Pi\bigg( \hat{F}_{\hookrightarrow}\big(x_A \otimes x_B\big) \otimes \hat{F}_{\hookrightarrow}\big(x_A \otimes x_B\big)\bigg)d\mu(x_A,x_B).
        \end{aligned}
    \end{equation}
    Following \cite[proof of Thm. 3]{plavala2025polarization} around eq. (17), we can conclude that already 
    \begin{align}\label{eq:proof_polarization_trick}
        \Pi\bigg( \hat{F}_{\hookrightarrow}\big(x_A \otimes x_B\big) \otimes \hat{F}_{\hookrightarrow}\big(x_A \otimes x_B\big)\bigg) = 0 \quad \mu-\operatorname{a.e.}.
    \end{align}
    Applying the polarization trick, we get from \autoref{eq:proof_polarization_trick} that 
    \begin{align}
        \hat{F}_{\hookrightarrow}\big(x_A \otimes x_B\big) = 0 \quad \mu-\operatorname{a.e.}
    \end{align}
    and as $\hat{F}_{\hookrightarrow}(x_A\otimes x_B) = f_A(x_A) \oplus f_B(x_B)$ for elements $x_A \in K_A$ and $x_B \in K_B$, we conclude $(x_A,x_B) \in \mathcal{F}$ for $\mu-$a.e. $(x_A,x_B)$, i.e. $\mu(\mathcal{F}) = 1$. Applying now Carathéodorys theorem yields that there exists a finite decomposition with
    \begin{align}
        y_1 = \sum_i p_i y_A^i \otimes y_B^i
    \end{align}
    and $(y_A^i,y_B^i) \in \mathcal{F}$. 
\end{proof}
The \autoref{prop:polarization_forall_i} shows explicitly that the same constraint \autoref{eq:appendix_constraint_function} in the polarization hierarchy solves the problem of finding a decomposition of an optimal state such that each element of the decomposition satisfies the constraints (as written in a general form in \cite[sec.  II]{plavala2025polarization}).  

\section{Games in weakly self-dual GPTs}\label{sec:games_in_weakly_self_dual_gpts}

Assume a weakly self-dual GPT $(A(K_B)^{\star}, A(K_B)^{\star +}, 1_{K_B})$ i.e.\ there exists an order/cone isomorphism 
\begin{align}
    J\,:\, A(K_B)^{\star}\rightarrow A(K_B)
\end{align}
such that $J\lrbracket{A(K_B)^{\star +}}= A(K_B)^+$. For $\chi_{V(K_B)}\in A(K_B)^+\tmax A(K_B)^{\star +}$ (cf.\ \cite[Sec.\ 3.2]{achenbach2025factorizationmultimetersunifiedview}, \cite[Lem.\ A.1]{Jenov2018}) as defined in \autoref{eqn:general_choi} consider
\begin{align}
    \Phi_{B,J} \coloneqq \lrbracket{J^{-1}\otimes \operatorname{id}_{A(K_B)^{\star}}}\lrbracket{\chi_{A(K_B)^{\star}}} \in A(K_B)^{\star}\otimes A(K_B)^{\star}.
\end{align}
Since $J$ is a cone isomorphism and the choi tensor for the identity pairing lies in a maximal tensor product cone, we have $\Phi_{B,J}\in A(K_B)^+\tmax A(K_B)^{\star +}$. With 
\begin{align}
    \lrbracket{\operatorname{id}_{A(K_B)^{\star}}\otimes 1_{K_B}}\lrbracket{\chi_{A(K_B)^{\star}}}=1_{K_B}\in A(K_B)^+
\end{align} we get
\begin{align}
\begin{split}
     \lrbracket{\operatorname{id}_{A(K_B)^{\star}}\otimes 1_{K_B}}\lrbracket{\Phi_{B,J}}&=\lrbracket{J^{-1}\otimes 1_{K_B}}\lrbracket{\chi_{A(K_B)^{\star}}}\\
     &=J^{-1}\lrbracket{1_{K_B}}\\
     &=:\tau_{B,J}\in A(K_B)^{\star +}.
\end{split}
\end{align}
Since $1_{K_B}$ is an order unit, it is an element of $\operatorname{int}\lrbracket{A(K_B)^+}$ (cf.\ \autoref{prop:interior_points_of_proper_cones_are_order_units}). Hence, with $J$ a cone isomorphism, $\tau_{B,J}\in \operatorname{int}\lrbracket{A(K_B)^+}$ follows. Define the scalar
\begin{align}
    c_{B,J} \coloneqq 1_{K_B}\lrbracket{\tau_{B,J}} 1_{K_B}=\lrbracket{J^{-1}\lrbracket{1_{K_B}}}>0,
\end{align}
the normalized interior state
\begin{align}
    \Bar{\sigma}_{B} \coloneqq \frac{\tau_{B,J}}{c_{B,J}} \in \operatorname{relint}\lrbracket{K_B},
\end{align}
and set 
\begin{align}
    \Hat{\Phi}_B \coloneqq \frac{\Phi_{B,J}}{c_{B,J}},
\end{align}
such that
\begin{align}
    \lrbracket{1_{K_B}\otimes 1_{K_B}}\lrbracket{\Hat{\Phi}_B}=1,\quad  \lrbracket{1_{K_B}\otimes \operatorname{id}_{K_B}}\lrbracket{\Hat{\Phi}_B}=\Bar{\sigma}_{B}.
\end{align}
For any multimeter $T\,:\; A(K_B)^{\star +}\rightarrow CS^{+}_{\lrvert{\Bcal},  \lrvert{\Ycal}}$ we define its Choi tensor 
\begin{align}
    \omega_T \coloneqq \lrbracket{\operatorname{id}\otimes T}\lrbracket{\Hat{\Phi}_B} \in K_B\tmax CS^{1}_{\lrvert{\Bcal},  \lrvert{\Ycal}}.
\end{align}
Then,
\begin{align}
    \lrbracket{\operatorname{id}_{K_B}\otimes 1_{CS^1_{\lrvert{\Bcal},  \lrvert{\Ycal}}}}\lrbracket{\omega_T}=\Bar{\sigma}_{B}.
\end{align}
Thus, in weakly self-dual GPTs, we obtain the following equivalent optimization problem to \autoref{eqn:game_as_polynomial_optimization_1}
\begin{align}
\begin{split}
    \gamma^*(G)=\sup_{\xi_B,\, \omega_B} \quad & c_{B,J}\, G_{\Acal\Bcal\Xcal\Ycal}\left[\lrbracket{\lrbracket{\lrbracket{J\otimes \operatorname{id}}(\omega_B)}\otimes \xi_B}\lrbracket{\chi_{A(K_B)^{\star}}}\right]\\
    \operatorname{s.t. } \quad & \xi_B \in CS^1_{\lrvert{\Acal},  \lrvert{\Xcal}}\tmax K_B,\\
    & \omega_B \in K_B\tmax CS^1_{\lrvert{\Bcal},  \lrvert{\Ycal}},\\
    & \lrbracket{\operatorname{id}_{K_B}\tmax 1_{CS^1_{\lrvert{\Bcal},  \lrvert{\Ycal}}}}\lrbracket{\omega_B}=\Bar{\sigma}_{B},
\end{split}
\end{align}
or, equivalently,
\begin{align}
\begin{split}
    \gamma^*(G)=\sup_{\xi_B,\, \omega_B} \quad & c_{B,J}\, G_{\Acal\Bcal\Xcal\Ycal}\left[\lrbracket{\omega_B\otimes \xi_B}\lrbracket{\hat{\Phi}_B}\right]\\
      \operatorname{s.t. } \quad & \xi_B \in CS^1_{\lrvert{\Acal},  \lrvert{\Xcal}}\tmax K_B,\\
    & \omega_B \in K_B\tmax CS^1_{\lrvert{\Bcal},  \lrvert{\Ycal}},\\
    & \lrbracket{\operatorname{id}_{K_B}\tmax 1_{CS^1_{\lrvert{\Bcal},  \lrvert{\Ycal}}}}\lrbracket{\omega_B}=\Bar{\sigma}_{B}.
\end{split}
\end{align}

\end{document}